\documentclass[reqno,11pt]{amsart}
\usepackage{amsmath,amssymb,graphicx,amsthm,fullpage,xcolor,mathrsfs}
\usepackage{stmaryrd}
\usepackage[utf8]{inputenc}
\usepackage{xifthen,xcolor,tikz,setspace}
\usetikzlibrary{decorations.pathmorphing,patterns,shapes,calc}

\usepackage[normalem]{ulem}

\usepackage{prettyref}

\usepackage{enumerate}

\usepackage[colorlinks=true]{hyperref}
\colorlet{DarkGreen}{green!50!black}
\colorlet{DarkGray}{gray!60!black}

\DeclareMathAlphabet{\mathcal}{OMS}{cmsy}{m}{n}

\newtheorem{theorem}{Theorem}[section]
\newtheorem*{theorem*}{Theorem}
\newtheorem{lemma}[theorem]{Lemma}
\newtheorem*{lemma*}{Lemma}

\newtheorem{observation}[theorem]{Observation}

\newtheorem{corollary}[theorem]{Corollary}

\theoremstyle{definition}{

\newtheorem{definition}[theorem]{Definition}
\newtheorem*{definition*}{Definition}

\newtheorem{question}{Question}
\newtheorem{remark}[theorem]{Remark}
\newtheorem{step}{Step}
}

\numberwithin{equation}{section}


\newcommand{\cA}{\mathcal{A}}

\newcommand{\cM}{\mathscr{M}}
\newcommand{\cN}{\mathcal{N}}
\newcommand{\cE}{\mathcal{E}}
\newcommand{\cF}{\mathcal{F}}
\newcommand{\cG}{\mathcal{G}}

\newcommand{\sL}{\mathscr{L}}

\newcommand{\cS}{\mathcal{S}}
\newcommand{\bS}{\mathbb{S}}
\newcommand{\sU}{\mathscr{U}}

\newcommand{\R}{\mathbb{R}}

\newcommand{\Z}{\mathbb{Z}}

\newcommand{\prob}{\mathbb{P}}
\newcommand{\E}{\mathbb{E}}
\newcommand{\bE}{\mathbf{E}}

\newcommand{\eps}{\epsilon}

\newcommand{\eqdist}{\stackrel{(d)}{=}}
\newcommand{\convdist}{\stackrel{(d)}{\rightarrow}}
\newcommand{\tensor}{\otimes}

\newcommand{\abs}[1]{\left\lvert#1\right\rvert}
\newcommand{\norm}[1]{\left\lVert#1\right\rVert}

\newcommand{\cov}{\text{Cov}}
\newcommand{\tr}{\operatorname{tr}}
\newcommand{\Ric}{\operatorname{Ric}}
\newcommand{\g}[1]{\left\langle{#1}\right\rangle}


\newcommand{\bn}{\mathbf{n}}
\newcommand{\condI}{\textbf{Condition I}}
\newcommand{\condB}{\textbf{Condition B}}

\renewcommand{\limsup}{\varlimsup}
\newrefformat{prop}{Proposition~\ref{#1}}
\newrefformat{fig}{Fig.~\ref{#1}}
\newrefformat{cor}{Corollary~\ref{#1}}
\begin{document}
\title{Bounding flows for spherical spin glass dynamics}
\author{G\'erard Ben Arous}
\author{Reza Gheissari}
\author{Aukosh Jagannath}

\address[G\'erard Ben Arous]{Courant Institute, New York University and NYU Shanghai}
\email{benarous@cims.nyu.edu}

\address[Reza Gheissari]{Courant Institute, New York University and University of California, Berkeley}
\email{gheissari@berkeley.edu}

\address[Aukosh Jagannath]{Harvard University and University of Waterloo}
\email{a.jagannath@uwaterloo.ca}

\begin{abstract}
We introduce a new approach to studying
spherical spin glass dynamics based on differential inequalities for one-time observables.
Using this approach, we obtain an approximate 
phase diagram for the evolution of the energy $H$ and its gradient under Langevin dynamics for spherical $p$-spin models.
We then derive several consequences of this phase diagram.
For example, at any temperature, uniformly over all starting points, 
the process must reach and remain in an absorbing region of large negative values of $H$
and large (in norm) gradients in order 1 time. 
Furthermore, if the process starts in a neighborhood of a critical point  of $H$ with negative energy,
then both the gradient and energy must \emph{increase} macroscopically under this evolution, 
even if this critical point is a saddle with index of order $N$. {As a key technical tool, we estimate 
Sobolev norms of spin glass Hamiltonians, which are of independent interest.}
\end{abstract}

\maketitle
\vspace{-.6cm}

\section{introduction}\label{sec:intro}
We introduce here a simple new way to study the dynamics of spherical spin glasses that is relevant on short time scales.
We apply this method to the Langevin dynamics for the spherical
$p$-spin model. In a companion work, 
we use this approach to study Langevin dynamics for Tensor PCA and the related signal recovery problem \cite{BAGJ18b}. 

Let  $\cS^N =\mathbb{S}^{N-1}(\sqrt{N})=\{x\in\R^N:\sum x_i^2 = N\},$ denote the $(N-1)$-sphere in dimension $N$ of radius 
 $\sqrt{N}$, and consider the $p$-spin Hamiltonian 
 \begin{equation}\label{eq:p-spin-defn}
 H(x)=\frac{1}{N^{(p-1)/2}}\sum_{i_1,\ldots,i_p=1}^N J_{i_1,\ldots,i_p} x_{i_1}\cdots x_{i_p}\,,
 \end{equation}
where the coefficients, $J_{i_1,\ldots,i_p}$, are i.i.d.\ standard Gaussian random variables. 
Observe that this function is typically of order $\sqrt{N}$, while its extreme values are of order $N$.
Recall that these models are prototypical examples of complex energy landscapes: they have exponentially many 
critical points of every index \cite{ABC13,ABA13,Sub15}. 
We study here the Langevin dynamics corresponding to this Hamiltonian, namely the Markov process $X_{t}$ which is the solution to the stochastic differential equation
\begin{equation}\label{eq:Langevin-p-spin-defn}
\begin{cases}
dX_{t}=\sqrt{2}dB_{t}- \beta\nabla H(X_t)dt\\
X_{0}=x_{0}
\end{cases}
\,,
\end{equation}
where $B_{t}$ is spherical Brownian motion, $\nabla H$ is
the spherical gradient of $H$, and $\beta>0$ is the inverse temperature.
This process is reversible and ergodic with stationary measure given by the \emph{Gibbs measure}, $d\pi_{\beta,N}(x)\propto {e^{-\beta H(x)}}dx$,
where $dx$ is the normalized volume measure on $\cS^N$.

The dynamics of mean-field models of spin glasses has received a tremendous amount of attention in both the physics and mathematics literatures. This history is too rich to be summarized here and we refer the reader to the general surveys \cite{BCKM98,BerBir11,Cug03} in the physics literature and \cite{Gui07,BA02} in the mathematics literature.
One regime of interest in the study of spin glass dynamics are
 exponential timescales at low temperatures, as this 
is the timescale to equilibrium \cite{BAJag17,GJ16}. For 
an overview of related work on spin-glass dynamics on such ``activated" timescales, see \prettyref{sec:history} below.

In this paper, we are interested in the dynamics on much shorter timescales, namely timescales that are order $1$ in $N$. 
The classical approach to studying Langevin dynamics of spherical spin glasses on these timescales is via the ``Cugliandolo--Kurchan equations''.  These govern the evolution of certain natural two-time observables
 via a system of integro-differential equations  \cite{crisanti1993sphericalp, CugKur93, BADG06,BADG01,DGM07}. 
At this time, however, it seems challenging to use these techniques to answer the following kinds of natural questions.

\begin{question}[Going down quickly]
Does the dynamics reach and remain at a macroscopic fraction
of the global minimum energy, i.e., energies of order $N$, in short time? 
\end{question}
\begin{question}[Visiting critical points]
Does the dynamics visit and get slowed by critical points?
\end{question}
\begin{question}[Escaping critical points]
When started in a neighborhood of a critical point, can the dynamics escape in finite time, and,
if so, in which way?
\end{question}
\begin{question}[Varying initial data]
How do the answers to these questions change as we vary the initial energy and  gradient?
\end{question}

Our goal in this paper is to provide an elementary approach to 
obtain precise answers to these  questions. 
As an application of our approach, we obtain the 
following answers to the above. 
\begin{enumerate}
\item There is a constant $T_0$ such that for all initial data, the energy must
reach the extreme, order $N$, scale by time $T_0$, and remain there.
Moreover, we provide explicit bounds on the fraction of the ground state reached depending on $p$ and $\beta$.
\item This absorbing region is a region of macrosopically large gradients (norm of order $\sqrt{N}$). In particular, 
the dynamics does not come close to critical points in order 1 times. 
\item When started from any critical point of $H$, the dynamics must increase instantaneously in gradient. When that critical point has negative energy, the dynamics also \emph{increases} in energy at the order $N$ scale.  Perhaps surprisingly, 
this happens even when the critical point has diverging index. Put simply, 
the dynamics must \emph{climb} high dimensional saddles of negative energy for an order 1 time.
\item One can obtain sharper results for any fixed energy and norm of gradient 
of the initial data.
\end{enumerate}

The core of our approach is what we call \emph{bounding flows}. 
We analyze the evolution  of
\begin{equation}\label{eq:U-def}
U_N(t)= (u_N(t),v_N(t))=\bigg(-\frac{H(X_{t})}{N},\frac{\abs{\nabla H(X_t)}^{2}}{N}\bigg),
\end{equation}
from any initial point,
for any finite $T$. We do this by studying all possible weak limit points of the laws of $U_N$ in 
the space of probability measures, $\cM_1(C([0,T])^2)$ .
To this end, we develop an elementary, abstract approach to  show that 
these limit points exist and are $C^1$, by reducing this tightness,
to estimating a Sobolev norm of $H$, which we call the $\cG$-norm {(see Definition~\ref{def:g-norm})}.
This norm is the natural norm in which to study high dimensional problems of this type
and {plays an important role in the accompanying~\cite{BAGJ18b}}. We estimate it using tools from Gaussian comparison theory and differential geometry in Section~\ref{sec:regularity}.
We then show that these observables form what we call a \emph{quasi-autonomous} family. 
Roughly, this is a system of observables, which is closed but for an additional set of ``auxiliary observables"
which themselves have limits. (For a precise definition see \prettyref{sec:quasiautonomy}.) 
In the case of the variables $u_N,v_N$, there is a single auxiliary observable which is related to the alignment of the gradient
of $H$
with the eigenvectors of its Eucildean Hessian along the flow.
We then {uniformly bound the auxiliary observable by }``truncating" the Euclidean Hessian, yielding a pair of  differential inequalities  for $\dot U$ only
in terms of the pair $(u,v)$. This allows us to confine our dynamics 
for any choice of initial energy and gradient by the flowlines for the corresponding autonomous systems.
In this paper, our truncation is elementary and one can imagine continuing this program
by treating the Hessian as a part of our ``quasi-autonomous" family. This would yield new auxilliary 
variables and thereby sharpen the results. 

\subsection{Bounding flows}\label{sec:b-flows-intro}
\begin{figure}[t]
\begin{tikzpicture}
\node at (0,0) {\includegraphics[width=0.7\textwidth]{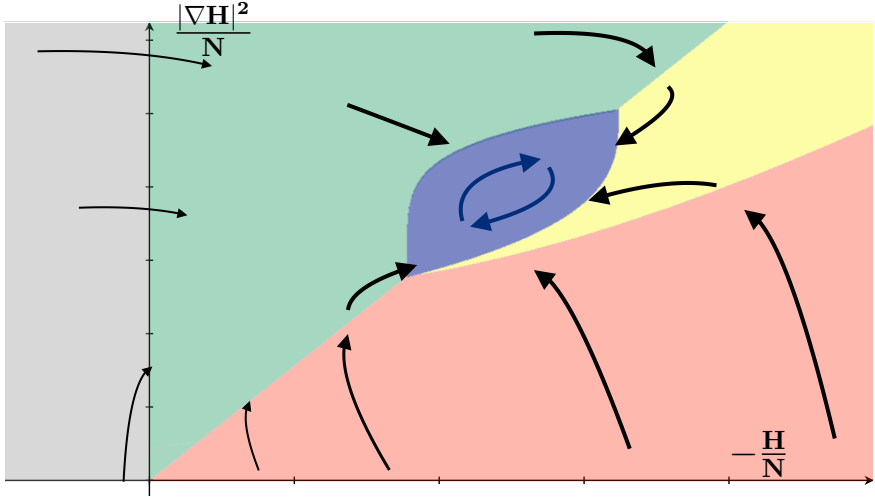}};
\node [font=\Large] at (4.25,-2.65) {$\mathbf{-\frac HN}$};
\node [font=\Large] at (-3,3) {$\mathbf{\frac{|\nabla H|^2}N}$};
\end{tikzpicture}
\vspace{-.1cm}
\caption{A schematic of our approximate phase diagram for $U_N(t)$. Arrows indicate from and to what regions $U_N(t)$ can travel; in particular, the dark blue set is an absorbing set which $U_N(t)$ reaches in finite time, uniformly over starting positions. See Theorem~\ref{thm:full-phase-portrait} for a formal statement of the above depiction. }\label{fig:approx-pd} 
\end{figure}

We begin by proving the following differential inequalities for the evolution of $U$ in Section~\ref{sec:diff-ineq} (see Section~\ref{subsec:derivation-diff-ineq} for a sketch of the derivation). In the following we equip $C([0,T])^k$ with the
norm topology.

\begin{theorem}\label{thm:bounding-flows-thm}
For every $p,\beta,T$ and every sequence of initial points $x_N\in\mathcal S^N$, the laws of $U_N(t)$ from \eqref{eq:U-def} are tight in  
$\cM_1(C([0,T])^2)$. Moreover, every limit $U(t)=(u(t),v(t))$ is almost surely continuously differentiable and satisfies 
\begin{equation}\label{eq:diff-ineq-main}
\dot u = \cF_1 (u,v) \qquad \mbox{and}\qquad  \cF_2^L(u,v)\leq \dot v\leq \cF_2^U(u,v)\,,
\end{equation}
 where, for some $\Lambda_p<\infty$ (see Remark~\ref{rem:lambda-u-c} for explicit bounds),
\begin{equation}\label{eq:cf-def}
\begin{aligned}
\cF_1(u,v)& = -pu+\beta v\,, \\
\cF_2^L (u,v)& = 2p(p-1)- 2(p-1)v + 2pu(pu-\beta v)- 2\beta \Lambda_p v\,, \\
\cF_2^U (u,v)& = 2p(p-1)- 2(p-1)v + 2pu(pu-\beta v)+2\beta \Lambda_p v\,. 
\end{aligned}
\end{equation}
\end{theorem}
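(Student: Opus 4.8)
The plan is to compute $\dot u_N$ and $\dot v_N$ via Itô's formula for spherical diffusions, identify which terms converge and which need to be bounded, and pass to the limit. Write $L = \Delta_{\cS^N} + $ (drift) for the generator. Applying Itô's formula to $u_N = -H(X_t)/N$, the martingale part has quadratic variation $\frac{2}{N^2}\abs{\nabla H(X_t)}^2\,dt = \frac{2}{N} v_N(t)\,dt$, which vanishes in the limit once we know $v_N$ stays bounded; the bounded-variation part gives $\dot u_N = -\frac{1}{N}\big(\Delta H(X_t) - \beta\abs{\nabla H(X_t)}^2\big)$. Here one uses the Euler-type identity for the $p$-spin Hamiltonian: since $H$ is a degree-$p$ polynomial restricted to the sphere, $\Delta_{\cS^N} H = -p(\cdot) + $ lower order, so $\frac{1}{N}\Delta H(X_t) = p\,u_N(t) + o(1)$ deterministically (this is an exact algebraic identity up to $O(1/N)$ corrections coming from the difference between the Euclidean and spherical Laplacians and the radial derivative). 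That yields $\dot u_N = p u_N - \beta v_N + o(1) \;=\; -\cF_1(u_N,v_N)\cdot(-1) $; matching signs gives $\dot u = \cF_1(u,v) = -pu + \beta v$ in the limit. The key point is that the right-hand side of the $u$-equation is \emph{closed} in $(u,v)$ with no auxiliary observable, so no inequality is needed there.

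For $\dot v_N$ with $v_N = \abs{\nabla H(X_t)}^2/N$, applying Itô's formula produces: (i) a martingale term with quadratic variation of order $\frac{1}{N}\cdot(\text{bounded in terms of } u_N,v_N, \text{Hessian norms})\,dt$, which again vanishes; (ii) a term $-\frac{2\beta}{N}\langle \nabla H, (\nabla^2 H)\nabla H\rangle$ coming from the drift acting on $\abs{\nabla H}^2$; (iii) a Laplacian term $\frac{1}{N}\Delta\abs{\nabla H}^2 = \frac{2}{N}\big(\abs{\nabla^2 H}_{HS}^2 + \langle\nabla H,\nabla\Delta H\rangle\big) + (\text{curvature corrections})$. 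Now one uses the $p$-spin identities again: $\nabla\Delta H$ and the trace $\tr(\nabla^2 H)$ are governed by $H$ and $\nabla H$ themselves (degree considerations), and the full-space Hessian concentration — this is where the $\cG$-norm estimates of Section~\ref{sec:regularity} enter — controls $\frac{1}{N}\abs{\nabla^2 H}_{HS}^2$ and the operator norm $\norm{\nabla^2 H}_{op}$ uniformly in a neighborhood of the sphere, up to a constant $\Lambda_p$. Assembling these, the ``clean'' part of $\dot v_N$ is $2p(p-1) - 2(p-1)v_N + 2pu_N(pu_N - \beta v_N) + o(1)$, and the only genuinely uncontrolled piece is the cross term $\frac{2\beta}{N}\langle\nabla H,(\nabla^2 H)\nabla H\rangle$ — the \emph{auxiliary observable}, measuring the alignment of $\nabla H$ with the Hessian eigenvectors. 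Bounding it by $\beta\norm{\nabla^2 H}_{op}\cdot v_N \le \beta\Lambda_p v_N$ in absolute value produces exactly the two-sided bracket $\cF_2^L \le \dot v \le \cF_2^U$.

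To make this rigorous I would proceed in the following order. \textbf{Step 1 (tightness).} Establish an a priori bound: show that with probability tending to one, $u_N(t)$ and $v_N(t)$ remain bounded by a constant on $[0,T]$. This follows by a Gronwall/stopping-time argument feeding the differential inequalities back into themselves (the autonomous vector fields $\cF_1, \cF_2^{U}$ have at most quadratic growth, but on the relevant region one gets an a priori confinement), combined with the uniform $\cG$-norm control of $H$ on a sphere-neighborhood so that $\Lambda_p$ is a genuine constant. Given this, the drift terms in $du_N, dv_N$ are uniformly bounded and the martingale parts have quadratic variation $O(1/N)$; the Kolmogorov/Aldous tightness criterion then gives tightness in $\cM_1(C([0,T])^2)$, and moreover shows any weak limit is Lipschitz, hence absolutely continuous, in $t$. \textbf{Step 2 (identifying the limit ODE/differential inequality).} On a probability-one event along a convergent subsequence, pass to the limit in the integrated (Duhamel) forms $u_N(t) - u_N(0) = \int_0^t(\text{drift})\,ds + (\text{mart})$, using uniform integrability from Step 1 and the a.s.\ convergence of the integrands modulo $o(1)$ error terms controlled by the $\cG$-norm; the martingale vanishes in $L^2$. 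This yields the integral identity $u(t) - u(0) = \int_0^t \cF_1(u,v)\,ds$ and the two integral inequalities for $v$, from which $C^1$-ness of $u$ and the pointwise relations \eqref{eq:diff-ineq-main} follow by differentiating (one gets $\dot v$ exists a.e.\ and satisfies the bracket; since the bracket endpoints are continuous functions of $(u,v)$ and $v$ is Lipschitz, $v$ is in fact $C^1$ wherever the bracket has zero width, and more care — e.g.\ a Filippov/differential-inclusion viewpoint — handles the rest, but for the stated conclusion the a.e.\ inequality together with continuity suffices after noting $v$ is $C^1$ because $\dot v$ equals a continuous function of $(u,v)$ plus a term of absolute value $\le \beta\Lambda_p v$ that is itself a limit of continuous functions).

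\textbf{Main obstacle.} The hard part is \textbf{Step 1} together with the input from Section~\ref{sec:regularity}: one needs that $H$, $\nabla H$, $\nabla^2 H$ (and $\nabla\Delta H$) are controlled \emph{uniformly over the whole sphere}, not just at a typical point, so that the error terms in Itô's formula are genuinely $o(1)$ along the random trajectory $X_t$ and the constant $\Lambda_p$ is deterministic and finite. This is exactly what the $\cG$-norm (Sobolev-type) estimates are designed to provide — controlling $\sup_{x}$ of derivatives of $H$ via Gaussian comparison (Slepian/Sudakov--Fernique, Borell--TIS) and a chaining/covering argument over $\cS^N$, with dimension-dependent constants that remain bounded as $N\to\infty$ after the $1/N$ normalization. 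Once that uniform regularity is in hand, the differential-geometric bookkeeping (spherical vs.\ Euclidean Laplacian, the degree-$p$ Euler identities $\langle x,\nabla H\rangle = pH$, $\tr\nabla^2 H$ relations) is routine algebra, and the a priori boundedness and tightness follow by a standard stopping-time + Gronwall argument.
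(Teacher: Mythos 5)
Your proposal is correct and follows essentially the same route as the paper: It\^o's formula applied to the one-time observables $u_N,v_N$, uniform-over-the-sphere $\cG$-norm/Gaussian-comparison estimates to make the error terms $o(1)$ and the constant $\Lambda_p$ deterministic, identification of the Hessian--gradient alignment term as the single auxiliary observable bounded in operator norm, and Kolmogorov-type tightness with $O(1/\sqrt N)$ martingales followed by passage to the limit in integral form. One bookkeeping caveat: to obtain the stated $\cF_2^{L/U}$ (which retain the $2pu(pu-\beta v)$ term), the operator-norm bound must be applied to the tangential restriction of the Euclidean Hessian $G=\nabla^2 H+p\tfrac{H}{N}\,Id$ after splitting $\nabla^2H(\nabla H,\nabla H)=G(\nabla H,\nabla H)-p\tfrac HN|\nabla H|^2$, not to the full covariant Hessian as written (which would either drop or double-count the $-2p\beta uv$ contribution), and the bracket then carries $2\beta\Lambda_p v$ rather than $\beta\Lambda_p v$.
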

%
More broadly, this enables us to bound the trajectory of $U_N(t)$ as we vary the initial data. 
The differential inequalities above translate to precise inequalities for the integral curves of the corresponding dynamical systems. Indeed,
for any initial data $U(0)$, the integral curves of $U(t)$ are almost surely confined to the region in between the 
corresponding integral curves of the lower and upper bounding differential systems of Theorem~\ref{thm:bounding-flows-thm} 
until they hit the stationary line for the energy
(this is stated precisely in \prettyref{cor:graph-cor}). 
To see this by way of example, the right frame of Fig.~\ref{fig:bounding-flow-lines} illustrates how the flows confine the dynamics 
started from a point that is chosen with respect to the uniform measure on $\cS^N$, as well as started from a near-critical point, to
narrow channels that reach the absorbing region.  In 
particular, the confining regions
become sharper near critical points and as well as when $\beta$ gets small. Using these arguments, we obtain the phase diagram depicted in Figure~\ref{fig:approx-pd} on order $1$ timescales by comparing any limiting $U(t)$ to the bounding flows. The various regions of Figure~\ref{fig:approx-pd} are explicitly defined in Section~\ref{subsec:phase-portrait-defs}, and the phase diagram indicated by the arrows in the figure is stated precisely in Theorem~\ref{thm:full-phase-portrait}.

\subsection{Answers to the four questions}\label{sec:p-spin-intro}
\begin{figure}[t]
\vspace{-.5cm}
\resizebox{16cm}{7cm}{
\begin{tikzpicture}
\centering
\node at (-3,0) {\includegraphics[width=0.44\textwidth]{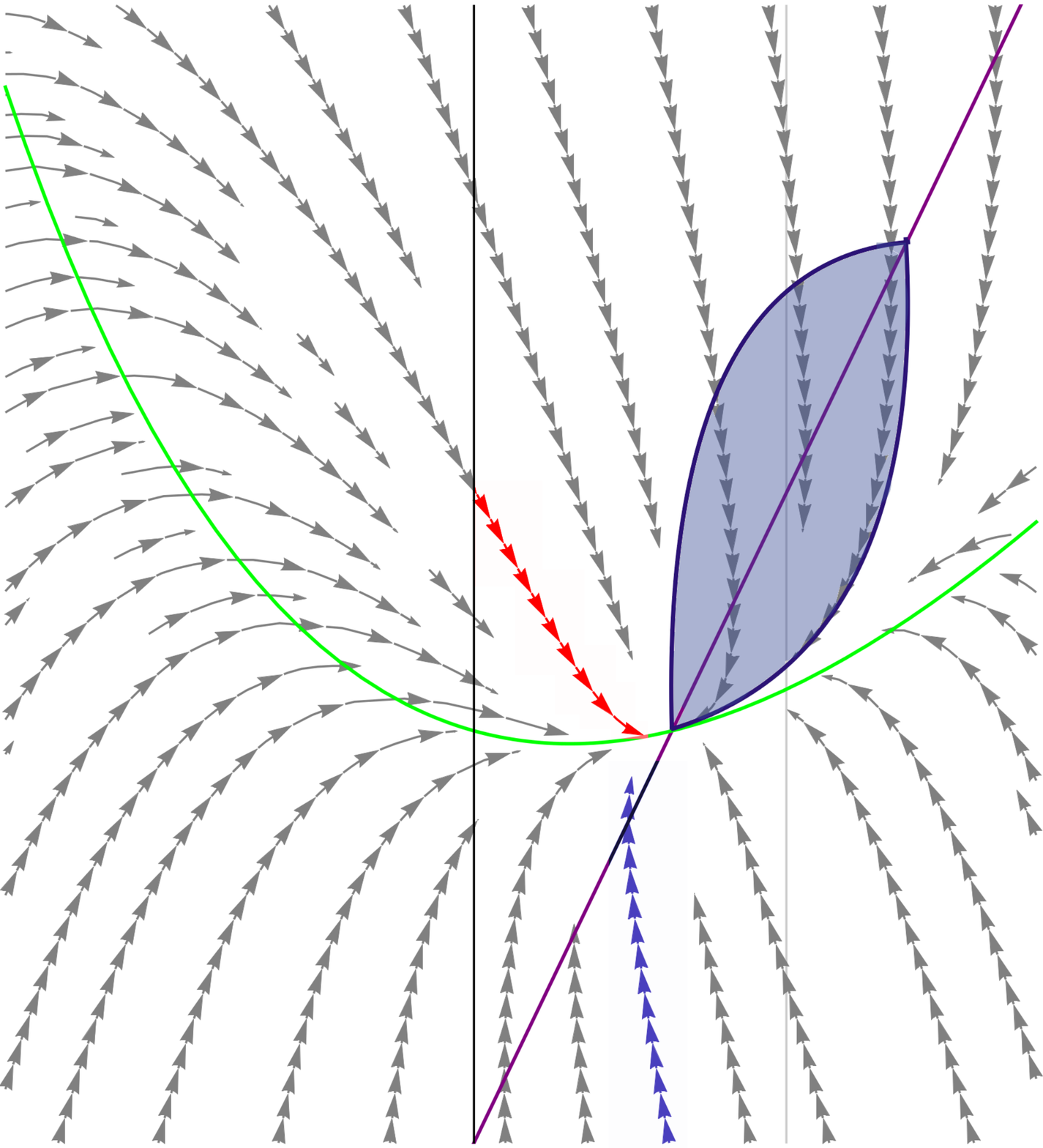}};
\node at (5,0) {\includegraphics[width=0.49\textwidth]{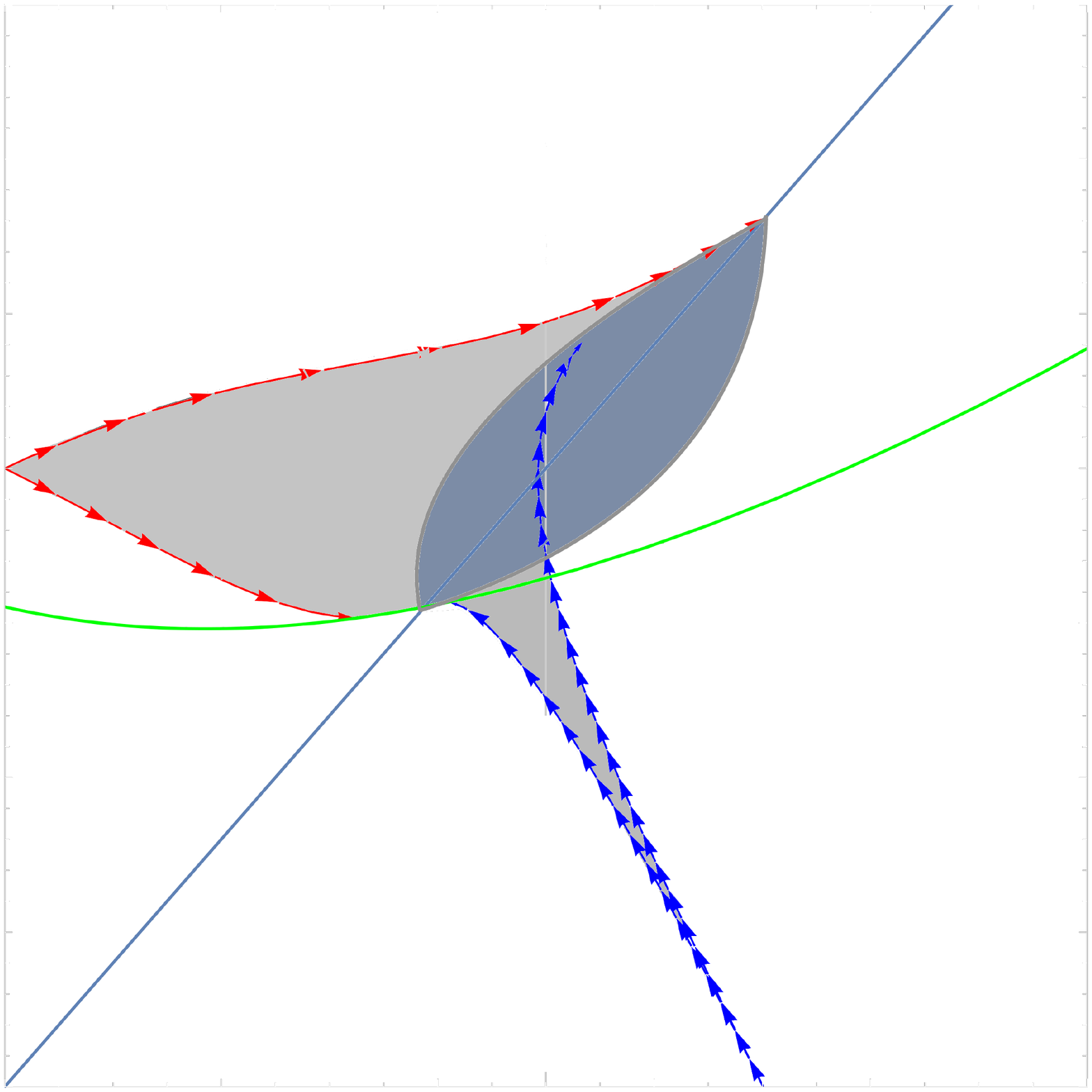}};
\draw [style=thick] (-6.75,-4)--(.75,-4);
\draw [style=thick] (1.07,-4)--(8.93,-4);
\node  at (.3,-3.7) {$\mathbf{-\frac HN}$};
\node  at (-3.8,3.7) {$\mathbf{\frac{|\nabla H|^2}N}$};
\node  at (8.6,-3.7) {$\mathbf{-\frac HN}$};
\node  at (1.5,3.7) {$\mathbf{\frac{|\nabla H|^2}N}$};
\end{tikzpicture}
}
\vspace{-.6cm}
\caption{Left: Example flowlines for the ``lower bounding flow" $\dot A_L(t)= (\mathcal F_1, \mathcal F_2^-)$. The red flowline indicates a start from a point chosen uniformly on $\mathcal S^N$, and the blue flowline is starting from a critical point of negative energy. The blue shaded region is the absorbing set which the process reaches in finite time. Right: $U$ started from a uniformly chosen point is confined to the shaded region between the red curves as it approaches the absorbing region.
Similarly, started from a critical point of negative energy,  $U$ is constrained to the shaded region between the blue curves.}
\label{fig:bounding-flow-lines}
\end{figure}
To understand the consequences of  Theorem~\ref{thm:bounding-flows-thm}, let us now state
more precisely the answers to the four questions from \prettyref{sec:intro}.
We present the full phase diagram from \prettyref{fig:approx-pd} in Section~\ref{sec:outline-and-ideas} and prove it in \prettyref{sec:phase-portrait}.

Throughout the paper, let $\mathbb P$ denote the law of $H$ and $Q_x$ denote the law of $X_t$ started from $X_0= x$. As an answer to Questions 1 and 2, we obtain the following.
\begin{theorem}[Going down quickly and avoiding critical points]\label{thm:performance-guarantee}
For every $\beta>0$ and $p>2$, there exists an explicit $u_c>0$ such that the following holds. For every $\epsilon>0$, there exists $T_0(\epsilon)>0$ such that $\prob$--almost surely,
\begin{equation}\label{eq:performance-guarantee}
\lim_{N\to\infty}\inf_{x\in\cS^N}Q_x\left(\left\{ 
\begin{array}{l} H(X_t)\leq -(u_c-\eps)N \\ \abs{\nabla H(X_t)}^2>(\frac{pu_c}{\beta}-\epsilon)N \end{array}
\text{ for all } t\in[T_0,T]\right\}\right) = 1\,,
\end{equation}
for every $T> T_0$.
\end{theorem}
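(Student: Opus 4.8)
The plan is to deduce this theorem from the differential inequalities of Theorem~\ref{thm:bounding-flows-thm} by an elementary phase-plane analysis of the bounding autonomous systems. First, I would fix a sequence of initial points $x_N \in \cS^N$ realizing (up to a subsequence) the infimum over $x \in \cS^N$ of $Q_x$ of the complement of the target event; by tightness, pass to a further subsequence along which the laws of $U_N$ converge to some limit law on $C([0,T])^2$, and it suffices to show that every such limit $U(t) = (u(t),v(t))$ is, almost surely, eventually trapped in the region $\{u \ge u_c - \eps,\ v > pu_c/\beta - \eps\}$ after a deterministic time $T_0(\eps)$ that does not depend on the realization of $H$ or on the starting sequence. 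Since the initial data $U_N(0)$ lives in the compact set $\{(u,v): v \ge 0\}$ (indeed $v_N(0)$ is a bounded observable on $\cS^N$ by the $\cG$-norm estimates of Section~\ref{sec:regularity}, uniformly in $N$), the possible limiting initial conditions $U(0)$ range over a fixed compact set $K$, and I need a time $T_0$ that works uniformly over $U(0) \in K$.

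The core step is the following statement about ODEs: for the lower bounding vector field $\dot A_L = (\cF_1, \cF_2^L)$ of \eqref{eq:cf-def}, the region
\[
\cR_\eps = \{(u,v): u \ge u_c - \eps,\ v > pu_c/\beta - \eps\}
\]
is forward invariant, and every integral curve starting in $K$ enters $\cR_\eps$ within a uniform time $T_0(\eps)$ and never leaves. I would establish this by identifying the relevant fixed points and nullclines: the energy nullcline $\{\cF_1 = 0\}$ is the line $v = pu/\beta$, and on it $\dot v \ge \cF_2^L(u, pu/\beta)$ reduces to a quadratic in $u$ with a positive root, which is the constant $u_c$ I would define (so $(u_c, pu_c/\beta)$ is the fixed point of the \emph{upper} comparison system, and the lower system's fixed point lies further into $\cR_\eps$); the explicit formula for $u_c$ in terms of $p, \beta, \Lambda_p$ drops out of the quadratic formula. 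One checks that $\cF_1 > 0$ below the line $v = pu/\beta$ and $\cF_1 < 0$ above it, that $\cF_2^L > 0$ for $v$ small, and that along the boundary of $\cR_\eps$ the vector field points inward — this is a finite check on each of the two boundary pieces. Forward invariance then follows, and the uniform entry time $T_0(\eps)$ follows from compactness of $K$ together with the fact that $\cF_1$ is bounded below by a positive constant on the compact region $K \setminus \cR_\eps$ lying below the energy nullcline, so $u$ increases at a definite rate until the curve crosses into $\cR_\eps$; a symmetric argument using positivity of $\cF_2^L$ handles curves that start with small $v$.

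Finally, I would invoke the comparison principle (stated as \prettyref{cor:graph-cor} in the excerpt): any limit $U(t)$ is confined, until it hits the energy stationary line, between the integral curves of the lower and upper bounding systems issued from $U(0)$; combined with the forward invariance of $\cR_\eps$ for the lower system and the analogous (easier) invariance for the upper system, this forces $U(t) \in \cR_\eps$ for all $t \in [T_0, T]$. Since this holds for every subsequential limit and every admissible starting sequence, a standard Portmanteau argument on the open set $\{U(t) \in \cR_\eps \ \forall t \in [T_0,T]\}$ gives $\liminf_N \inf_{x} Q_x(\cdots) \ge 1$, hence the limit equals $1$; the almost sure quantifier over $\mathbb{P}$ is free because $\Lambda_p$, $u_c$, and $T_0$ are deterministic. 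The main obstacle is the phase-plane bookkeeping in the second paragraph: verifying that $\cR_\eps$ is genuinely forward invariant for the \emph{lower} field (not just the averaged one) and that the $-2\beta\Lambda_p v$ term does not destroy the inward-pointing property near the $v$-boundary — this is where the explicit choice of $u_c$ and the smallness of the relevant constants must be used carefully, and it is the step most likely to require the hypothesis $p > 2$.
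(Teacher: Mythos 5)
Your high-level strategy (tightness, then confine every subsequential limit by a phase-plane analysis of the bounding systems, then Portmanteau) is indeed the paper's strategy, but the core step of your second paragraph has a genuine gap, compounded by two sign/bookkeeping errors that feed into it. First, the orientation of the energy nullcline is backwards: since $\cF_1(u,v)=-pu+\beta v$, we have $\cF_1>0$ \emph{above} the line $v=pu/\beta$ and $\cF_1<0$ below it, not the other way around; correspondingly $u_c$ (the root of $\cF_2^L(u,\ell_1(u))$, which is linear, not quadratic, in $u$) is the fixed point of the \emph{lower} system $A_L$, while the upper system's fixed point sits at $\bar u_c\geq u_c$ and may be infinite when $\beta\Lambda_p\geq p-1$ --- you have these swapped, and the $\bar u_c=\infty$ case is not addressed at all. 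Second, and more seriously, the quadrant $\cR_\eps=\{u\geq u_c-\eps,\ v> pu_c/\beta-\eps\}$ is \emph{not} forward invariant when $\beta>p$ (precisely the interesting low-temperature regime): in that case the portion of the left boundary $\{u=u_c-\eps,\ pu_c/\beta-\eps\leq v<\ell_1(u_c-\eps)\}$ is nonempty and lies below the nullcline, where $\cF_1<0$, so the vector field (for $A_L$, for $A_U$, and for any trajectory satisfying \condI) points \emph{out} of $\cR_\eps$ there. Your ``finite check on each of the two boundary pieces'' would fail on this piece, and there is no fix within your setup short of changing the shape of the region.

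This is exactly why the paper does not use a quadrant: the absorbing set $\cA_\eps$ of item~\eqref{item:absorbing} of \prettyref{thm:full-phase-portrait} has boundary made of \emph{integral curves} of $A_U$ and $A_L$ (so that the graph comparison of \prettyref{cor:graph-cor}, which is only valid within $W_\pm$ up to the hitting time of the nullcline, forbids crossing them), together with small boxes $B_\eps(z_c)$, $B_\eps(\bar z_c)$ around the fixed points intersected with $W_\mp$, on whose remaining faces one checks directly that $\dot U\cdot \hat n<0$ using \condI. Relatedly, your inference ``$\cR_\eps$ invariant for the lower system, plus the analogous (easier) invariance for the upper system, forces $U(t)\in\cR_\eps$'' is not justified: invariance of a region under both bounding flows does not transfer to trajectories satisfying only the differential inequality, because the trajectory-wise comparison is a graph comparison valid only away from $\{\cF_1=0\}$, and the pointwise comparison (\prettyref{thm:rectangle-condition}) only holds in the quasi-monotone region $V_-$; near the fixed points the trajectory repeatedly crosses the nullcline, which is where your argument breaks and where the paper's construction of $\cA_\eps$ (and the region-by-region hitting-time bounds of items~\eqref{item:A4}--\eqref{item:A1}, which also give the uniform $T_0$ you assert from compactness) does real work. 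To repair your proposal you would essentially have to reconstruct that absorbing set and the piecewise comparison argument; the final Portmanteau step and the use of the regularity estimates for \condB~are fine as you describe them.
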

Before discussing the remaining questions, we pause to make the following important remark  
on the constants $\Lambda_p,u_c$ and the dependence of $u_c$ on $\beta$.

\begin{remark}\label{rem:lambda-u-c}
 Let $G$ be the restriction of
the Euclidean Hessian of $H$ to the tangent space, $T_x \cS^N$ (see \eqref{eq:g-def}); then $\Lambda_p$ in \eqref{eq:cf-def} can be taken to be any constant satisfying
\begin{equation}\label{eq:Lambda-p-defn}
\Lambda_p > \limsup_{N\to\infty} \mathbb E \Big[\mbox{$\sup_{x\in \mathcal S^N}$} \abs{G(x)}_{op}\Big]\,,
\end{equation}
where $\abs{G}_{op}$ denotes the operator norm  for $G$ when viewed as a bilinear form on $T_x \cS^N$.
An explicit, but suboptimal, admissible choice of  $\Lambda_p$ is 
\begin{align}\label{eq:choice-of-Lambdap}
\Lambda_p = \sqrt{p(p-1)}(\sqrt{2}+E_{0,p-2})\,,\qquad \mbox{where}\qquad E_{0,p}:=\lim_{N\to\infty} \big\|{\frac{H_{N,p}}{N}}\big\|_{L^{\infty}}\,,
\end{align}
seen to satisfy~\eqref{eq:Lambda-p-defn} via standard Gaussian comparison inequalities~\cite{GJ16}. 
(For $p=1,2$ we have $E_{0,1}=1$ and $E_{0,2}=\sqrt 2$; for $p\geq 3$, see \cite{ABA13,JagTobLT16,ChenSen15} for explicit formulae for the ground state energy). 

For any choice of $\Lambda_p$ and $\beta>0$, the minimal energy reached is bounded above by $-u_c N$ where 
\begin{align}
u_c = u_c(\beta) & =  \beta\left(1 +  \tfrac{\beta \Lambda_p}{p-1}\right)^{-1}\,. \label{eq:u_c}
\end{align}
Using the sub-optimal choice of $\Lambda_p$ from~\eqref{eq:choice-of-Lambdap}, we can plug in for various values of $p$, and obtain explicit upper bounds on the energy the process reaches and remains at for all order one times. In particular, if we compare the low-temperature values of $-u_c N$ to the conjectured \emph{threshold energy} $-E_{\infty,p} N$  (the energy at which low-temperature Langevin dynamics is believed to get stuck on order one times when initialized uniformly at random) the choice of $\Lambda_p$ from~\eqref{eq:choice-of-Lambdap} ensures that uniformly over all initializations, the energy reaches the following fraction of the threshold in order 1 times:   
\begin{align*}
\sup_{\beta} \frac{u_c(\beta)}{E_{\infty,p}} = \frac{1}{2(\sqrt 2+E_{0,p-2})}\,.
\end{align*}
We plot the resulting lower bounds on $u_c/E_{\infty,p}$ as a function of $\beta$ for various choices of $p$ in Figure~\ref{fig:quantitative-uc}. 
\end{remark}

\begin{figure}[t]
\vspace{-.5cm}
\begin{tikzpicture}
\centering
\node at (-4.1,-.05) {\includegraphics[width=0.445\textwidth]{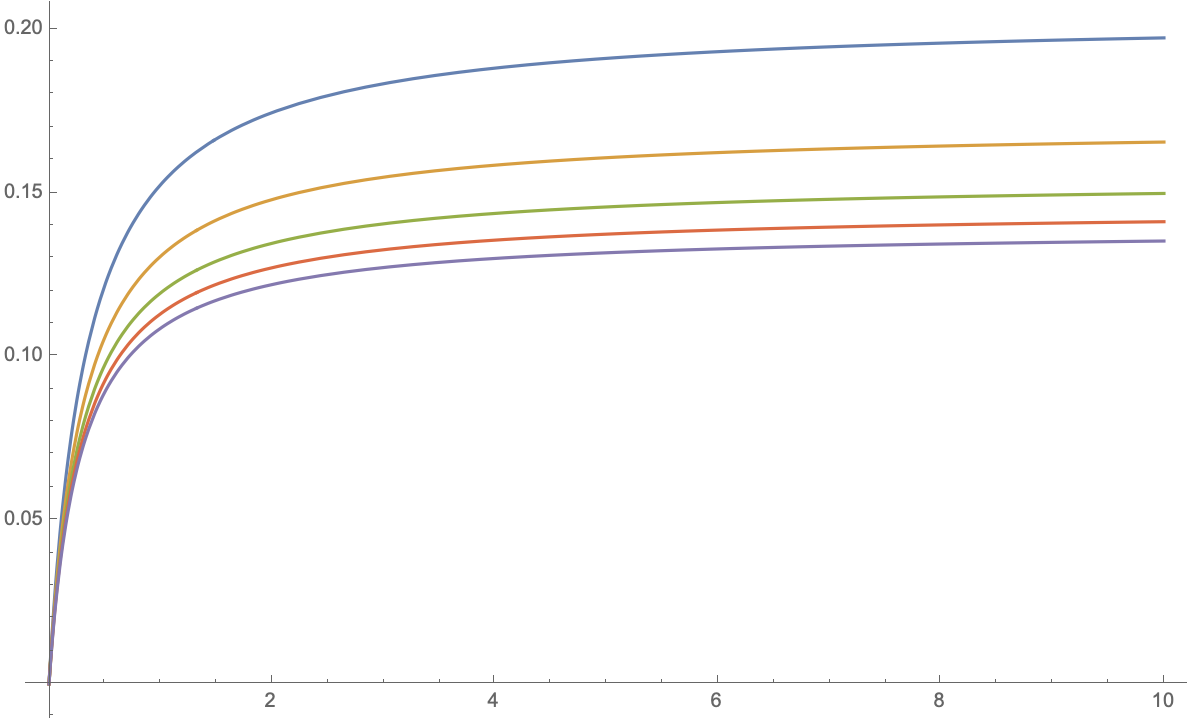}};
\node at (4.1,0) {\includegraphics[width=0.51\textwidth]{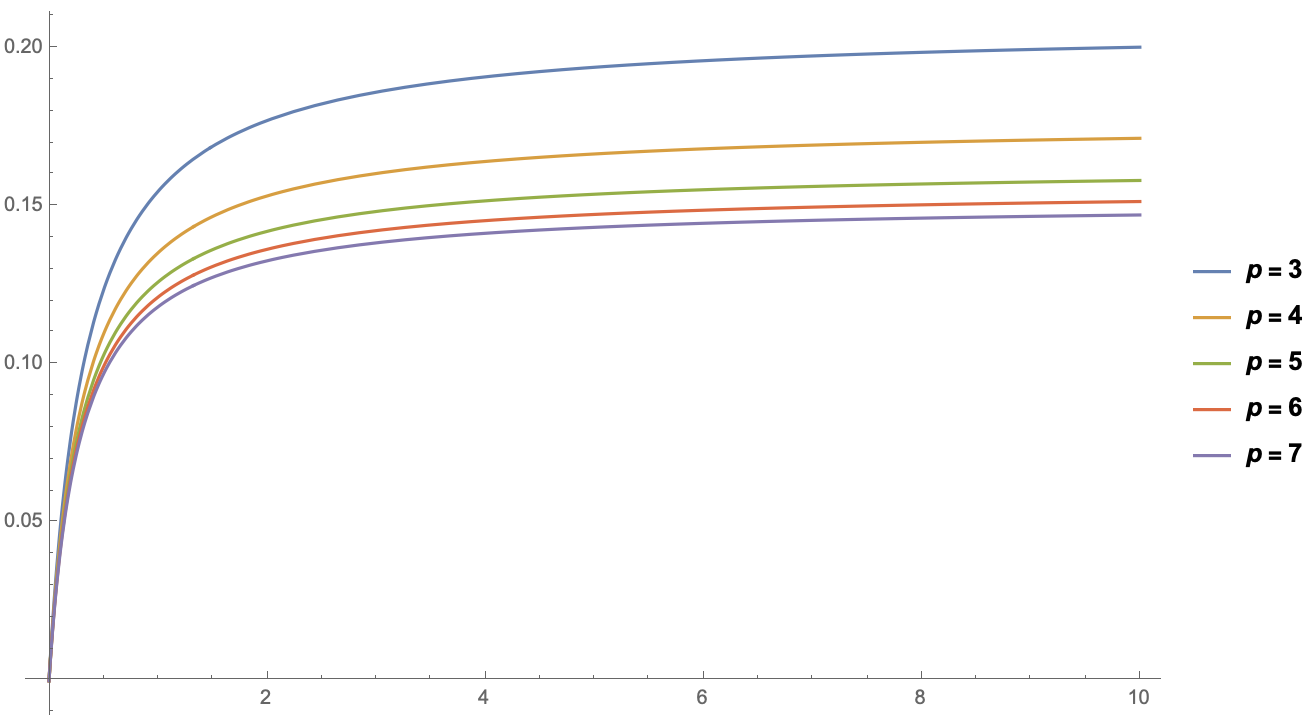}};

\node[font=\small]  at (7.1,-1.8) {$\beta$};
\node[font=\small]  at (-.5,-1.8) {$\beta$};
\node  at (.7,2.1) {$\frac{u_c(\beta)}{E_{\infty,p}}$};
\node  at (-7,2.1) {$\frac{u_c(\beta)}{E_{0,p}}$};
\end{tikzpicture}

\vspace{-.4cm}
\caption{Lower bounds on the fraction of the ground state (left) and threshold (right) energies, that we guarantee $X_t$ reaches in $O(1)$ time, uniformly over all $X_0$.}
\label{fig:quantitative-uc}
\end{figure}

One can in fact obtain a more precise absorbing region for the dynamics which holds on the same time scale
$T_0$ as that described in \prettyref{thm:performance-guarantee}. This absorbing region is related to the fixed points of the bounding flow and the transit between these fixed
points by the flows. This is illustrated in \prettyref{fig:approx-pd} and described in detail in Section~\ref{subsec:phase-portrait-defs}. We note here, importantly, that the bounding ``ellipsoid" from  \prettyref{fig:approx-pd}, 
shrinks to a point as $\beta\to0$, whereas when $\beta\to\infty$, the upper curve converges
to the vertical line, $u=u_c$, defined by the event in \prettyref{thm:performance-guarantee} (that is, the fixed point for the 
``upper bounding flow" diverges at low temperature). 

Let us now turn to the behavior of the dynamics near critical points.
As a second consequence of the bounding flows approach, we prove that critical points of negative energy---including those with indexes of order 
$N$---repel $X_t$ to regions of larger gradients as well as \emph{larger} energies. The fact that the distance to the critical point is 
repulsive is explained by the fact that diffusions do not hit small balls in high dimensions; but the fact that the energy increases rapidly even 
when the Hessian  has $(\frac 12-\eps)N$ negative eigenvalues---and thus directions of lower energy---is surprising.

To be precise, for $(u_0,v_0)\in\R^2_+$,  let 
\begin{equation}\label{eq:omega-def}
\Omega_N(u_0,v_0) = \{x\in\cS^N: H(x) < -u_0 N, \, \abs{\nabla H(x)}^2<v_0 N\}\,.
\end{equation}
We obtain the following answer to the third question.
\begin{theorem}[Climbing saddles and wells] \label{thm:shoot-from-critical-points}
Fix any $\beta>0, p>2$. For every $\eta>0$ and every $\delta<p\eta \beta^{-1}$, there exists $c_1,c_2,\rho>0$,
such that $\mathbb P$-almost surely,
\[
\lim_{N\to\infty}\,\inf_{x\in \Omega_N(\eta,\delta) } Q_{x}\left(\left\{
\begin{array}{l}H(X_t)-H(X_0)>c_1 t N \\ \abs{\nabla H(X_t)}^2-\abs{\nabla H(X_0)}^2 > c_2 t N \end{array} \text{ for all } t\in[0,\rho]\right\}\right) = 1\,.
\]

\end{theorem}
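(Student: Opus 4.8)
\emph{Strategy and reduction to the limiting flows.} The plan is to deduce the statement from Theorem~\ref{thm:bounding-flows-thm} by compactness, reducing it to an elementary estimate on the limiting $C^1$ flows near their common starting configuration. Suppose the conclusion fails. Then there are $\eps_0>0$, a subsequence, and points $x_N\in\Omega_N(\eta,\delta)$ along it for which $Q_{x_N}$ of the complementary event is at least $\eps_0$. Since $\norm{H/N}_{L^\infty}\to E_{0,p}$ $\prob$-a.s., we may assume throughout that $\norm{H/N}_{L^\infty}\le E_{0,p}+1$, whence $u_N(0)=-H(x_N)/N\in[\eta,E_{0,p}+1]$ and $v_N(0)=\abs{\nabla H(x_N)}^2/N\in[0,\delta)$. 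By Theorem~\ref{thm:bounding-flows-thm} the laws of $U_N$ started from $x_N$ are tight in $C([0,\rho])^2$; passing to a further subsequence, $U_N\Rightarrow U=(u,v)$ with $U$ a.s.\ $C^1$ satisfying \eqref{eq:diff-ineq-main}, and $(u_N(0),v_N(0))\to(u_0,v_0)$ with $u_0\ge\eta$, $v_0\in[0,\delta]$. It therefore suffices to prove the deterministic claim: there exist $c_1,c_2,\rho>0$, depending only on $p,\beta,\Lambda_p,\eta,\delta$, such that every $C^1$ path $(u,v)$ obeying \eqref{eq:diff-ineq-main}--\eqref{eq:cf-def} with $u(0)\ge\eta$ and $v(0)\in[0,\delta]$ satisfies $u(t)<u(0)-c_1t$ and $v(t)>v(0)+c_2t$ for all $t\in(0,\rho]$. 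Granting this, the limit $U$ a.s.\ satisfies $u(t)<u(0)-c_1t$ and $v(t)>v(0)+c_2t$ on $(0,\rho]$, and a routine Portmanteau argument --- in the sup-norm topology of $C([0,\rho])^2$, with a vanishing fattening to absorb the convergence $(u_N(0),v_N(0))\to(u_0,v_0)$ and the usual understanding (cf.\ Theorem~\ref{thm:performance-guarantee}) that the event is read over times bounded away from $0$, so that the diffusive fluctuations of $U_N$ at the initial instant are harmless --- forces $Q_{x_N}$ of the complementary event to $0$, contradicting $Q_{x_N}(\cdot)\ge\eps_0$.

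\emph{The ODE estimate at the starting configuration.} The hypothesis $\delta<p\eta\beta^{-1}$ is precisely what keeps both coordinates moving in the asserted directions. First, $\dot u(0)=\cF_1(u_0,v_0)=-pu_0+\beta v_0\le -p\eta+\beta\delta=:-2c_1<0$, and in particular $pu_0-\beta v_0\ge p\eta-\beta\delta>0$. Substituting this into the lower differential inequality,
\[
\dot v(0)\ \ge\ \cF_2^L(u_0,v_0)\ =\ 2p(p-1)-2\big((p-1)+\beta\Lambda_p\big)v_0+2pu_0\,(pu_0-\beta v_0)\,,
\]
the first term is positive since $p>2$, the last term is positive by the preceding line together with $u_0\ge\eta>0$, and the middle term is at least $-2\big((p-1)+\beta\Lambda_p\big)\delta$ because $v_0\le\delta$; estimating then gives $\dot v(0)\ge 2c_2>0$ for an explicit $c_2=c_2(p,\beta,\Lambda_p,\eta,\delta)$. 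When $x$ is an exact critical point, $v_0=0$ and this is immediate, since $\cF_2^L(u_0,0)=2p(p-1)+2p^2u_0^2>0$: the constant $2p(p-1)$ --- essentially the trace of the tangential Hessian --- is what forces the gradient to grow, which is the mechanism behind ``climbing.''

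\emph{Propagation and the main obstacle.} After shrinking $\rho$, the path stays in the bounded box $\{0\le u\le E_{0,p}+1,\ 0\le v\le M\}$ for $M$ a fixed constant (this follows from $\dot u(0)<0$ and a crude Gr\"onwall bound on $\dot v\le\cF_2^U$); on this box $\cF_1,\cF_2^L,\cF_2^U$ are bounded by some $K$, so $t\mapsto(u(t),v(t))$ is $K$-Lipschitz, and hence, by continuity of $\cF_1$ and $\cF_2^L$, there is $\rho=\rho(c_1,c_2,K)>0$, uniform over all admissible paths, with $\cF_1(u(t),v(t))\le -c_1$ and $\cF_2^L(u(t),v(t))\ge c_2$ on $[0,\rho]$. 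Integrating the identity $\dot u=\cF_1$ and the inequality $\dot v\ge\cF_2^L$ gives $u(t)\le u(0)-c_1t$ and $v(t)\ge v(0)+c_2t$ on $[0,\rho]$, and relabelling constants produces the strict bounds used above. I expect the main obstacle to be the sign estimate $\cF_2^L(u_0,v_0)>0$ of the second step: this is the one point where the precise form of the hypothesis $\delta<p\eta\beta^{-1}$ and the sign $p>2$ are used, and it carries the ``climbing'' content (it is all but vacuous near an honest critical point but must be pushed through for $v_0$ up to $\delta$). The remaining work --- uniformity of $\rho$ and the constants over the family of limiting flows and over $x\in\Omega_N(\eta,\delta)$, and the passage from the weak convergence of Theorem~\ref{thm:bounding-flows-thm} to the uniform-in-$x$ conclusion --- is routine.
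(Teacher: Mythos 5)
Your proposal follows essentially the same route as the paper's proof: pass to subsequential limits of $U_N$ via Theorem~\ref{thm:bounding-flows-thm}, observe that $\cF_1<-c$ and $\cF_2^L>c$ on the region of the $(u,v)$-plane corresponding to $\Omega_N(\eta,\delta)$, propagate these signs over a short time $\rho$ by continuity, and conclude by Portmanteau --- your contradiction/subsequence framing, Lipschitz propagation, and remarks on reading the event away from $t=0$ are just a more detailed rendering of the same steps. The sign estimate you single out as the main obstacle (positivity of $\cF_2^L$ on the whole starting region under $\delta<p\eta\beta^{-1}$) is exactly the point the paper itself treats as an immediate calculation, so your treatment matches the published argument in both structure and level of detail.
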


In fact, it is clear from the above that, not only does the dynamics leave critical regions quickly,
it cannot even get close to them. For more on this, see
Section~\ref{sec:phase-portrait}. 

As an answer to the fourth question, we can obtain  trajectory-wise and pointwise comparisons as consequences of the differential inequalities allowing us to confine $U(t)$ started from $U(0)$ by the bounding flows started from the same initial data (see \prettyref{sec:comparison-theory}).

We end this section with the following important remarks.

\begin{remark}\label{rem:e-infty}
Recall that $-E_{\infty,p} N$ is the threshold energy
as in \cite{ABC13}, i.e., the energy level below which the expected number of local minima
diverges exponentially in $N$. It is believed in the physics literature (see e.g.,~\cite{Bi99}) that in order $1$ times, starting from a uniform point on $\cS^N$, the complexity threshold $-E_{\infty,p} N$ is where $H(X_t)$ gets stuck and where short-time aging is exhibited, as the critical points there have Hessians with very few negative eigenvalues, each of small magnitude. 

\prettyref{thm:performance-guarantee} and
\prettyref{thm:shoot-from-critical-points}, do not precisely identify where (or if) $H(X_t)$  gets stuck on order~$1$ time scales. However, they indicate that if the slowdown and aging at $-E_{\infty,p} N$ are due to the few flat directions that ``point down" near critical points, $X_t$ needs to maintain a large modulus of its gradient throughout its descent, and needs to stay macroscopically far from the critical points themselves.
\end{remark}
\begin{remark}\label{rem:ansatz}
We note here the following interesting observation about the differential system in Theorem~\ref{thm:bounding-flows-thm}. Under the ansatz that $G(\nabla H,\nabla H)(X_t) =O(\sqrt{N})$ for all order one times, $\Lambda_p$ disappears, the differential inequalities become equalities, and $(u_N,v_N)$ becomes fully autonomous. In this case, the resulting fixed point is at $(\beta,p)$. Surprisingly, this corresponds to $(\langle \frac{-H}{N}\rangle,\langle \frac{|\nabla H|^2}{N}\rangle)$ in the entire replica symmetric regime ($\beta<\beta_c$), where $\langle \cdot \rangle$ denotes expectation under $\pi$. In particular, under this ansatz, this would occur even in a window $\beta \in (\beta_d, \beta_c)$ where there is slow mixing.

Given that $\nabla H(x)$ and $G(x)$
can be shown to be independent, one can show that if $X_0$ is chosen
uniformly over $\cS^N$, the ansatz holds for $t=0$; moreover the ansatz is true under the Gibbs measure $\pi$ whenever $\beta<\beta_c$. One might thus hope to show that when $\beta$ is small, from a start chosen uniformly at random, $G(\nabla H,\nabla H)(X_t)$ remains microscopic throughout the trajectory of $X_t$ until equilibration, and the system $(u_N(t),v_N(t))$ closes. 
We note that this cannot be the case when $\beta>\beta_c$, where the Gibbs expectation of $G(\nabla H,\nabla H)(x)$ must in fact be order $N$ to guarantee that $(\langle \frac{-H}{N}\rangle,\langle \frac{|\nabla H|^2}{N}\rangle)$ is still a fixpoint of the system when $\langle\frac{-H}{N}\rangle \neq \beta$.

If desired, this could be extended further by starting from a point chosen uniformly at random amongst points of a given energy and modulus of gradient. Again, the independence of $G$ from both $H$ and $\nabla H$ would imply that the ansatz above holds with high probability at $t=0$. For such initializations, including the uniform at random initialization, one could then improve the bounding flows and narrow the corridors to which the dynamics are confined in short times, by using the bound $|G(\nabla H,\nabla H)(X_t)|\leq \Lambda_p' t v$ instead of the sup-norm bound of $\Lambda_p v$ used in~\eqref{eq:cf-def}.

\end{remark}

\subsection{Previous results}\label{sec:history}
The dynamics of mean field spin glass models are expected
to exhibit a deep and rich structure on both short and long timescales. 
At the level of convergence to equilibrium, when $\beta$ is small, the measure $\pi$ admits a logarithmic Sobolev inequality with constant $c>0$ which
is order $1$ in $N$ \cite{GJ16}, so the dynamics converges to $\pi$
in order $1$ time. For large $\beta$, if $L$ is the infinitesimal generator of $X_t$, the spectral
gap of $-L$ decays exponentially in $N$ \cite{GJ16, BAJag17}
so that the process $X_{t}$ reaches equilibrium only after a time that is exponential in $N$. 
For a physics approach to the analysis of the spectrum of $-L$, see  \cite{biroli2001metastable}. 
For studies of the spectrum and spectral gap
in related spin glass models see,  \cite{Mathieu,BovFag05,bauerschmidt2017very}.

At low temperatures, there has been extensive research on the behavior of glassy dynamics on timescales that
are exponentially large but shorter than the time to equilibrium, sometimes referred to as \emph{activated dynamics}. 
 On such timescales, glassy dynamics have traditionally been found to exhibit the important feature of \emph{aging}. 
Following the influential works of \cite{BouDean95,Bou92} in the physics literature, this question was studied
for Random Hopping time dynamics of the Random Energy Model (REM) in \cite{BABG02,BABoGa2,BABoGa},
and for the $p$-spin model in  \cite{BAGun12,BABoCe,BoGa}. 
More recently, aging has been established for Metropolis dynamics of the REM
 in the recent important works \cite{CernyWassmer,Gayr16}. For an analysis of Glauber dynamics
 of the REM in the physics literature, see \cite{baity2018activated}.

These dynamics are also interesting in timescales much shorter than the equilibration time, where the dynamics
is expected to exhibit the phenomenon of short-time aging at low temperatures.
This picture was first suggested 
in the physics literature in \cite{somp82}. For spherical $p$-spin models, 
the classical way to study dynamics in this regime is to introduce the two-time autocorrelation 
\begin{align*}
C_N(t,s) = \frac 1N \sum_i X_t^i X_s^i\,,
\end{align*}
corresponding to the overlap between $X_t$ and $X_s$. 
It is possible to find a second two--time function, the response function $R_N(t,s)$, such that the pair $(C_N,R_N)$ converges, as $N\to\infty$, to the solution of 
a system of coupled integro--differential equations. This was  proposed 
 in~\cite{crisanti1993sphericalp,CugKur93} and is now commonly referred to as the Cugliandolo--Kurchan equations. The main focus in the 
 physics literature has been to understand what these equations imply about $C_N(t,s)$ when $t$ and $s$ tend to infinity together, 
 after the thermodynamical $N\to\infty$ limit has been taken; in the low temperature regime, aging can be seen in the behavior of 
 $C_N$ in this limit.
This picture was established rigorously in the simpler
case of $p=2$ in \cite{BADG01} where the equations for $C$ and $R$ decouple. 
These equations have also been proven to hold
for $p\geq 3$ in \cite{BADG06}; however, here the decoupling is not expected and
it remains a deep and challenging question to prove aging.

One can also study the evolution of the energy, $H(X_t)$, via the Cugliandolo-Kurchan equations. However, 
due to the difficulty of the equations for $(C_N,R_N)$ when $p>2$, this expression is not particularly amenable to rigorous analysis 
without an ansatz on the form of $C$ and $R$. Moreover, the study of these equations is restricted to certain random initial 
conditions, typically the cases where the initial distribution of $X_0$ is well--concentrated and independent of the field $H$.
Observe that the approach through the Cugliandolo--Kurchan equations is exact, whereas the 
bounding flows approach pays the price of inequalities. In exchange, this allows us to analyze \emph{one-time} observables, from which we can glean meaningful information about the behavior of $X_t$. Furthermore, this information can depend in a precise way on the initial data, which we are now free to choose as we wish.
Finally, we observe that the analysis of the Hessian as it effects the dynamics has also been developed using the Cugliandolo--Kurchan approach in the physics literature in \cite{kurchan1996phase}.

\section{Outline and ideas of proofs}\label{sec:outline-and-ideas}
In this section, we sketch out a derivation of the differential inequality of Theorem~\ref{thm:bounding-flows-thm}, then formalize the phase diagram depicted in Figure~\ref{fig:approx-pd} in Theorem~\ref{thm:full-phase-portrait}, from which we will directly conclude Theorems~\ref{thm:performance-guarantee} and~\ref{thm:shoot-from-critical-points}.  
The rest of the paper will then develop the necessary regularity theory and make the derivation of the differential inequality rigorous, before turning to comparison theory of bounded differential systems and proving Theorem~\ref{thm:full-phase-portrait}. 

\subsection{Derivation of Eq.~\eqref{eq:diff-ineq-main}}\label{subsec:derivation-diff-ineq}
We will sketch the derivation of the differential inequality of Theorem~\ref{thm:bounding-flows-thm}. Making this rigorous will be the project of Sections~\ref{sec:quasiautonomy}--\ref{sec:diff-ineq}.
 Let us begin first by considering the evolution of $H(X_t)$. Using that  $p$-spin Hamiltonian is a homogeneous function of order $p$, one can see that is an approximate spherical harmonic, $\Delta H = - p H +o(N)$, where the error term comes from the trace of the Euclidean Hessian (see  \eqref{eq:g-def} and \eqref{eq:tr-goe}). Consequently, by Ito's lemma, 
\begin{align*}
u_N(t)& \approx u_N(0)+\int_0^t \Big(p\frac{H(X_s)}{N} + \beta \frac{|\nabla H(X_s)|^2}{N}\Big)ds+M^u_t\,. 
\end{align*}
where $M_t^u$ is a martingale term. 
Given the above integral equation, it is then natural to consider the evolution of $|\nabla H(X_t)|^2$.  Again using Ito's lemma, 
\begin{align*}
v_N(t) = v_N(0) + \frac{1}{N}\int_0^t \Big(\Delta\abs{\nabla H}^2(X_s) - 2 \beta \nabla^2 H(X_s )(\nabla H(X_s),\nabla H(X_s))\Big) ds + M_t^{v}\,,
\end{align*}
where $M_t^v$ is again a martingale term. 
Examining the drift terms above, we will find that 
\begin{align*}
\Delta |\nabla H(X_s)|^2 \approx 2p(p-1) N + 2p^2 \frac 1{N} (H(X_s))^2 - 2(p-1) |\nabla H(X_s)|^2 \,,
\end{align*}
by elementary geometric considerations on the sphere and Bochner's formula (see~\eqref{eq:bochner}). 
At the same time, if we let $G$ be the Euclidean Hessian restricted to $T\cS^N$ as in Remark~\ref{rem:lambda-u-c}, then
\begin{align*}
\nabla^2H(X_s)(  \nabla H(X_s), \nabla H(X_s)) \approx G(\nabla H,\nabla H)(X_s) - p\frac{1}{N} H(X_s)|\nabla H(X_s)|^2 \,.
\end{align*} 
At this point, bounding the operator norm of $G$ by $\Lambda_p$ per~\eqref{eq:Lambda-p-defn}, we then find that 
\begin{align*}
v_N(0) + \int_0^t \mathcal F_2^L(u_N(s),v_N(s))ds  -o(1) \leq v_N(t)\leq v_N(0)+ \int_0^t \mathcal F_2^U(u_N(s),v_N(s))ds +o(1)\,.
\end{align*}
The Sobolev estimates for $H$ developed in Section~\ref{sec:regularity} will establish that the approximations above hold $\mathbb P$-eventually almost surely, uniformly over all choices of initial data, and will imply that both martingale terms are of order $O(\sqrt{t/N})$. Sending $N\to\infty$, we deduce~\eqref{eq:diff-ineq-main}. 

\subsection{Phase portrait of Figure~\ref{fig:approx-pd} via bounding flows}\label{subsec:phase-portrait-defs}
We now describe the precise phase diagram we can deduce from the differential inequalities above for all the subsequential limits of $(-\frac{H(X_t)}{N},\frac{|\nabla H(X_t)|^2}{N})$. Let us begin by decomposing the phase space into a set of regions in terms of which our phase diagram depicted in Figure~\ref{fig:approx-pd} is defined. (As $v_N$ is deterministically nonnegative, we are always restricting our flows to $\mathbb R \times \mathbb R_+$.) These regions will be described by certain zero-sets of the lower and upper bounding flows governed by the differential inequality in Theorem~\ref{thm:bounding-flows-thm}. To that end, recall the definitions of $\mathcal F_1$ and $\mathcal F_{2}^{L/U}$ from~\eqref{eq:cf-def} for any  $(x_{0},y_{0})$ in  $\R^2$,
let $A_{L}(t)$ and $A_U(t)$ denote the solutions to 
the initial value problems
\begin{equation}\label{eq:A-L-def}
\begin{cases}
\dot{A_L}(t)=\big(\mathcal F_1(A_L(t)), \mathcal F_2^L(A_L(t))\big)\\
A_L(0)=(x_{0},y_{0})
\end{cases}, \qquad \mbox{and}\qquad 
\begin{cases}
\dot{A_U}(t)=\big(\mathcal F_1(A_U(t)), \mathcal F_2^U(A_U(t))\big)\\
A_L(0)=(x_{0},y_{0})
\end{cases}.
\end{equation}
(As both $\Phi_L$ and $\Phi_U$ are locally 
Lipschitz, existence and uniqueness are ensured.)
Call $A_L(t)$ the \emph{lower bounding flow} and $A_U(t)$ 
the \emph{upper bounding flow}.
Now, define (refer to Figure~\ref{fig:curves})
\begin{equation}
\begin{aligned}
f_L(u) & =\frac{p(p-1)+p^{2}u^{2}}{p-1+p\beta u+\beta\Lambda_{p}}\,,\\
f_{U}(u)& =  \frac{p(p-1)+p^{2}u^{2}}{p-1+p\beta u-\beta\Lambda_{p}}\,,\\
\ell_1 (u)& = pu\beta^{-1}\,.
\end{aligned}
\end{equation}
where we impose domains $\{u:0\leq f_L(u)<\infty\}$ and $\{u:0\leq f_U(u)<\infty\}$ on the first two. 
We observe the following important facts about these functions. 

\begin{figure}
\centering
\resizebox{12cm}{!}{
\begin{tikzpicture}
\node at (0,0) {\includegraphics[width=0.8\textwidth]{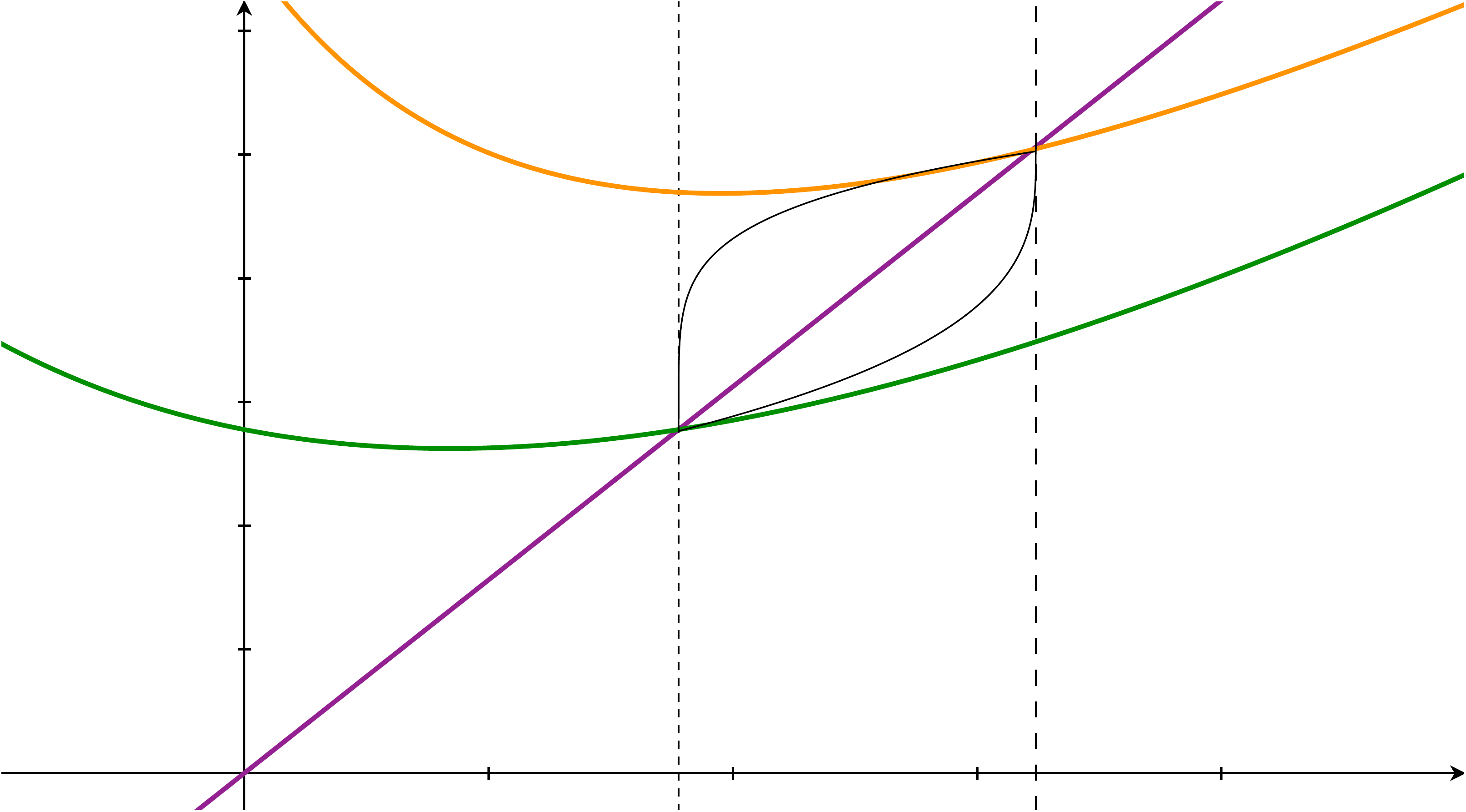}};
\node [font=\Large] at (-.5,-3.9) {$u_c$};
\node [font=\Large] at (2.75, -3.9) {$\bar u_c$};
\node [font=\Large] at (5,3.4) {$f_U(u)$};
\node [font=\Large] at (-2,-2.2) {$\ell_1(u)$};
\node [font=\Large] at (4,1.5) {$f_L(u)$};
\node [font=\LARGE] at (5.9,-2.85) {$\mathbf{-\frac HN}$};
\node [font=\LARGE] at (-5.1,3.2) {$\mathbf{\frac{|\nabla H|^2}N}$};
\end{tikzpicture}
}
\vspace{-.4cm}
\caption{The curves $\ell_1(u)$ (purple), $f_L(u)$ (green), and $f_U(u)$ (orange). The two black curves are $\gamma_U(u;z_c)$ and $\gamma_L(u;\bar z_c)$ above and below $\ell_1(u)$ respectively.}\label{fig:curves}
\end{figure}

\begin{remark}\label{rem:fixpoints}
For each $u$ in the domain of $f_L(u)$, the map $v\mapsto \cF_2^L(u,v)$ 
has a unique zero. In particular,  $f_L(u)$ is the unique function such that $\cF_2^L(u,f_L(u)) = 0$. 
Likewise for $f_U(u)$ with respect to $\cF_2^U$. 
It follows from this that the unique (attractive) fixpoint of $A_L$ is given when $f_L(u)=\ell_1 (u)$ and by explicit calculation, we see this happens at $z_c =(u_c,f_L(u_c))$ for $u_c$ defined in~\prettyref{eq:u_c}. Similarly, for $\bar u_c$ defined as 
\begin{equation}\label{eq:bar-u_c}
\bar{u}_{c}=\begin{cases}
\beta\left(1-\tfrac{\beta\Lambda_{p}}{p-1}\right)^{-1} & \mbox{if }\beta\Lambda_{p}<p-1\\
\infty & \mbox{if }\beta\Lambda_{p}\geq p-1
\end{cases}\,,
\end{equation}
when $\bar u_c <\infty$, $\bar z_c=(\bar u_c,f_U(\bar u_c))$ is the unique (attractive) fixpoint of $A_U$. Finally, we remark that by explicit calculation, one can see that $f_L'(u)>0$ for all $u\geq u_c$ and likewise, $f_U'(u)>0$ for all $u\geq \bar u_c$ when $\bar u_c<\infty$. 
\end{remark}

Let us also use the following hitting time notation. 
\begin{definition}
For fixed initial data $(x_0,y_0)$ 
and a set $B\subset \mathbb R^2$, let 
\begin{align*}
\tau^L_B & = \inf\{t\geq 0 : A_L(t)\in B\}\,, \qquad \mbox{and}\qquad \tau^U_B  = \inf \{t\geq 0: A_U(t) \in B\}\,.
\end{align*}
and for a dynamical system $U:[0,T]\to \mathbb R\times \mathbb R_+$ with $U(0)=(x_0,y_0)$,  let 
\begin{align*}
\tau_B= \tau_B(U(t)) = \inf\{t\geq 0: U(t)\in B\}\,.
\end{align*}
\end{definition}
We omit the dependence of the above on 
 $U(t)$ and $(x_0,y_0)$ when unambiguous. We use the convention that whenever hitting times are infinite, corresponding events are vacuously satisfied. 
 
The bounding dynamics depend crucially on whether $(x_0,y_0)$ is above or below $\ell_1(u)$. Let 
\begin{align}
W_{+}=\left\{(x,y)\in \mathbb R\times\mathbb R_+: \cF_{1}(x,y)>0\right\}\quad \mbox{and}\quad W_{-}=\left\{ (x,y)\in \mathbb R\times\mathbb R_+: \cF_{1}(x,y)<0\right\}\,. \label{eq:W-pm}
\end{align}
Observe that for $(x_{0},y_{0})\in W_{+}$, we have $\tau_{W_{+}^{c}}^{L}>0$
and $(A_{L}(t))_{t\leq\tau_{W_{+}^{c}}^{L}}$ is the graph of a function,
call it $\gamma_{L}(u;(x_{0},y_{0}))$ with domain $\mbox{Dom}(\gamma_{L},(x_{0},y_{0}))$.
The same holds for $(x_{0},y_{0})\in W_{-}$. Define $\gamma_{U}(\cdot;(x_{0},y_{0}))$
and $\mbox{Dom}(\gamma_{L}, (x_0,y_0))$ analogously for $A_{U}(t)$.

These allow us to define the subsets of $\mathbb R\times \mathbb R_+$ with respect to which we prove a phase diagram. The absorbing set for our dynamics will approximately be $A_{0}$, which is defined as follows:
\begin{equation}
A_{0}=\begin{cases}
\left\{ (u,v):v\in[\gamma_{L}(u;A_{U}(\tau_{W_{+}^{c}}^{U};z_c)),\gamma_{U}(u;z_c)]\right\}  & \mbox{if }\bar{u}_c<\infty\\
\left\{ (u,v):v\in[f_{L}(u),\gamma_{U}(u;z_c)]\right\}  & \mbox{otherwise}
\end{cases}\,.\label{eq:A_0}
\end{equation}

\begin{figure}
\centering
\resizebox{12cm}{!}{
\begin{tikzpicture}
\node at (0,0) {\includegraphics[width=0.8\textwidth]{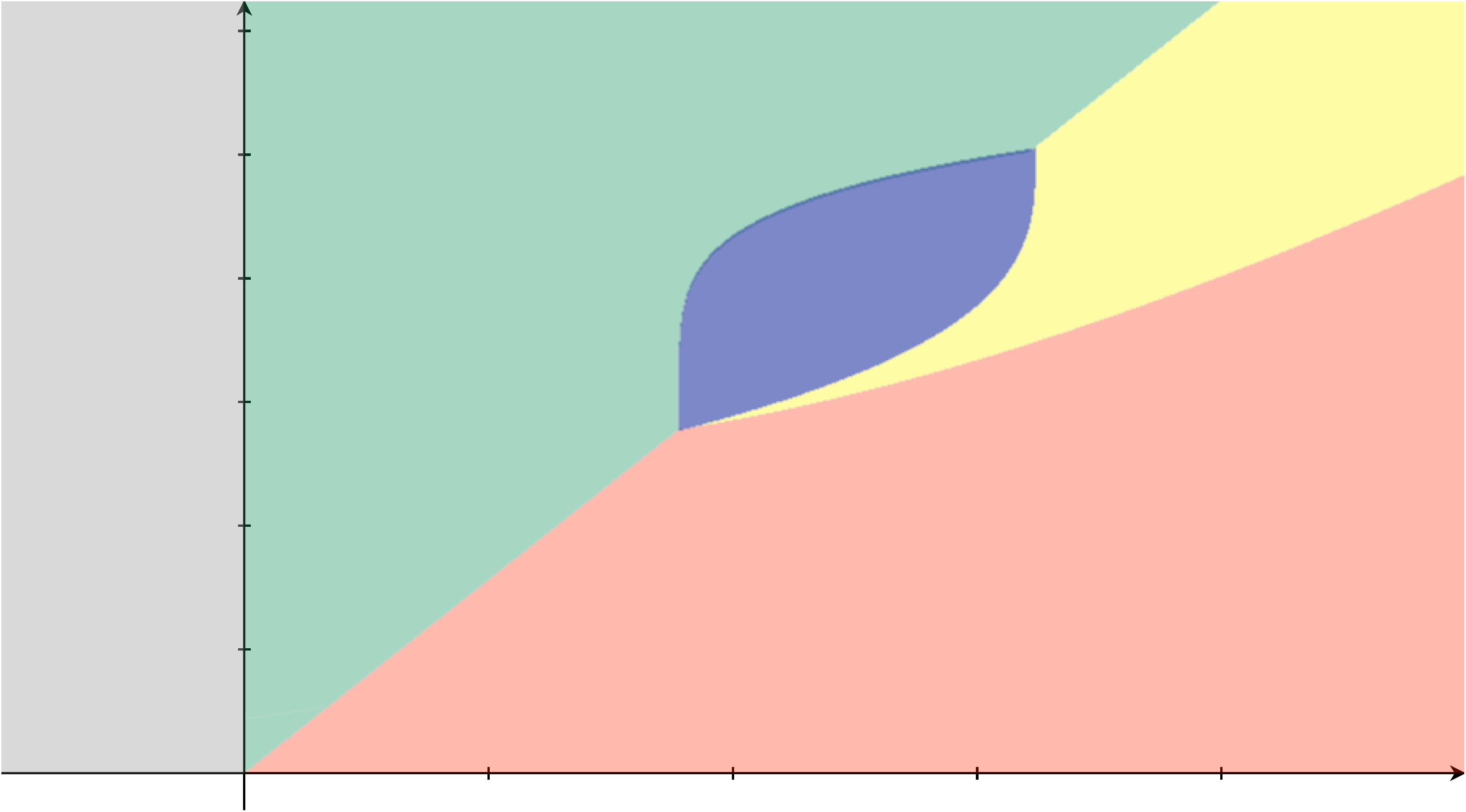}};

\node [font=\LARGE] at (-2.5,1) {$A_{2}$};
\node [font=\LARGE] at (1,1) {$A_{0}$};
\node [font=\LARGE] at (4,2) {$A_{1}$};
\node [font=\LARGE] at (1,-2) {$A_{3}$};
\node [font=\LARGE] at (-5.5,1) {$A_{4}$};
\node [font=\Large] at (-.48,-.25)[circle,fill,inner sep=1.5pt]{};
\node [font=\Large] at (-.48, -.6) {$z_c$};
\node [font=\Large] at (2.73,2.3)[circle,fill,inner sep=1.5pt]{};
\node [font=\Large] at (2.73, 2.6) {$\bar{z}_c$};
%
%
\node [font=\LARGE] at (5.9,-2.85) {$\mathbf{-\frac HN}$};
\node [font=\LARGE] at (-5.1,3.2) {$\mathbf{\frac{|\nabla H|^2}N}$};

\end{tikzpicture}
}
\caption{A diagram of the sets $A_0,...,A_4$. From any initial $(x_0,y_0)$, $\mathscr L$-almost surely, $U(t)$ finds and stays within distance $\epsilon$ of $A_{0}$ in finite time.}\label{fig:partition}
\end{figure}

Notice that the $u$--values here are implicitly constrained by the domains of 
of $\gamma_{L}$ and $\gamma_U$. As we wish to prove certain hitting times are finite, it will be helpful to enlarge
$A_{0}$ by $\delta$ near the fixpoints of $A_L$ and $A_U$. To this end, let
\begin{equation}
A_{0,\delta}=A_{0}\cup B_{\ell_{\infty}}(z_{c},\delta)\cup B_{\ell_{\infty}}(\bar{z}_{c},\delta)\,,\label{eq:A_0-eps}
\end{equation}
where if $\bar{u}_{c}=\infty$, the second ball is empty. 
Define the following other regions:
\begin{equation}\label{eq:A_i}
\begin{aligned}
A_{4}= & \{(u,v):u<0\}\,, \\
A_{3}= & \{(u,v):v\leq\ell_{1}(u)\wedge f_{L}(u)\}\,, \\
A_{2}= & \{(u,v):v>pu\beta^{-1}\}\setminus\left(A_{0}\cup A_{4}\right)\,, \\
A_{1}= & \{(u,v):v\in[f_{L}(u),\ell_{1}(u)]\}\setminus A_{0}\,.
\end{aligned}
\end{equation}
Note that if $\beta\Lambda_{p}$ is sufficiently large and $\bar u_c =\infty$,
we have $A_{1}=\emptyset$. We invite the reader to refer to and use Figure~\ref{fig:partition} as a guide throughout this section and Section~\ref{sec:phase-portrait}.

\begin{remark}\label{rem:A-0-definition}Since $\mathcal{F}_{2}^{U}(u,\ell_{1}(u))>0$ for
all $u<\bar{u}_{c}$ while $\mathcal{F}_{2}^{L}(u,\ell_{1}(u))<0$
for all $u>u_{c}$ the set $A_{0}$ is well-defined and contains the
entire segment $\{(u,v):u\in[u_{c,}\bar{u}_{c}],v=\ell_{1}(u)\}$. (In particular this implies that the union $ \bigcup_{i=0}^4 A_i =\mathbb R \times \mathbb R_+$.)
If we also consider the fact that $\mathcal{F}_{2}^{L}(u,v)\geq0$
for all $v\leq f_{L}(u)$ we deduce that $A_{0}\subset\{(u,v):u\geq u_{c},v\geq pu_{c}\beta^{-1}\}$. 
\end{remark}

We are now in position to describe the phase portrait for $U_N(t)$. It will be useful to restrict our attention to bounded subsets of $\mathbb R \times \mathbb R_+$.  The phase portrait will describe how dynamical systems satisfying the differential inequality of Theorem~\ref{thm:bounding-flows-thm}  and a boundedness assumption move between the sets $(A_i)_{i=0,...,4}$ in finite time. For two vectors $v,w\in\R^{2}$ we say $v\leq_{2}w$ if $v_{1}=w_{1}$
and $v_{2}\leq w_{2}$.

\begin{definition}\label{def:condi}
Let $U:[0,T]\to\R^{2}$ be $C^{1}$. We say that $U$ satisfies \textbf{Condition
I} if 
\begin{equation}
\Phi_{L}(U(t))\leq_{2}\dot{U}(t)\leq_{2}\Phi_{U}(U(t))\,.\label{eq:Diff-ineq}
\end{equation}
\end{definition}

\begin{definition}\label{def:condb}
We say that $U:[0,T]\to\R^{2}$ satisfies \textbf{\condB} if there
exists a compact (nonempty) $W\subset\mathbb{R}\times\mathbb{R}_{+}$ such that if
$U(0)\in W$ then $U(t)\in W$ for all $t\leq T$. 
\end{definition}

For ease of notation, we let $A_{i}$ denote the
sets from~\eqref{eq:A_0-eps}--\eqref{eq:A_i} restricted to the compact set $W$ for which
$U$ satisfies \condB. Recall that $\tau_{A}$ is
the hitting time of $A$ for $U$ started from $U(0)=(x_0,y_0)$. The phase diagram depicted in Figure~\ref{fig:approx-pd} can be described formally as follows.


\begin{theorem}\label{thm:full-phase-portrait}
Let $U:[0,T]\to \mathbb R^2$ satisfy \condI~ and \condB. For every $\epsilon>0$, there exists $\delta>0$ and $T_0$ such that the following holds: 
\begin{enumerate}
\item When $U$ starts in $A_{4}$, it exits into $A_2$ in time at most $T_0$. \label{item:A4}
\item When $U$ starts in $A_3\setminus A_{0,\delta}$, it exits into $A_{0,\delta}\cup A_1 \cup A_2$ in time at most $T_0$. \label{item:A3}
\item When $U$ starts in $A_2\setminus A_{0,\delta}$, it exits into $A_{0,\delta}\cup A_1$ in time at most $T_0$. \label{item:A2}
\item When $U$ starts in $A_{1}\setminus A_{0,\delta}$, it exits into $A_{0,\delta}$ in time at most $T_0$.  \label{item:A1}
\item There exists a set $\mathcal A_\epsilon$ having $A_{0,\delta}\subset \mathcal A_\epsilon \subset [u_c-\epsilon,\infty)\times [v_c-\epsilon,\infty)$ such that when $U$ starts in $\mathcal A_\epsilon$, it stays in $\mathcal A_\epsilon$ for all times.\label{item:absorbing}
\end{enumerate}
\end{theorem}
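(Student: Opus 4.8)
The plan is to analyze the phase space $W$ region-by-region, using in each case only the sign information of the bounding vector fields $\Phi_L$ and $\Phi_U$ dictated by \condI, together with the compactness furnished by \condB\ to turn ``the field has a definite sign'' into ``the transit happens in bounded time''. The key geometric inputs, already recorded in Remarks~\ref{rem:fixpoints}, \ref{rem:A-0-definition}, are: (i) $\cF_1(u,v)>0$ exactly above $\ell_1$ and $<0$ exactly below it, so $u$ increases in $W_+$ and decreases in $W_-$; (ii) $\cF_2^L(u,v)>0$ for $v<f_L(u)$ and $<0$ for $v>f_L(u)$, and similarly $\cF_2^U$ changes sign across $f_U$; (iii) $f_L\le f_U$ on their common domain, and $f_L,f_U$ are increasing past $u_c,\bar u_c$ respectively; (iv) the segment $\{v=\ell_1(u): u\in[u_c,\bar u_c]\}$ lies in $A_0$. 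Throughout, I fix $\epsilon>0$ and will choose $\delta=\delta(\epsilon)$ small at the end.

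\textit{Items (1)--(4).} For (1), on $A_4=\{u<0\}\cap W$ one has $\cF_1<0$ impossible to use directly, so instead observe $\cF_1(u,v)=-pu+\beta v>0$ when $u<0$ and $v\ge 0$; hence $u$ is strictly increasing with derivative bounded below by a positive constant on the compact set $\{u\le 0\}\cap W$ (using $\cF_1\ge \beta v\ge 0$ and, where $v$ is small, $-pu\ge 0$; a uniform lower bound follows since $W$ is compact and $\cF_1$ is continuous and positive on the relevant closed set, unless that set is empty). Thus $U$ leaves $\{u\le 0\}$ in time $\le T_0$, and one checks the exit is into $A_2$ by noting that along $u=0$ we have $v>0$ forced by $\cF_1>0$ requiring... more carefully: one shows $U$ cannot exit through $v=\ell_1(u)$ while $u<0$ because there $\cF_1=0$ and $\cF_2^L>0$ pushes $v$ up, so $U$ stays strictly above $\ell_1$, placing the exit point in $A_2$ (or $A_0$, which is contained in the allowed target by absorption). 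For (2)--(4) the scheme is identical: in $A_3\setminus A_{0,\delta}$, $\cF_2^L\ge 0$ with a uniform positive lower bound away from $f_L$ (and away from the fixpoint neighborhoods), so $v$ climbs at linear rate until $U$ reaches $f_L$ or $\ell_1$, i.e.\ enters $A_1\cup A_2\cup A_{0,\delta}$; in $A_2\setminus A_{0,\delta}$ one is above $\ell_1$ so $u$ increases at uniform rate, and the geometry of the region (bounded $u$ by \condB, $f_L$ increasing) forces entry into $A_1\cup A_{0,\delta}$ in bounded time; in $A_1\setminus A_{0,\delta}$ one is between $f_L$ and $\ell_1$, where both $\cF_1\le 0$ and $\cF_2^L\ge 0$ can hold, so $u$ decreases and $v$ increases, and since $A_1$ is a bounded region pinched between two curves meeting at $z_c$, the trajectory must reach the $\delta$-ball $B_{\ell_\infty}(z_c,\delta)\subset A_{0,\delta}$ (or cross $\ell_1$ into $A_0$) in bounded time. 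In every case the ``bounded time'' comes from: continuous vector field, compact region, field nonvanishing in the relevant component on the complement of the target, hence uniformly bounded away from $0$ there.

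\textit{Item (5): the absorbing set.} I will build $\mathcal A_\epsilon$ as the region between the graph of $\gamma_U(\cdot;z_c)$ (the upper bounding flowline emanating from $z_c$) above, and an appropriate lower boundary below, intersected with $W$ and fattened by $\delta$ near the fixpoints—essentially $A_{0,\delta}$ itself enlarged slightly to be forward-invariant under \emph{every} admissible $U$. Forward-invariance is checked on the boundary: along the upper curve $\gamma_U(\cdot;z_c)$, the outward normal is controlled and $\dot U\le_2 \Phi_U(U)$ is exactly tangent-or-inward because $\gamma_U$ is by definition a $\Phi_U$-flowline, so $U$ cannot cross it upward; along the lower boundary (which one takes to be $\gamma_L$ of a suitable point, or $f_L$ past $\bar u_c$, matching~\eqref{eq:A_0}), $\dot U\ge_2 \Phi_L(U)$ and $\Phi_L$ points inward (up) since below $f_L$ we have $\cF_2^L>0$; the left boundary near $u=u_c$ is handled by the $\delta$-ball around $z_c$ together with $\cF_1\ge 0$ there pushing $u$ rightward, and similarly at $\bar z_c$; Remark~\ref{rem:A-0-definition} gives $A_0\subset\{u\ge u_c, v\ge pu_c\beta^{-1}\}$, so choosing $\delta$ small enough that the $\delta$-balls don't spill past $u_c-\epsilon$ or $v_c-\epsilon$ (with $v_c:=pu_c\beta^{-1}$) yields the stated sandwich $A_{0,\delta}\subset\mathcal A_\epsilon\subset[u_c-\epsilon,\infty)\times[v_c-\epsilon,\infty)$.

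\textit{Main obstacle.} The delicate point is item (5)'s forward-invariance near the fixpoints and the matching of the piecewise-defined boundary of $A_0$ in~\eqref{eq:A_0}—in particular verifying that the lower boundary $\gamma_L(\cdot;A_U(\tau^U_{W_+^c};z_c))$ in the $\bar u_c<\infty$ case glues correctly to $\gamma_U(\cdot;z_c)$ and that no admissible $U$ can sneak out through the ``corner'' where the two curves and the $\delta$-balls meet; this requires a careful comparison-of-flows argument (the comparison theory of \condI\ alluded to in \prettyref{sec:comparison-theory}) showing that $\dot U$ trapped between $\Phi_L$ and $\Phi_U$ forces $U$ between the corresponding flowlines. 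The region-transit estimates (1)--(4) are routine once the sign structure is laid out, but establishing the uniform-in-$U$ time bound $T_0$ genuinely uses compactness of $W$ and cannot be skipped.
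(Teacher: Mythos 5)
Your overall strategy (sign analysis of the bounding fields, compactness of $W$ for uniform transit times, flowline comparison for the absorbing set) is the paper's strategy in broad outline, and your item (5) is essentially the construction used there. But there is a concrete error that breaks items (2)--(4) as you have argued them, most visibly item (4). You assert that in $A_1$ one has $\cF_2^L\ge 0$, so that ``$u$ decreases and $v$ increases'' and the trajectory drifts monotonically into the corner at $z_c$. In fact $A_1$ lies \emph{above} the curve $f_L$ (it is $\{v\in[f_L(u),\ell_1(u)]\}\setminus A_0$), and $\cF_2^L$ is negative above $f_L$; moreover on most of $A_1$ one has $\cF_2^L\le 0\le \cF_2^U$, so \condI\ gives no sign information on $\dot v$ at all there. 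Consequently your monotonicity argument yields neither the uniform time bound nor the exit location: you cannot rule out, by these means, that $U$ exits $A_1$ downward through $f_L$ into $A_3$ (the theorem requires exit into $A_{0,\delta}$ only), nor that it lingers near $\ell_1$ where $\cF_1\approx 0$. The paper needs genuinely more here: Lemma~\ref{lem:A1-lem} (the bounding flows leave $A_1$ only into $A_0$, proved by comparing the slope of $\gamma_L$ with $f_L'>0$ at a touching point), the trajectory-wise comparison of \prettyref{cor:graph-cor} to transfer this to $U$, and, for the time bound, a decomposition of $A_1\setminus A_{0,\delta}$ into a strip $E_+$ near $\ell_1$ (where $\cF_1\le 0$ and $\cF_2^U\le 0$ do give monotone motion) and its complement $E_-$ (where $\cF_1\le -c$), together with the steepness of $\gamma_U$ near $\ell_1$ to prevent re-entry into $E_+$.

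The same degeneracy undermines your blanket mechanism ``field nonvanishing in the relevant component, hence uniformly bounded below'' for the uniform $T_0$ in items (1)--(3): $\cF_1$ vanishes on $\ell_1$ (which trajectories in $A_2$, and the corner of $A_4$ at the origin, can approach), and $\dot v$ has no definite sign anywhere between $f_L$ and $f_U$. A correct uniform-time argument needs either monotonicity of \emph{both} coordinates plus a full-speed bound $\abs{\dot U}\ge c$ on the compact region minus the target (this is how the paper treats $A_3$, and it works because $\cF_1$ and $\cF_2^L$ cannot vanish simultaneously off $A_{0,\delta}$), or a multi-region splitting as in Lemma~\ref{lem:W+lem} ($E_1,E_2,E_3$) for the bounding flows, transferred to $U$ via \prettyref{cor:graph-cor} and a velocity bound depending continuously on the initial point. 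So the transit estimates you call routine are exactly where the comparison machinery of Section~\ref{sec:comparison-theory} and Lemmas~\ref{lem:W+lem}--\ref{lem:A1-lem} are consumed; as written, your proof of items (2)--(4) has a genuine gap, while items (1) and (5) are repairable sketches of the paper's argument.
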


\begin{remark}The set $\mathcal A_\epsilon$ will be a non-uniform enlargement of the set $A_0$ by some $O(\epsilon)$ distance; in particular, the enlargement is different around the corners of the set $A_0$, and is defined precisely in~\eqref{eq:mathcal-A-eps} of Section~\ref{sec:phase-portrait}, while proving Theorem~\ref{thm:full-phase-portrait}. 
\end{remark}

We defer the proof of Theorem~\ref{thm:full-phase-portrait} to Section~\ref{sec:phase-portrait} and its proof requires trajectory-wise and point-wise comparisons that are developed in Section~\ref{sec:comparison-theory} and may be independently interesting. With Theorem~\ref{thm:full-phase-portrait}, however, we are in position to easily complete the proofs of Theorems~\ref{thm:performance-guarantee} and~\ref{thm:shoot-from-critical-points}.

%
%
%
%
%

\subsection{Proofs of Theorems~\ref{thm:performance-guarantee}--\ref{thm:shoot-from-critical-points}}
We now read off our main results from Theorem~\ref{thm:full-phase-portrait}, up to a regularity estimate ensuring that \condB~is satisfied. Since we study properties of all possible limit points
for a sequence of initial data, we introduce the following notation
for convenience.


\begin{definition}\label{def:s-U}
For every sequence $x_N \in \cS^N$, let $\sU((x_N))$ denote the set of all possible limiting laws of $(u_N(t),v_N(t))_{t\in [0,T]}$ started from $X(0)=x_N$. Notice the implicit dependence on $T$ here.
\end{definition}

\begin{proof}[\textbf{\emph{Proof of Theorem~\ref{thm:performance-guarantee}}}]
First observe by Theorem~\ref{thm:bounding-flows-thm} that $\mathbb P$-eventually almost surely, for every $T$, every sequence $(x_N)\in \cS^N$, and every limit law $\sL\in \sU((x_N))$, $\sL$-almost surely, $U\sim \sL$ satisfies \condI. It will follow from standard $L^\infty$ bounds on ${H}/{N}$ and ${|\nabla H|^2}/N$ (special cases of Theorem~\ref{thm:reg}) that there exists $W\subset \mathbb R \times \mathbb R_+$ such that $\mathbb P$-eventually almost surely, for every $T$, every sequence $(x_N)\in \cS^N$, and every limit law $\sL\in \sU((x_N))$, $\sL$--almost surely, $U(0)\in W$ and \condB~holds.  

Now fix any $\epsilon>0$ and let $\delta>0$ be that given by \prettyref{thm:full-phase-portrait}. Let $(x_N)$ denote a sequence of near minimizers
of \eqref{eq:performance-guarantee} and let $\mathscr L\in \sU((x_N))$. As noted above, $\sL$-a.s., $U\sim\mathscr L$ satisfies the conditions of \prettyref{thm:full-phase-portrait}. Consequently, by stitching together items~\eqref{item:A4}--\eqref{item:A1} in Theorem \ref{thm:full-phase-portrait},
there is a $T_{0}$ such that $\tau_{A_{0,\delta}}\leq 4T_{0}$. By item~\eqref{item:absorbing} of 
\prettyref{thm:full-phase-portrait}, $\tau_{\cA_{\epsilon}}\leq\tau_{A_{0,\delta}}$
and thus, for every $T>4T_0$, 
\[
\sL\left(\Big\{ \inf_{t\in[4T_0,T]}u(t)>u_{c}-\epsilon\Big\} \cap\Big\{ \inf_{t\in[4T_0,T]}v(t)\geq v_{c}-\epsilon\Big\} \right)=1\,.
\]
As this event is strongly open in $C([0,T])^2$, it
follows by the Portmanteau Lemma that 
\[
\lim_{N\to\infty}Q_{x_{N}}\left(\Big\{ \inf_{t\in[4T_0,T]}u(t)>u_{c}-\epsilon\Big\} \cap\Big\{ \inf_{t\in[4T_0,T]}v(t)\geq v_{c}-\epsilon\Big\} \right)=1\,.
\]
The result follows as our $(x_N)$ was such that the probability in~\eqref{eq:performance-guarantee} got arbitrarily close to its infimum over all $x\in \mathcal S^N$ as $N$ went to $\infty$.
\end{proof}
\begin{proof}[\textbf{\emph{Proof of \prettyref{thm:shoot-from-critical-points}}}]
 For each $\eta>0,$ let $\delta_{0}(\eta)=p\eta/\beta=\ell_{1}(\eta)$.
Fix any $\delta<\delta_{0}$ and let $\Omega=\left\{ (u,v):u>\delta,v<\delta\right\}$.
There exists a $c>0$ such that $\cF_{1}$ and $\cF_{2}^{L}$ from
\eqref{eq:cf-def} satisfy $\cF_{1}<-c$ and $\cF_{2}>c$ on $\Omega$. Fix any sequence $(x_N)\in \cS^N$ such that $(-H(x_N)/N,|\nabla H(x_N)|^2/N)\in\Omega$. Then for every $\sL\in \sU((x_N))$, by continuity of $\cF_{1},\cF_2^L$,
there exist $c_{1},\rho>0$ such that $\sL$-a.s.,
\[
U^{1}(t)-U^{1}(0)<c_{1}t\quad\text{ and }\quad U^{2}(t)-U^{2}(0)>c_{1}t
\]
for every $t\leq\rho$. The result then follows by the Portmanteau
Lemma. 
\end{proof}

\subsection*{Acknowledgements} The authors thank the anonymous referees for their helpful comments and suggestions. The authors thank Giulio Biroli and Chiara Cammarota for interesting discussions. This research was conducted while G.B.A.\ was supported by NSF DMS1209165 and BSF 2014019, and A.J.\ was supported by NSF OISE-1604232. R.G.\ thanks NYU Shanghai for its hospitality during the time some of this work was completed.

\section{Quasi-autonomy for Langevin-type systems on spheres in large dimensions}\label{sec:quasiautonomy}

We seek to study the properties of scaling limits of 
observables of Langevin dynamics. The key observation
is that certain observables are \emph{quasi-autonomous}.
Informally, this will mean that they asymptotically (in $N$) satisfy 
autonomous differential inequalities. 

In this section, we
introduce the notion of a quasi-autonomous family
of observables. We then present an elementary
result proving quasi-autonomy for a sufficiently regular family 
of observables evolving with respect to Langevin dynamics on spheres in large 
dimensions. 

\textbf{Notation.}  Throughout, we say that $f_N \lesssim_a g_N$ if there is a $C(a)>0$ such that $f_N \leq C(a)g_N$ for every $N$. We write $f_N= O(g_N)$ if $f_N \lesssim g_N$ and $f_N = o(g_N)$ if $f_N/g_N \to 0$ as $N\to\infty$.  

Let $(V_N)$ be a sequence
of smooth functions with $V_N \in C^\infty(\cS^N)$. 
Let $X_{t}$  be the stochastic process on $\cS^N$
with generator 
\begin{align}\label{eq:langevin-generator-general}
L=\Delta-\beta\left\langle \nabla V_N,\nabla\cdot\right\rangle.
\end{align}
Here and in the following $\langle\cdot,\cdot\rangle$ will refer
to the inner product on $T\cS^N$ given by the metric $g$ and $\Delta, \nabla$ will be the covariant Laplacian and derivative respectively. Finally, $d(x,y)$ will be the distance on $\cS^N$. 
Notice that $X_{t}$ can be seen as the solution to 
the Langevin equation with Hamiltonian $V_N$:
\begin{equation}\label{eq:langevin}
\begin{cases}
dX_t &= \sqrt{2}dB_{t}-\beta \nabla V_N(X_t) dt\\
X_{0} &= x
\end{cases}\,,
\end{equation}
where $B_{t}$
is Brownian motion on $\cS^{N}$. {Since $V_N$ is smooth, one can solve this SDE in the strong sense (see e.g., \cite{Hsu02}). } We call $X_t$ the \emph{Langevin dynamics} corresponding to $V_N$. 
Let $Q_x$ denote the law of $X_{t}$ started from $X_0=x$ and let $\bE_x$ denote the corresponding
expectation.

As $B_t$ moves on order $\sqrt{N}$ distances
in order 1 time, it is natural to consider Hamiltonians whose gradients are on this scale. To this end, we say that a sequence of functions $(V_{N}(x))$ 
with $V_N \in C^{\infty}(\cS^{N})$ is \emph{C-regular} for $C>0$
if they satisfy the following gradient estimate: if, for every~$N$, 
\begin{equation}\tag{\textbf{GE}}
\norm{\,\abs{\nabla V_{N}}\,}_{L^{\infty}(\cS^{N})}\leq C\sqrt{N}\,.\label{eq:Grad-est-U}
\end{equation}
Here and in the following, for a vector $X\in T_x \cS^N$, 
$\abs{X}$ will denote the usual norm with respect to the induced metric. 
The subscript $N$ will henceforth be omitted and we will say that $V$ is \emph{$C$-regular}.

The following lemma shows that if the sequence of Hamiltonians, $(V_N)$, is C-regular
then the ``gradient descent'' and ``diffusive'' natures of $X_t$ are on the same scale.
\begin{lemma}
\label{lem:brownian-continuity} There is a universal constant $K>0$
such that the following holds.
\begin{enumerate}
\item For all $N\geq1$, every $x\in\cS^{N}$, and every $t\geq0$, 
\[
\bE_{x}\left[d(B_{t},x)^{4}\right]\leq KN^2 t^{2}.
\]
\item If $(V_N)$ is $C$-regular, then for every $N\geq1$,
$x\in\cS^{N}$ and every $T\geq 0$ and $t\in [0,T]$, 
\[
\bE_{x}\left[d(X_{t},x)^{4}\right]\leq\left(K+ \beta^4 C^{4}T^2\right)N^2 t^{2}\,.
\]
\end{enumerate}
\end{lemma}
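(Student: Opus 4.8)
The plan is to prove the two estimates separately, with the second following from the first by a Gronwall-type argument applied to the Langevin SDE. For part (1), the strategy is to control the fourth moment of the radial distance traveled by spherical Brownian motion. I would use the representation of $d(B_t, x)$ in terms of the heat kernel on $\cS^N$, or more conveniently, the fact that $\cos(d(B_t,x)/\sqrt N)$ (the normalized extrinsic overlap) is a one-dimensional diffusion — essentially the projection of $B_t$ onto the geodesic through $x$, which is a Jacobi-type process. Alternatively, and perhaps most cleanly, I would embed $\cS^N = \sqrt N \mathbb S^{N-1} \subset \R^N$ and use that the chordal distance $\|B_t - x\|$ and the geodesic distance $d(B_t,x)$ are comparable up to universal constants (since $d/\sqrt N$ and $2\sin(d/2\sqrt N)$ differ by a bounded factor while $d/\sqrt N \le \pi$). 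The extrinsic coordinate process $B_t$ in $\R^N$ satisfies $dB_t = \sqrt 2\, dB_t^{\text{flat}} - \tfrac{N-1}{N} B_t\, dt$ projected suitably, so $\|B_t - x\|^2 = 2N - 2\langle B_t, x\rangle$ and $\langle B_t, x\rangle$ is a scalar diffusion with bounded drift and diffusion coefficient of order $N$ as long as we're on the sphere; applying Itô to $\langle B_t,x\rangle$ and then to its powers gives $\bE[(\langle B_t,x\rangle - N)^2] = O(N^2 t)$ for small $t$ and more generally $\bE[\|B_t-x\|^4] = O(N^2 t^2)$. The factor $N^2$ comes from the quadratic variation rate (each of $\sim N$ independent Brownian components contributes, and the relevant functional is quadratic). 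I expect this computation to be routine once the right scalar process is identified.

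For part (2), I would write the Langevin dynamics $X_t$ driven by the same Brownian motion and compare it to $B_t$ via the triangle inequality $d(X_t, x) \le d(X_t, B_t) + d(B_t, x)$; but since $X_t$ and $B_t$ live on the same manifold and are coupled through the SDE, it is cleaner to work directly. Using $dX_t = \sqrt 2\, dB_t - \beta \nabla V(X_t)\, dt$, and the fact that $\abs{\nabla V(X_t)} \le C\sqrt N$ by $C$-regularity, I would estimate $d(X_t, x)$ by splitting into the stochastic part (bounded as in part (1)) and the drift part. The drift contributes at most $\int_0^t \abs{\beta \nabla V(X_s)}\, ds \le \beta C \sqrt N\, t$ to the distance traveled, so $d(X_t, x)^4 \lesssim d(B_t, x)^4 + (\beta C \sqrt N\, t)^4$ up to universal constants from expanding $(a+b)^4$. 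Taking expectations and using part (1) gives $\bE_x[d(X_t,x)^4] \le K' N^2 t^2 + \beta^4 C^4 N^2 t^4 \le (K + \beta^4 C^4 T^2) N^2 t^2$ for $t \le T$, absorbing the $t^4 = t^2 \cdot t^2 \le T^2 t^2$ bound. To make the "drift contributes at most $\int \abs{\nabla V}$ to the distance" step rigorous on a manifold, I would use that along the flow, $\tfrac{d}{ds} d(X_s, x) \le \abs{\dot X_s}$ wherever $d(\cdot, x)$ is smooth (i.e., away from the cut locus), and handle the cut locus by a standard approximation or by noting $d(\cdot,x) \le \pi \sqrt N$ is bounded anyway, so the inequality only needs to hold in an averaged sense.

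The main obstacle I anticipate is part (1): getting the clean $KN^2 t^2$ bound with the \emph{correct} powers of $N$ and $t$ and a truly universal constant. The subtlety is that the naive bound $\bE[d(B_t,x)^2] = O(Nt)$ (which one gets from $\bE[\text{QV}] = 2(N-1)t$ for the distance-squared-like functional) must be squared carefully — one cannot just say $\bE[d^4] = (\bE[d^2])^2$, so one needs either a direct fourth-moment Itô computation or a Burkholder–Davis–Gundy estimate on the martingale part of $d(B_t,x)^2$, controlling the quadratic variation of that martingale (which involves $\abs{\nabla d^2}^2 \sim d^2 \cdot (\text{stuff})$, leading to a self-improving integral inequality). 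Managing the geometry near the cut locus of the sphere (the antipodal point) is a minor additional technical point, but since the functional $d^2$ is only Lipschitz there and not $C^2$, one should either use $1 - \cos(d/\sqrt N)$ (which is globally smooth) as a proxy or note that the process is very unlikely to reach the antipode in short time. I would adopt the smooth-proxy route to keep everything clean.
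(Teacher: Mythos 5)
Your second proposed route for part (1) --- the extrinsic embedding $\cS^N\subset\R^N$ together with the chordal/geodesic comparability $d(x,y)\le\pi\lvert x-y\rvert$ --- is exactly the paper's. The paper writes $B_t-B_0$ via Stroock's It\^o representation as a drift integral plus the $\R^N$-valued martingale $\int_0^t(\mathrm{Id}-B_s\otimes B_s/N)\,dW_s$ and applies Burkholder--Davis--Gundy directly to that vector martingale, giving $\bE\big[\lvert\int\sigma\,dW\rvert^4\big]\lesssim\bE\big[(\int\lvert\sigma\rvert_F^2\,ds)^2\big]=(N-1)^2t^2$ in a single step; this sidesteps both concerns you raise at the end, since one never applies It\^o to $d(\cdot,x)^2$ (so the cut locus is irrelevant) and there is no fourth-moment bootstrapping. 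Your treatment of part (2) then matches the paper's: bound the drift contribution by $\beta C\sqrt N\,t$ via $C$-regularity and absorb the resulting $t^4\le T^2t^2$. (The precise coefficient of $B_t\,dt$ in the extrinsic SDE is immaterial here --- any drift of Euclidean norm $O(\sqrt N)$ contributes $O(N^2t^4)$, which is dominated by the martingale term for $t\lesssim1$ and by the trivial bound $d\le\pi\sqrt N$ otherwise.)
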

\begin{proof}
Recall Stroock's representation \cite[Exam. 3.3.2]{Hsu02} for $B_{t}$ which, in It\^o form, is
given by
\[
B_{t}-B_{0}=\int_0^t \Big(Id-\frac{B_{s}\tensor B_{s}}{N}\Big)dW_{s}-\int_0^t\frac{B_{s}}{N}ds\,,
\]
where $W_{t}$ is Brownian motion on $\R^{N}$, $Id$
is the identity on $\R^{N}$. Since $B_{t}\in\cS^{N}$, we have that
\begin{align*}
\abs{\frac{B_{s}}{N}}  =\frac{1}{\sqrt{N}} \qquad\text{ and }\qquad
\abs{Id-\frac{B_{s}\tensor B_{s}}{N}}_{F}^{2}  = N-1\,,
\end{align*}
where $\abs{\cdot}_{F}$ is the Frobenius norm. 
Observe that by the Burkholder--Davis--Gundy inequality
and It\^o's isometry,
\[
\bE_x\left[\abs{\int_0^t 
\Big(Id - \frac{B_s\tensor B_s}{N} \Big)dW_s}^4\right]\lesssim (N-1)^2t^2\,.
\]
Recall further
that the metric on the sphere and Euclidean space are comparable: 
\[
d(x,y)\leq\pi\abs{x-y}\quad\forall x,y\in\cS^{N}.
\]
Consequently, for every $x\in \cS^N$,
\[
\bE_x \left[d(B_{t},x)^{4}\right]\lesssim \bE_x\left[\abs{\int_0^t \Big( Id-\frac{B_s\tensor B_s}{N} \Big)dW_s }^4\right]+\bE_x\left[\abs{\int\frac{B_s}{N}ds}^4\right]
\lesssim N^2 t^2.
\]
This yields the first item; the second item is then an immediate consequence of the fact that $X_{t}$
solves the SDE \eqref{eq:langevin}, the estimate on $B_{t}$, and the $C$-regularity of $V$ \eqref{eq:Grad-est-U}.
\end{proof}

For $n\geq 1$, let $((F_N^k)_{k\in[n]})_N$ be a sequence of $n$-tuples of smooth functions, 
i.e., for each $N$, $(F_{N}^k)_{k\in [n]}\in C^\infty(\cS^N)^n$. We call such a sequence a \emph{family of observables}.
We seek to study families of observables whose fluctuations are of order 1 in time.

\begin{definition}
A family of observables $((F_N^k)_{k\in [n]})_N$ is \emph{mild} if 
for every $k\in[n]$, there is a $K_k\geq 0$ such that  $F^k_N$ satisfies the Sobolev estimate,
\begin{align}\tag{\textbf{SE}}
\limsup_{N\to\infty}\left(\norm{F_N^k}_{L^\infty(\mathcal S^N)}+ \sqrt{N} \norm{\,|{\nabla F_{N}^{k}}|\,}_{L^\infty(\mathcal S^N)}\right) & \leq K_{k}\,.\label{eq:gradient-estimate-tightness}
\end{align}
\end{definition}

With this notion in hand, we present the main tightness lemma, which is an elementary
application of the Kolmogorov criterion for tightness. We are  concerned with tightness in the spaces $C([0,T])$ endowed with the strong topology and $C([0,T])^n$ endowed with the product topology.
\begin{lemma}
\label{lem:tightness-general-lemma}
Let $X_t$ be Langevin dynamics corresponding to a C-regular sequence $(V_N)$. Let $n\geq1$ and let $( F_{N}^{k}) _{k\leq n}\subset C^{\infty}(\cS^{N})^n$ be a mild family of observables.
Then for all $T\geq0$ and for every sequence $(x_{N})$ with $x_{N}\in\cS^{N}$,
the family $\left( \left(F_{N}^{k}(X_{t})\right)_{k}\right) _{N}$ is
tight in $C\left(\left[0,T\right]\right)^n$.
\end{lemma}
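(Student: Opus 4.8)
The plan is to verify, for each coordinate process $t\mapsto F_N^k(X_t)$, the hypotheses of the Kolmogorov criterion for tightness with constants uniform in $N$, and then to conclude using that a finite product of tight families is tight for the product topology on $C([0,T])^n$.

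First I would record the deterministic bound coming from the mean value inequality on $\cS^N$: since $F_N^k\in C^\infty(\cS^N)$,
\[
\abs{F_N^k(x)-F_N^k(y)}\le \norm{\,\abs{\nabla F_N^k}\,}_{L^\infty(\cS^N)}\, d(x,y)\qquad\text{for all }x,y\in\cS^N\,.
\]
By mildness \eqref{eq:gradient-estimate-tightness}, there is $N_0$ such that for all $N\ge N_0$ and all $k\in[n]$ one has $\norm{F_N^k}_{L^\infty(\cS^N)}+\sqrt N\,\norm{\,\abs{\nabla F_N^k}\,}_{L^\infty(\cS^N)}\le K_k+1$, so in particular $\norm{\,\abs{\nabla F_N^k}\,}_{L^\infty(\cS^N)}\le (K_k+1)N^{-1/2}$.

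Next, for $0\le s\le t\le T$, I would estimate the increment moment by conditioning on the trajectory up to time $s$ and using that $X_t$ is a time--homogeneous diffusion: the conditional law of $d(X_t,X_s)$ equals that of $d(X_{t-s},X_0)$ under $Q_{X_s}$, so Lemma~\ref{lem:brownian-continuity}(2) gives
\[
\bE_x\big[d(X_t,X_s)^4\big]\le (K+\beta^4 C^4 T^2)\,N^2(t-s)^2\,,
\]
with the constant independent of the starting point. Combining this with the Lipschitz bound, for all $N\ge N_0$, all $x\in\cS^N$, and all $0\le s\le t\le T$,
\[
\bE_x\big[\abs{F_N^k(X_t)-F_N^k(X_s)}^4\big]\le \frac{(K_k+1)^4}{N^2}\,(K+\beta^4C^4T^2)\,N^2(t-s)^2 = C_k\,(t-s)^2\,,
\]
where $C_k=C_k(K_k,K,\beta,C,T)$ does not depend on $N$ or $x$. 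Since the power $2$ of $(t-s)$ exceeds $1$, Kolmogorov's tightness criterion applies, so the laws of $(F_N^k(X_t))_{t\in[0,T]}$, $N\ge N_0$, are tight in $C([0,T])$ with the uniform topology; the marginal at $t=0$ is automatically tight in $\R$ because $\abs{F_N^k(x_N)}\le\norm{F_N^k}_{L^\infty(\cS^N)}\le K_k+1$ is bounded. The finitely many indices $N<N_0$ each contribute a single (hence tight) law on the Polish space $C([0,T])$, so the sequence is tight in $C([0,T])$ for every $k\le n$, and therefore tight in the product space $C([0,T])^n$.

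The only point requiring care is the passage from the increment bound to Lemma~\ref{lem:brownian-continuity} via the Markov property while keeping the constant uniform in the arbitrary starting point; everything else is routine bookkeeping for Kolmogorov's criterion, and I do not expect a genuine obstacle.
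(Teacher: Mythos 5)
Your argument is correct and follows essentially the same route as the paper: a Lipschitz bound from the mildness gradient estimate, the fourth-moment displacement bound of Lemma~\ref{lem:brownian-continuity} transferred to increments via the Markov property, the Kolmogorov tightness criterion for each coordinate, and tightness in the product topology. The extra bookkeeping you include (handling the $\limsup$ in \eqref{eq:gradient-estimate-tightness} via $N_0$, the tightness of the time-zero marginal, and the finitely many small $N$) is left implicit in the paper but is consistent with its proof.
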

\begin{proof}
As $C([0,T])^{n}$ is equipped with the product topology, it suffices,
by a union bound, to show that for each $k$ and $\epsilon>0$,
there is a compact $E_{\epsilon}^{k}\subset C([0,T])$ such that for every $N$, 
\[
Q_{x_{N}}\left((F_N^k(X_t))_t \in E^{k}_\eps\right)\geq1-\epsilon\,,
\]
For this, it suffices 
to check the following Kolmogorov-type criterion (see, e.g., \cite[Exer.~2.4.2]{stroock1979multidimensional}): there exists $L_k$ such that  for every $x\in \cS^N$, every $t,s\leq T$ and every $N$,
\begin{align*}
\bE_x \left[|{F_{N}^{k}(X_{t})-F_{N}^{k}(X_{s})}|^{4}\right] & \leq L_{k}(t-s)^{2}\,,\\
\|{F_{N}^{k}}\|_{L^{\infty}(\mathcal S^N)} & \leq L_{k}\,,
\end{align*}
This follows by the assumption~\eqref{eq:gradient-estimate-tightness} and \prettyref{lem:brownian-continuity} combined with the Markov property.
\end{proof}

With the above in hand, we can now introduce the main notion of this section,
namely, that of a quasi-autonomous family of observables. 
\begin{definition}
Let $V$ be C-regular. 
A family of observables $(F_N^k)_{k\in[\ell]}$ is \emph{$V$-quasi-autonomous} if there exists $n\geq \ell$ and auxiliary observables $(F_N^k)_{k=\ell+1}^{n}$ such that the following hold.
\begin{enumerate} 
\item The \emph{augmented family}, $(F_N^k)_{k\in[n]}$, is mild. 
\item If $X_t$ is the Langevin dynamics with Hamiltonian $V_N$, then for every $k\leq \ell$, there is a smooth function $\cF_k:\R^n\to\R$ and a bounded stochastic process $g_N^k(t)$ adapted to the filtration of $(B_s)_{s\leq t}$
such that for every $N$,
\begin{equation}{\tag{C1}}\label{eq:splitting}
LF_{N}^{k}(X_{t})=\cF_{k}(F_{N}^{1}(X_t),\ldots,F_N^n(X_t))+g_{N}^{k}(t)\,,
\end{equation}
\noindent (where we recall $L$ is given by~\eqref{eq:langevin-generator-general}) and moreover, the sequence $g_N^k$ satisfies
\begin{equation}\tag{C2}\label{eq:g-decay}
\lim_{N\to\infty}\sup_{x\in\cS^N}\bE_{x}\left[\|{g_{N}^k}\|_{L^\infty([0,T])}\right]=0\,.
\end{equation}
\end{enumerate}
We call the functions, $(\cF_k)_{k\leq \ell}$, the \emph{dynamical functions} of the family.
\end{definition}

We then have the following theorem which is the main result of this section.
\begin{theorem}
\label{thm:differentiability-general-lemma} Let $V$ be $C$-regular. 
Let $\left( F_{N}^{k}\right) _{k\leq\ell}$ be a $V-$quasiautonomous
family of observables with auxiliary observables $(F_N^{k})_{k=\ell+1}^n$ and dynamical functions $(\cF_k)_{k\leq \ell}$.
Finally, let $u_N^k(t)= F_N^k(X_t)$ denote the evolution of the augmented family. Then for every $T\geq 0$, $(u_N^k(t))_i=1^n$ is tight in $C([0,T])^n$.
Furthermore, for any weak limit point of this sequence, $(u^k(t))_{k\leq n}$,
we have that for every $m\leq \ell$, $u^m$ is continuously differentiable and satisfies the integral equation
\begin{equation}
u^m(t) = u^m(0)+ \int_0^t \cF_m(u^1(s),\ldots,u^n(s))ds \,.
\end{equation}
\end{theorem}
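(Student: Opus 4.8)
The plan is to deduce Theorem~\ref{thm:differentiability-general-lemma} from the tightness lemma (Lemma~\ref{lem:tightness-general-lemma}) together with the splitting property~\eqref{eq:splitting}--\eqref{eq:g-decay}, by upgrading weak convergence of the augmented family to convergence of the associated integrated drift. First I would observe that since the augmented family $(F_N^k)_{k\in[n]}$ is mild, and in particular C-regular-compatible, Lemma~\ref{lem:tightness-general-lemma} immediately gives tightness of $((u_N^k(t))_{k\leq n})_N$ in $C([0,T])^n$; so fix a weakly convergent subsequence with limit $(u^k(t))_{k\leq n}$, and by Skorokhod's representation theorem realize it on a common probability space with $u_N^k \to u^k$ almost surely in $C([0,T])$ for every $k$.

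The core of the argument is to pass to the limit in the Dynkin/It\^o formula for each $m\leq \ell$. By It\^o's formula applied to $F_N^m$ along $X_t$,
\begin{equation*}
u_N^m(t) = u_N^m(0) + \int_0^t LF_N^m(X_s)\,ds + M_N^m(t)\,,
\end{equation*}
where $M_N^m(t) = \sqrt 2\int_0^t \langle \nabla F_N^m(X_s), dB_s\rangle$ is a martingale. Using the splitting~\eqref{eq:splitting}, the drift term equals $\int_0^t \cF_m(u_N^1(s),\dots,u_N^n(s))\,ds + \int_0^t g_N^m(s)\,ds$. I would then argue the three pieces separately: (i) the martingale term is negligible — by It\^o isometry and the mildness bound~\eqref{eq:gradient-estimate-tightness}, $\bE_x[M_N^m(t)^2] = 2\bE_x\int_0^t |\nabla F_N^m(X_s)|^2\,ds \lesssim_m t/N \to 0$, so $M_N^m \to 0$ uniformly on $[0,T]$ in $L^2$, hence (along a further subsequence) a.s.; (ii) the error term vanishes — by~\eqref{eq:g-decay}, $\bE_x[\|g_N^m\|_{L^\infty([0,T])}] \to 0$, so $\sup_{t\leq T}|\int_0^t g_N^m(s)\,ds| \leq T\|g_N^m\|_{L^\infty([0,T])} \to 0$ in $L^1$, hence a.s. along a subsequence; (iii) the main drift term converges — since $\cF_m$ is continuous (indeed smooth) and the vectors $(u_N^1(s),\dots,u_N^n(s))$ converge uniformly on $[0,T]$ to $(u^1(s),\dots,u^n(s))$ (which is continuous, hence bounded, on $[0,T]$), continuity of $\cF_m$ on the relevant compact set and dominated convergence give $\int_0^t \cF_m(u_N^1(s),\dots,u_N^n(s))\,ds \to \int_0^t \cF_m(u^1(s),\dots,u^n(s))\,ds$ uniformly in $t\in[0,T]$. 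Combining, the a.s. identity
\begin{equation*}
u^m(t) = u^m(0) + \int_0^t \cF_m(u^1(s),\dots,u^n(s))\,ds
\end{equation*}
holds for all $t\in[0,T]$; since $s\mapsto \cF_m(u^1(s),\dots,u^n(s))$ is continuous, the fundamental theorem of calculus yields that $u^m$ is $C^1$ with the stated derivative. Finally, I would note that the identification of the limit does not depend on the chosen subsequence, so it holds for every weak limit point.

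The main obstacle is a measure-theoretic bookkeeping one rather than an analytic one: one must be careful that the sequence of successive subsequence extractions (one for tightness, one for Skorokhod, one each to turn $L^1$/$L^2$ convergence into a.s. convergence of $M_N^m$ and $\int g_N^m$) is consistent, and that the boundedness of $g_N^k$ (assumed in the definition) together with the uniform integrability supplied by~\eqref{eq:g-decay} genuinely lets us discard the error term uniformly in the starting point. A secondary subtlety is that~\eqref{eq:splitting} is an identity of processes that a priori holds for each fixed $N$ with $X_t$ the $N$-dependent diffusion, so the passage to the limit must be done at the level of the laws of $(u_N^k(t))_{k\leq n}$ — which is exactly why we phrase everything through weak limit points and Skorokhod coupling rather than trying to make sense of a limiting SDE directly. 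Everything else (the It\^o formula, the It\^o-isometry estimate on the martingale, dominated convergence for the drift) is routine given the mildness and C-regularity hypotheses already in force.
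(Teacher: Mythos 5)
Your decomposition is exactly the paper's: It\^o's formula plus the splitting \eqref{eq:splitting} gives $u_N^m(t)=u_N^m(0)+\int_0^t \cF_m(u_N^1(s),\ldots,u_N^n(s))\,ds+\cE_N^m(t)$ with $\cE_N^m(t)=\int_0^t g_N^m(s)\,ds+M^m_t$; the martingale is controlled through the mildness bound \eqref{eq:gradient-estimate-tightness} (note that to get smallness of $\sup_{t\le T}|M^m_t|$ rather than of $M^m_t$ at fixed $t$ you should invoke Doob's maximal inequality or Burkholder--Davis--Gundy, as the paper does, not just the It\^o isometry), the error term is killed by \eqref{eq:g-decay}, the drift passes to the limit by continuity of $\cF_m$, and tightness is Lemma~\ref{lem:tightness-general-lemma} in both treatments. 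The one step that does not go through as written is your ``combining'' step: the almost-sure vanishing of $M_N^m$ and of $\int_0^\cdot g_N^m\,ds$ (along further subsequences) is established on the original probability space carrying $B_t$ and $X_t$, whereas the almost-sure uniform convergence $u_N^k\to u^k$ lives on the Skorokhod coupling space, where $M_N^m$ and $g_N^m$ are not defined; almost-sure statements on two different spaces cannot simply be intersected. The repair is standard but should be said: $\cE_N^m(t)=u_N^m(t)-u_N^m(0)-\int_0^t\cF_m(u_N^1(s),\ldots,u_N^n(s))\,ds$ is a continuous functional of the path $(u_N^1,\ldots,u_N^n)$, so its law transfers to the Skorokhod copies, the bound $\bE\big[\|\cE_N^m\|_{C([0,T])}\big]\to 0$ persists there, and a further subsequence gives almost-sure uniform vanishing on the same space on which $u_N^k\to u^k$ almost surely.

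The paper avoids the coupling (and the cascade of subsequence extractions you worry about) by working at the level of laws: it sets $\bar u_N^k=u_N^k-\cE_N^k$, notes that $\bar u_{N_i}^k$ converges weakly to $u^k(0)+\int_0^\cdot\cF_k(u^1(s),\ldots,u^n(s))\,ds$ by the continuous mapping theorem, shows $\sup_{x}\bE_x\big[\|\bar u_N^k-u_N^k\|_{C([0,T])}\big]\to0$, and concludes by the converging-together lemma (Kallenberg, Thm.~3.29) that the two weak limits coincide; the integral identity and then the $C^1$ regularity via the fundamental theorem of calculus follow exactly as in your last step. So: same approach and correct in substance, provided you either make the transfer of $\cE_N^m$ to the coupling space explicit or replace the Skorokhod argument by the converging-together lemma.
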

\begin{proof}
The tightness follows by the assumption that the augmented family is mild and \prettyref{lem:tightness-general-lemma}.
We now turn to the proof of the integral equations.

By It\^o's lemma and the assumptions, for every $k\leq \ell$, we can write
\begin{align*}
u_{N}^{k}(t) & =u_{N}(0)+\int_{0}^{t}\cF_{k}(u_{N}^{1}(s),...,u_N^n(s))ds+\cE_{N}^{k}(t)\,,\\
\cE_{N}^{k}(t) & =\int_{0}^{t}g_{N}^{k}(s)ds+M^k_{t}\,,
\end{align*}
where $M^k_{t}$ is a martingale with quadratic variation
\[
[M^k]_{t}=\int_{0}^{t}\abs{\nabla F^{k}_N(X_s)}^2ds\,.
\]
Fix any convergent subsequence $(N_i)_i$,
\[
\left(u_{N_{i}}^{k}\right)\convdist\left(u^{k}\right),
\]
where $(u^{k})$ is a weak limit point. If we let 
\[
\bar u_{N}^{k}=u_{N}^{k}-\cE_{N}^{k}\,,
\]
then 
\[
\bar u_{N_{i}}^{k}(t)\convdist \bar u^k(t)=u(0)+\int_{0}^{t}\cF_{k}(u^{1}(s),...,u^k(s))ds\,.
\]

On the other hand, for every $k$, there exists $K_k$ such that 
\[
\bE_x\Big[\|{M^k_{t}}\|_{C([0,T])}\Big]\leq\bE_x\sqrt{\int_{0}^{T}\abs{\nabla F^k_N(X_s)}^{2}ds}\leq\sqrt{\frac{K_kT}{N}}\,,
\]
by Burkholder--Davis--Gundy and the mildness criterion \eqref{eq:gradient-estimate-tightness}.
Thus, by assumption on $g^k_N$,
\[
\limsup_{N\to\infty}\sup_{x\in \mathcal S^N}\bE_x\Big[\|{\bar u_{N}^{k}-u_{N}^{k}}\|_{C([0,T])}\Big]=\limsup_{N\to\infty}\sup_{x\in \mathcal S^N}\bE_x\Big[\|{\cE_{N}^{k}}\|_{C([0,T])}\Big]=0\,.
\]
It then follows, by a standard approximation argument \cite[Thm. 3.29]{Kallenberg02}, that
\[
\left(u_{N_{i}}^{k}\right)\convdist\left(\bar u^{k}\right),
\]
implying the claimed integral inequality.
Since $(\cF_k)_{k\leq \ell}$ are smooth and $(u^{k})_{k=1}^{n}$ are continuous, $\bar u^{k}$ is continuously differentiable
for each $k\leq\ell$. Thus the corresponding $u^{k}$ are as well. 
\end{proof}

\section{Regularity theory for $p$-spin models {and the $\cG^k$ norm}}\label{sec:regularity}
In order to apply the preceding, we need to control the regularity
of the $p$-spin Hamiltonian in the appropriate Sobolev norm.
Throughout our entire discussion of the $p$-spin models, we fix $p>2$ and $H$ will 
always refer to the $p$-spin Hamiltonian, $H=H_{N,p}$ from \prettyref{eq:p-spin-defn}.

Recall the homogeneous Sobolev norm $\dot W^{k,\infty}(\cS^N)$,
\[
\norm{f}_{\dot W^{k,\infty}}:=\norm{ \sqrt{\g{\nabla^k f,\nabla^k f}}}_{L^\infty(\mathcal S^N)}\,.
\]
It will turn out to be easier to work with the following, more controlled, function space.
\begin{definition}\label{def:g-norm}
We say that a function is in the space $\cG_{K}^k(\cS^N)$, if 
\[
\norm{f}_{\cG^k_K} := \sum_{0\leq \ell\leq k} K^{{\ell}/2}\norm{\,|{\nabla^{\ell} f}|_{op}\,}_{L^\infty(\mathcal S^N)} <\infty\,.
\]
Here, $\abs{\nabla^k f}_{op}(x)$ denotes the  natural operator norm when $\nabla^k f$ is viewed as
a $k$-form acting on the $k$-fold product of the tangent space $T_x \cS^N$. Throughout the paper, unless otherwise specified $|\nabla^k f|$ will denote this norm. 
We let $\mathcal G^k(\mathcal S^N) = \cG_N^k(\cS^N)$ denote the special case $K=N$.
\end{definition}
\begin{remark}
{By equivalence of norms in finite dimensional vector spaces, $\cG^k$ 
is the classical $W^{k,\infty}$ space with an equivalent norm. }
We use this norm as opposed to the usual one{, based on Frobenius norms, } for the following reason.
For $k\geq2$, we study operator norms
of random tensors. It is well known that
for operators of this type there is a marked difference in
the scaling of the Frobenius and operator norms in the dimension (see, e.g.,~\cite{Vershynin}). {The choice of $K=N$ compensates for this difference in scaling at every $k$.} {To avoid ambiguity about the choice of Frobenius norm 
as opposed to operator norm, we denote this space here by $\cG^k$ as opposed to $W^{k,\infty}$.}
\end{remark}

The main goal of this section is the following estimate. In the following we say that a function $f(x)$ is of at most polynomial growth if there are $n,C,$ and $c>0$ such that 
$\abs{f(x)}\leq C\abs{x}^p+c$.

\begin{theorem}[Regularity]\label{thm:reg}
For every $k$, there exist $K_1(p,k), K_2(p,k),c_p >0$ such that
$K_1$ is of at most polynomial growth in $p$, and
\begin{enumerate}
\item $H$ is in $\cG^k$ uniformly in $N$ with high probability:
\begin{equation}\label{eq:g-norm-bound}
\limsup_{N\to\infty}\frac{1}{N}\log\prob(\norm{H}_{\cG^k} \geq K_1 N ) \leq -c\,,
\end{equation}

\item $H$ satisfies $\dot W^{k,\infty}$ bounds for $k=1,2$ uniformly in $N$ with high probability:
\begin{equation}\label{eq:sobolev-bound}
\begin{aligned}
\limsup_{N\to\infty}\frac{1}{N}\log\prob(\norm{H}_{\dot W^{k,\infty}}\geq K_2\sqrt{N}) \leq - c\,. 
\end{aligned}
\end{equation}
\end{enumerate}
\end{theorem}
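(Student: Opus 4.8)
The plan is to prove both parts of Theorem~\ref{thm:reg} by controlling the covariant derivatives $\nabla^\ell H$ on $\cS^N$ in terms of Euclidean data, and then applying Gaussian concentration. The first reduction is geometric: since $H$ is the restriction to $\cS^N$ of the homogeneous Euclidean polynomial from \eqref{eq:p-spin-defn}, each covariant derivative $\nabla^\ell H$ at a point $x$ can be written as a universal polynomial expression in the Euclidean derivatives $D^j H(x)$ for $j\le \ell$ (equivalently, in the coefficient tensors $J$ contracted with copies of $x$), with coefficients depending only on $p,\ell$ and the second fundamental form of the sphere. Concretely, I would fix an orthonormal frame for $T_x\cS^N$, use the Gauss formula $\nabla_Y Z = D_Y Z - \langle Y,Z\rangle \tfrac{x}{N}$ for the Levi-Civita connection, and inductively express $\nabla^\ell H$ as $D^\ell H$ restricted to the tangent space plus lower-order correction terms built from $D^j H$, $j<\ell$, and the normal $x/\sqrt N$. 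Each such term has operator norm (as a multilinear form on copies of $T_x\cS^N$) bounded by a constant times $\max_{j\le \ell} |D^j H(x)|_{op}$, where now $|D^j H(x)|_{op}$ is the operator norm of the symmetric Euclidean $j$-tensor obtained by contracting $J$ with $(p-j)$ copies of $x$. This shows that $\|H\|_{\cG^k_K}$ is controlled, up to a $p,k$-dependent constant, by $\sum_{j\le k} K^{j/2}\sup_{x\in\cS^N}|D^j H(x)|_{op}$.

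The second step is to estimate $\E\big[\sup_{x\in\cS^N}|D^j H(x)|_{op}\big]$ for each $j\le k$. The random $j$-tensor $x\mapsto D^jH(x)$ is, after the normalization in \eqref{eq:p-spin-defn}, a Gaussian field whose operator norm I would bound by a chaining/epsilon-net argument: cover $\cS^N$ and the relevant sphere of test directions in $(T_x\cS^N)^{\otimes j}$ by nets of cardinality $e^{O(N)}$, use Gaussian concentration for each fixed $(x,v_1,\dots,v_j)$ together with a union bound, and control the off-net fluctuation by the (polynomially bounded in $N$) Lipschitz constant of the field. This is a standard computation for spin-glass Hamiltonians (as used for $j=0$ in \cite{GJ16} and for the Hessian, $j=2$, in \cite{ABA13}); the key point is that the variance of $\langle D^jH(x), v_1\otimes\cdots\otimes v_j\rangle$ is $O(N)$ uniformly in $x$ and unit $v_i$, so that with the normalization the field is of size $O(\sqrt N)$ pointwise and the net argument promotes this to $O(\sqrt N)$ uniformly, with Gaussian tails: $\prob(\sup_x |D^jH(x)|_{op}\ge K_j\sqrt N)\le e^{-cN}$ for $K_j$ large. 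Combining with the $K=N$ scaling in the definition of $\cG^k$, the term $N^{j/2}\sup_x |D^jH(x)|_{op}$ is $O(N^{(j+1)/2})$... which for $j\ge 2$ exceeds $N$, so in fact one wants the sharper bound $\sup_x |D^jH(x)|_{op} = O(N^{1 - j/2})$ coming from the extra factors $N^{-(p-1)/2}$ and the $(p-j)$ contractions with $x$ (each of norm $\sqrt N$); tracking these powers carefully gives $N^{j/2}\cdot O(N^{1-j/2}) = O(N)$, which is exactly the $\cG^k$-scaling claimed in \eqref{eq:g-norm-bound}. Keeping the bookkeeping of these $N$-powers consistent across all $j\le k$ is the crux of part (1).

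For part (2), I would simply observe that $\dot W^{k,\infty}$ uses the Frobenius (Hilbert--Schmidt) norm of $\nabla^k H$ rather than the operator norm, and that on a fixed tensor these differ by at most a dimension factor: for a $k$-form on $T_x\cS^N \cong \R^{N-1}$, $|\nabla^k f|_{HS} \le (N-1)^{k/2} |\nabla^k f|_{op}$. For $k=1$ the two norms coincide, so $\|H\|_{\dot W^{1,\infty}} = \sup_x |\nabla H(x)| = \sup_x |D^1 H(x)|_{op} \cdot(1+o(1))$, which is $O(\sqrt N)$ with exponential tails by the $j=1$ case above — this is \eqref{eq:sobolev-bound} for $k=1$. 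For $k=2$, $|\nabla^2 H|_{HS}\le \sqrt{N-1}\,|\nabla^2 H|_{op}$, and by the $j=2$ computation $\sup_x|\nabla^2H(x)|_{op} = O(1)$ with exponential tails (the Hessian spectral norm is $O(1)$ on $\cS^N$, as recorded in \eqref{eq:Lambda-p-defn}), hence $\|H\|_{\dot W^{2,\infty}} = O(\sqrt N)$, giving \eqref{eq:sobolev-bound} for $k=2$.

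The main obstacle I anticipate is the first step — proving that the covariant derivatives are controlled by Euclidean ones with the correct $N$-power in the operator norm, uniformly over $\cS^N$ — because one must carry out the Gauss-formula induction cleanly, verify that no correction term has a worse $N$-scaling than the leading Euclidean term, and confirm that all the polynomial-in-$N$ Lipschitz bounds needed for the net argument survive the passage from Euclidean to covariant derivatives. The Gaussian concentration and net steps themselves are routine once this is set up, and the polynomial-in-$p$ growth of $K_1$ is read off from the combinatorial constants ($\binom{p}{j}$-type factors and factorials in $\ell\le k$) appearing in the Gauss-formula expansion and in the variance computations.
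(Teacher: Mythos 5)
Your plan follows the same two-stage architecture as the paper's proof: first bound the Euclidean derivatives uniformly, $\sup_{x\in\cS^N}\abs{\hat{\nabla}^jH(x)}_{op}\lesssim N^{1-j/2}$ with exponential tails, then pass to covariant derivatives by a Gauss-formula induction (this is exactly the paper's Lemma~\ref{lem:diff-geo-fact}, proved in Appendix~A), and deduce item (2) from item (1) via $\abs{A}_F\le\sqrt N\abs{A}_{op}$, exactly as the paper does. The one genuine divergence is how the uniform Euclidean bound is obtained: the paper applies Borell's inequality and then a Sudakov--Fernique comparison to decouple the supremum over $x$ from the supremum over test directions, reducing to the ground state of a $(p-j)$-spin Hamiltonian plus the injective norm of an i.i.d.\ Gaussian $j$-tensor (the latter handled by the net argument of Lemma~\ref{lem:gaussian-tensor-norm-bound}), whereas you propose a single net/chaining argument over the product $\cS^N\times(\bS^{N-1}(1))^j$. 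Your route is viable: by homogeneity, $\hat{\nabla}^jH(x)(X_1,\dots,X_j)$ is, up to a $p,j$-dependent constant and the $N^{-(p-1)/2}$ normalization, a contraction of the fixed Gaussian $p$-tensor against $p-j$ copies of $x/\sqrt N$ and the unit $X_i$, so the product-space supremum reduces to an injective-norm bound and multilinearity absorbs the off-net error; what the comparison step buys the paper is cleaner constants (polynomial in $p$, via known ground-state estimates) and no separate Lipschitz-in-$x$ control. Two points you should tighten: your intermediate claim that the variance of $\langle D^jH(x),v_1\otimes\cdots\otimes v_j\rangle$ is $O(N)$, making the field $O(\sqrt N)$ pointwise, is false for $j\ge1$ --- the variance is $O(N^{1-j})$, which is precisely what yields the $N^{1-j/2}$ supremum bound you correctly settle on afterwards --- and in the Gauss-formula induction the per-term estimate must carry the explicit $N^{-(\ell-j)/2}$ prefactors multiplying $\abs{D^jH}_{op}$ rather than a bare maximum over $j\le\ell$, since it is these prefactors (recorded in the $A_{\ell,k}$ expansion of Lemma~\ref{lem:diff-geo-fact}), not the maximum, that make every correction term scale as $N^{1-\ell/2}$ and hence keep $\norm{H}_{\cG^k}=O(N)$.
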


In the present paper, Theorem~\ref{thm:reg} is only applied with $k\leq 4$; however, we prove it in this greater generality as it can be applied to deduce tightness of a much greater family of observables, and proves relevant in other settings, e.g.,~\cite{BAGJ18b}.  
\begin{remark}
We note here that by an application of Borell's inequality, and the preceding estimates,
the same bound also holds for the so called ``mixed $p$-spin models", i.e.,
for Hamiltonians of the form $H=\sum a_p H_p(x)$ where $\sum a^2_p 2^p<\infty$,
except the constants now depend on the sequence $(a_p)_p$ instead of $p$ (and, of course, the polynomial 
dependence is lost).
\end{remark}

It will also be essential to control certain spectral properties of the Hessian. 
Recall that the restriction of the Euclidean Hessian to $T\mathcal S^N$, denoted $G=G_{N,p}$, is distributed as $C_p \cdot M$ where $M$ is drawn from the 
Gaussian orthogonal ensemble (GOE) normalized such that its limiting spectrum is supported on $[-\sqrt 2,\sqrt 2]$ (see, e.g., \cite{ABC13}).
We then have the following uniform estimates on $G$. Let $E_{\mu_{SC}}$ be the expectation with respect to the semicircle law, $\mu_{SC}$,
 supported on $[-\sqrt 2,\sqrt 2]$.

\begin{theorem}\label{thm:GOE-reg}
There exist $f(p,c),g(p,c),h(p,c)>0$ such that for every $\delta>0$, and every $c>0$,
\begin{align}
\mathbb P\bigg( \sup_{x\in \mathcal S^N} \big|\tr G(x) \big| \geq cN^{\frac 12 +\delta}\bigg ) &\lesssim  \exp(-f(c)N^{1+2\delta})\,, \label{eq:tr-goe}\\
\mathbb P\bigg(\sup_{x\in \mathcal S^N} \big|\tr G^2(x)-C_p^2 N E_{\mu_{SC}}[\lambda^2]\big| \geq cN^{\frac 12+\delta}\bigg) &\lesssim \exp(-g(c) N^{1+2\delta})\,, \label{eq:tr-goe-squared}\\
\mathbb P\bigg(\sup_{x \in \mathcal S^N} \abs{\nabla \tr (G(x))}  \geq cN^{\delta}\bigg) &\lesssim \exp(-h(c)N^{1+2\delta})\,. \label{eq:tr-goe-lip}
\end{align}
\end{theorem}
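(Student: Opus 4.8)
We reduce each of \eqref{eq:tr-goe}, \eqref{eq:tr-goe-squared} and \eqref{eq:tr-goe-lip} to a uniform-in-$x$ concentration statement for a functional of the disorder $(J_{\vec i})$ of chaos order at most two, by first isolating a sharp bound on the relevant variance proxy and then invoking Gaussian (or second-chaos) concentration. The reductions are algebraic: writing $\nabla_{\mathrm e},\nabla^2_{\mathrm e},\Delta_{\mathrm e}$ for Euclidean derivatives, $\hat x=x/\sqrt N$, and using the homogeneity relations $x^{\top}\nabla_{\mathrm e}H=pH$ and $\nabla^2_{\mathrm e}H\cdot x=(p-1)\nabla_{\mathrm e}H$, cyclicity of the trace gives
\begin{equation}\label{eq:trG-reduction}
\tr G(x)=\Delta_{\mathrm e}H(x)-\hat x^{\top}\nabla^2_{\mathrm e}H(x)\hat x=\Delta_{\mathrm e}H(x)-\tfrac{p(p-1)}{N}H(x).
\end{equation}
Thus $\tr G$ is the Euclidean Laplacian of a degree-$p$ polynomial minus a multiple of $H/N$; in particular it is a centered Gaussian field whose restriction to $\cS^N$ is a combination of spherical harmonics of degree $\le p-2$, and differentiating \eqref{eq:trG-reduction} (and $\tr G^2=\abs{G}_F^2$) expresses $\nabla_{\cS^N}\tr G$ and $\nabla_{\cS^N}\tr G^2$ through traced Euclidean derivatives of $H$ of order $\le 4$, which is what lets us feed in the $\cG^k$-bounds of Theorem~\ref{thm:reg} for $k\le 4$. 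Finally, by rotational invariance of the model, at each fixed $x$ the matrix $G(x)$ is a GOE on $T_x\cS^N$, so $\E[\tr G(x)]=0$ and $\E[\tr G^2(x)]=C_p^2NE_{\mu_{SC}}[\lambda^2]+O_p(1)$.

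The decisive inputs are three deterministic estimates, uniform over $x\in\cS^N$: (a) $\operatorname{Var}(\tr G(x))\lesssim_p 1$; (b) writing the nonnegative quadratic form $\tr G^2(x)=\langle J,Q(x)J\rangle$, one has $\abs{Q(x)}_{op}\lesssim_p N^{-1}$, hence also $\abs{Q(x)}_F^2\le\abs{Q(x)}_{op}\tr Q(x)\lesssim_p 1$; (c) writing $\nabla_{\cS^N}\tr G(x)=A(x)J$, the worst directional-derivative variance obeys $\abs{A(x)}_{op}^2=\sup_{\abs u=1}\operatorname{Var}\langle\nabla_{\cS^N}\tr G(x),u\rangle\lesssim_p N^{-1}$, hence $\abs{A(x)}_F^2\le(N-1)\abs{A(x)}_{op}^2\lesssim_p 1$. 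Each of these is a direct covariance computation; the content is the gain over the naive bounds — e.g.\ in (b) the naive bound gives only $\abs{Q(x)}_{op}\le\tr Q(x)=\Theta_p(N)$. This gain comes from the combinatorial fact that each coefficient $J_{\vec i}$ enters at most $O_p(1)$ of the second partials $\partial_a\partial_b H$, so the coefficient vectors of the entries $G_{ab}(x)$ are nearly orthogonal and Cauchy--Schwarz controls $\abs{Q(x)}_{op}$; in (c) one uses additionally that the otherwise-dominant contribution to $\operatorname{Var}\langle\nabla_{\cS^N}\tr G(x),u\rangle$ is proportional to $\langle u,x\rangle^2$ and hence vanishes on $T_x\cS^N$. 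Establishing (a)--(c) uniformly in $x$, with the right powers of $N$ and polynomial-in-$p$ constants, is the bulk of the argument.

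With (a)--(c) in hand, \eqref{eq:tr-goe} and \eqref{eq:tr-goe-lip} follow from the Borell--TIS inequality applied to the Gaussian suprema $\sup_x\abs{\tr G(x)}$ and $\sup_{x,\abs u=1}\langle\nabla_{\cS^N}\tr G(x),u\rangle$: their variance proxies are $\lesssim_p 1$ and $\lesssim_p N^{-1}$ by (a) and (c), while their expected suprema are $\lesssim_p\sqrt N$ and $\lesssim_p 1$ — the first by a Gaussian comparison of $\Delta_{\mathrm e}H$ against a constant multiple of the $(p-2)$-spin Hamiltonian (whose extreme value is $O(N)$), the second since $\abs{\nabla_{\cS^N}\tr G(x)}^2$ concentrates uniformly about $\abs{A(x)}_F^2=\Theta_p(1)$. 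Since $cN^{1/2+\delta}$, resp.\ $cN^{\delta}$, exceeds twice the corresponding expectation for $N$ large, Borell--TIS gives probability $\le\exp(-\Theta_p(c^2)N^{1+2\delta})$. For \eqref{eq:tr-goe-squared} we apply Gaussian concentration to $J\mapsto\sup_x\sqrt{\tr G^2(x)}$ and $J\mapsto\inf_x\sqrt{\tr G^2(x)}$, which are $\lesssim_p N^{-1/2}$-Lipschitz by (b); using $\E\sup_x\sqrt{\tr G^2(x)}=(C_p^2NE_{\mu_{SC}}[\lambda^2])^{1/2}+o_p(N^{\delta})$ — which follows from the chaining estimate $\E\sup_x\abs{\tr G^2(x)-C_p^2NE_{\mu_{SC}}[\lambda^2]}=O_p(\sqrt{N\log N})$, itself not circular because (b) makes the centered field sub-Gaussian with proxy $O_p(1)$ up to scale $\Theta(N)$ — a deviation of $\tr G^2$ by $cN^{1/2+\delta}$ forces a deviation of $\sqrt{\tr G^2}$ by $\gtrsim_p cN^{\delta}$, and one more application of the concentration inequality yields \eqref{eq:tr-goe-squared}. (Alternatively, all three estimates follow from a polynomially-fine net of $\cS^N$, of cardinality $e^{O(N\log N)}$ and hence absorbed by the pointwise exponents, with pointwise Gaussian, Hanson--Wright, and $\chi^2$ tails coming from (a)--(c) and oscillations between net points controlled by $\norm{H}_{\cG^4}$ via Theorem~\ref{thm:reg}.)

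The main obstacle is therefore the middle paragraph, items (a)--(c): showing, uniformly over the sphere, that the cancellations in $\tr G$, in $\nabla_{\cS^N}\tr G$, and in the quadratic form $\tr G^2$ are as strong as the GOE heuristic predicts. A secondary difficulty, needed along the Borell--TIS route to \eqref{eq:tr-goe-squared}, is the chaining control of $\E\sup_x\abs{\tr G^2(x)-\E\tr G^2(x)}$, whose non-circularity again rests on the operator-norm bound (b).
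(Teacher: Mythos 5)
Your proposal is correct in substance but organized quite differently from the paper. The paper never works with the disorder-space forms directly: it proves pointwise concentration at a single point $\mathbf n$ using random-matrix facts (the trace of a GOE, the Wishart structure of $G^2(\mathbf n)$ via Guionnet--Zeitouni-type concentration, and norm concentration for the Gaussian vector $\nabla\tr G(\mathbf n)$), proves a uniform modulus-of-continuity bound for $\tr G$, $\tr G^2$, $|\nabla\tr G|^2$ from the $\cG^k$-estimates of Theorem~\ref{thm:reg} (via $|\nabla^k\tr G|\le N|\nabla^{k+2}H|+p|\nabla^kH|$), and then combines the two through a covering lemma (Lemma~\ref{lem:sphere-union-bound}) with an $e^{O(N\log N)}$ net, which is absorbed by the $e^{-cN^{1+2\delta}}$ pointwise tails. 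Your primary route instead attacks the suprema globally: the exact reduction $\tr G=\Delta_{\mathrm e}H-\tfrac{p(p-1)}{N}H$ is correct, and your items (a)--(c) are true with the stated powers of $N$ (e.g.\ (b) follows since $\sup_{|w|=1}|Q(x)|_{op}$ is $\sup_w|G_w(x)|_F^2$ over unit disorder vectors, which Cauchy--Schwarz bounds by $O_p(N^{-1})$ on the sphere), and with them Borell--TIS, the Lipschitz/Hanson--Wright bounds, and the $\sqrt{\cdot}$-linearization argument for \eqref{eq:tr-goe-squared} deliver the claimed rates. What your route buys is avoiding the explicit net lemma; what it costs is the extra expected-supremum inputs, and these are the places where your write-up is thinnest: (i) the bound $\E\sup_x|\nabla\tr G|=o(N^{\delta})$ is justified by asserting uniform concentration of $|\nabla\tr G|^2$ about $|A(x)|_F^2$, which is uncomfortably close to the statement being proved --- it should instead be obtained by Dudley/chaining from the $O_p(N^{-1})$ directional variances in (c) (a crude net already gives $O(\sqrt{\log N})$, which suffices), or by comparing $\Delta_{\mathrm e}H$ with $N^{-1/2}H_{p-2}$ at the level of derivative fields, where the $O(1/N)$ covariance corrections must be tracked; and (ii) the chaining estimate $\E\sup_x|\tr G^2-\E\tr G^2|=O_p(\sqrt{N\log N})$, which you flag, genuinely needs an increment/entropy argument for the order-two chaos. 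Your parenthetical alternative (pointwise Gaussian, Hanson--Wright and $\chi^2$ tails plus a polynomial net with oscillations controlled by $\|H\|_{\cG^4}$) is essentially the paper's proof and cleanly closes both of these, so nothing in your plan is structurally broken; but note that establishing (a)--(c) uniformly in $x$, which you correctly identify as the bulk of the work, is where the covariance computations actually live, and in the paper these appear instead as Lemma~\ref{lem:GOE-at-a-point} (at a point, by rotational invariance) plus Lemma~\ref{lem:GOE-mod-of-continuity}.
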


\noindent Observe that in~\eqref{eq:tr-goe-squared}, using that $C_p^2 = 2p(p-1)$ and the moments of $\mu_{SC}$, we have
\begin{align*}
\mathbb E[\tr G^2(\mathbf n)] = C_p^2 N E_{\mu_{SC}} [ \lambda^2] = p(p-1) N\,.
\end{align*}

As the proofs of Theorem~\ref{thm:reg} and~\ref{thm:GOE-reg} are technical and independent of the rest of the paper, readers interested primarily in the main implications of the paper can freely skip to Section~\ref{sec:diff-ineq}.

\subsection{Some facts regarding the geometry of $\cS^N$}
In the rest of this section, we will heavily use certain elementary
facts regarding the geometry of $\cS^N$. We recount them here.

We begin  by observing the following estimate on the covering number of $\cS^N$.
For a set $A\subset\R^N$
equipped with the Euclidean metric, let  $\mathcal N(A, r)$ 
be the minimum number of Euclidean balls of radius $r$ needed to cover $A$. Recall (see, e.g., \cite[Lemma 5.1]{Vershynin})
that
\begin{equation}\label{eq:covering-number-sphere}
\cN( \bS^{N-1}(1), r) \leq \left( \frac{4}{r}\right)^N\,.
\end{equation}
In particular, observe that it takes $C^N \delta_N^{-N/2}$ balls of the form $B_x= \{y:\frac 1N \sum_i x_i y_i \geq 1-\delta_N\}$ to cover the sphere $\mathcal S^N$ for some universal $C>0$.

We now remind the reader of the following results regarding
the differential geometry of $\cS^N$.
Recall that the  Ricci tensor of $\cS^N$ satisfies
\begin{equation}\label{eq:ricci}
\Ric_{\cS_N} = (1-\frac{1}{N})Id\,,
\end{equation}
where $Id$ is the identity on $T\cS^N$.  As a consequence, Bochner's formula in this setting
reads:
for any $u\in C^\infty(\cS^N)$,
\begin{equation}\label{eq:bochner}
\frac{1}{2}\Delta\abs{\nabla u}^2 = \abs{\nabla^2 u}_F^2 + \langle \nabla\Delta u,\nabla u\rangle +(1-\frac{1}{N}) \g{\nabla u,\nabla u}\,.
\end{equation}

The principle curvatures of $\cS^N$ are all equal, 
$\kappa_i=\kappa$, with $\kappa =\pm1/\sqrt{N}$, where the choice of sign depends on the choice of normal vector.
 Let us work with the convention that $\kappa$ is negative (an ``outward'' facing normal).
 Then the second fundamental form satisfies 
\begin{equation}\label{eq:second-fund-form}
II(X,Y)= \frac{1}{\sqrt{N}}\g{X,Y}P\,,
\end{equation}
where $P$ is the ``outward'' normal direction. As $H$ is a degree $p$ homogeneous function, $P H= p H/{\sqrt N} $. Consequently, the Euclidean Hessian of $H$, restricted to $T\cS^N$, can be expressed as 
\begin{equation}\label{eq:g-def}
\begin{aligned}
G(x) &= \nabla^2 H(x) + II(\cdot,\cdot) H(x), \\
&=\nabla^2 H(x) + p\frac{H(x)}{N} Id\,,
\end{aligned}
\end{equation}
where $Id$ is the identity (i.e., the metric tensor). Going further, we note the following representation of higher covariant derivatives of $H$. 
Let $Sym_{k}$ denote the symmetric group of the set $[k]$ and let $\hat \nabla$ be the Euclidean derivative.
\begin{lemma}\label{lem:diff-geo-fact}
Let $f\in C^{\infty}(\cS^{N})$, $x\in\cS^{N}$ and let $\{X_{i}\}_{i=1}^{k}\subset T_{x}\cS^{N}$.
For each $k\geq 2$ and $\ell \leq k$, there exist functions $c_{\ell,k}:Sym_{2k-\ell}\to\Z$
such that
\begin{equation}\label{eq:kth-deriv}
\nabla^{k}f(X_{1},\ldots,X_{k})=\sum_{\ell=1}^{k}\sum_{\sigma\in Sym_{2k-\ell}}c_{\ell,k}(\sigma)A_{\ell,k}(\sigma)\,,
\end{equation}
where 
\[
A_{\ell,k}(\sigma)=\frac{1}{N^{(k-\ell)/2}}\hat{\nabla}^{\ell}f(W_{\sigma(1)}^{\ell,k},\ldots,W_{\sigma(\ell)}^{\ell,k})\cdot\left\langle W_{\sigma(\ell+1)}^{\ell,k},W_{\sigma(\ell+2)}^{\ell,k}\right\rangle \cdots\left\langle W_{\sigma(2k-\ell+1)}^{\ell,k},W_{\sigma(2k-\ell)}^{\ell,k}\right\rangle \,,
\]
and $W^{\ell,k}_i = X_i$ for $i \leq k$ and $W^{\ell,k}_i = P$ for $k+1 \leq i \leq 2k-\ell$. 
\end{lemma}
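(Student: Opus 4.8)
\textbf{Proof proposal for Lemma~\ref{lem:diff-geo-fact}.} The plan is to induct on $k$, using the Gauss formula relating the Levi-Civita connection of $\cS^N$ to the flat connection of $\R^N$ via the second fundamental form. The base case $k=2$ is essentially \eqref{eq:g-def}: for tangent vectors $X_1,X_2$ we have $\nabla^2 f(X_1,X_2) = \hat\nabla^2 f(X_1,X_2) + \frac{1}{\sqrt N}\langle X_1,X_2\rangle \, Pf$, and since we only apply this to functions (like $H$) for which one controls $Pf$ and its derivatives separately, this is exactly of the claimed form with $\ell \in \{1,2\}$, the single $\ell=1$ term carrying the factor $N^{-1/2}$ and one inner product $\langle X_1,X_2\rangle$ together with $\hat\nabla^1 f$ evaluated at a copy of $P$.

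For the inductive step, I would write $\nabla^{k+1} f(X_0,X_1,\dots,X_k)$ as $X_0\big(\nabla^k f(X_1,\dots,X_k)\big)$ minus correction terms $\sum_{j} \nabla^k f(X_1,\dots,\nabla_{X_0}X_j,\dots,X_k)$, where the $X_i$ are extended to local vector fields. The key point is that one may choose these extensions so that $\nabla_{X_0}X_j = -\mathrm{II}(X_0,X_j)P = -\frac{1}{\sqrt N}\langle X_0,X_j\rangle P$ at $x$ — i.e., extensions that are parallel in the ambient sense and whose covariant derivative along the surface is purely normal, as in the Gauss formula \eqref{eq:second-fund-form}. Plugging the inductive hypothesis for $\nabla^k f$ into both pieces: differentiating $A_{\ell,k}(\sigma)$ by $X_0$ either falls on the flat derivative factor $\hat\nabla^\ell f$, raising $\ell \to \ell+1$ and adding $X_0$ as a new tangent argument (and since $k\to k+1$, the power $N^{-(k-\ell)/2}$ becomes $N^{-(k+1-(\ell+1))/2}$, consistent), or falls on one of the inner-product factors $\langle W^{\ell,k}_a, W^{\ell,k}_b\rangle$, which produces $\langle \nabla_{X_0} W_a, W_b\rangle$; each such term, by the choice of extension, is either zero (when $W_a = P$, since $\nabla_{X_0}P$ is tangential and can be absorbed) or equals $-\frac1{\sqrt N}\langle X_0, W_a\rangle\langle P, W_b\rangle$, which again matches the template after relabeling $P$'s. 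The correction terms $\nabla^k f(\dots,\nabla_{X_0}X_j,\dots) = -\frac1{\sqrt N}\langle X_0,X_j\rangle\, \nabla^k f(X_1,\dots,P,\dots,X_k)$ feed a lower-order ($\ell\leq k$, hence $\leq k$, with appropriate power of $N^{-1/2}$) term with one extra inner product $\langle X_0,X_j\rangle$ and one tangent slot replaced by $P$ — exactly the shape allowed on the right of \eqref{eq:kth-deriv}. Collecting all contributions and tracking the combinatorics of which slots among $2(k+1)-\ell$ get matched, the coefficients $c_{\ell,k+1}(\sigma)$ are integer-valued (they arise from integer multiplicities of the Gauss-formula substitutions), which closes the induction. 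One should also check that $\langle P,P\rangle=1$ and $PH = pH/\sqrt N$ let one treat the normal-direction evaluations uniformly, but for the statement as given (a purely algebraic identity for general $f$) one just keeps $P$ as a formal unit normal argument.

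The main obstacle, I expect, is bookkeeping rather than conceptual: one must verify that differentiating an $\ell$-th flat derivative term plus its web of $(k-\ell)$ inner-product contractions, under the Gauss substitution, generates \emph{only} terms with $2(k+1)-\ell'$ vector arguments for $\ell' \in\{\ell,\ell+1\}$ and never, say, terms with a stray factor of $N^{-1/2}$ in the wrong power or a non-integer coefficient. The cleanest way to organize this is to phrase $\nabla^k f$ at $x$ abstractly as a sum over set-partition-type data of the arguments $\{X_1,\dots,X_k\}$ into "derivative slots" and "contracted-with-$P$ slots", show this class of expressions is closed under the operation $\nabla^{k}\mapsto \nabla^{k+1}$ described above, and let $c_{\ell,k}$ record the resulting integer multiplicities; the precise values of $c_{\ell,k}$ are irrelevant for the applications (only their integrality and finiteness matter), so I would not attempt to compute them.
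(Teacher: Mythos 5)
Your overall strategy---induction on $k$, with the Gauss formula \eqref{eq:second-fund-form} and the Weingarten relation $\hat{\nabla}_Y P = N^{-1/2}Y$ producing either an $\ell\to\ell+1$ bump in the ambient derivative or a factor $N^{-1/2}$ together with an extra inner product and a $P$-slot---is the same as the paper's, and your bookkeeping for the terms coming from differentiating $\hat{\nabla}^{\ell}f$ and the inner-product factors is essentially right. But the inductive step as you set it up contains a genuine error: you claim one may choose extensions of the $X_j$ so that $\nabla_{X_0}X_j = -II(X_0,X_j) = -N^{-1/2}\langle X_0,X_j\rangle P$ at $x$. This is impossible, since $\nabla$ is the intrinsic Levi--Civita connection of $\cS^N$, so $\nabla_{X_0}X_j$ is a \emph{tangent} vector, while the right-hand side is normal; the Gauss formula for a tangential extension reads $\hat{\nabla}_{X_0}X_j = \nabla_{X_0}X_j + II(X_0,X_j)$, so the best you can arrange is $\nabla_{X_0}X_j=0$ at $x$ with $\hat{\nabla}_{X_0}X_j$ purely normal (e.g.\ extend $X_j$ by tangentially projecting a constant field, or by parallel transport along radial geodesics). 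Consequently your correction terms $\nabla^{k}f(X_1,\dots,\nabla_{X_0}X_j,\dots,X_k) = -N^{-1/2}\langle X_0,X_j\rangle\,\nabla^{k}f(X_1,\dots,P,\dots,X_k)$ are not legitimate: $\nabla^{k}f$ is a section of $(T^{*}\cS^N)^{\otimes k}$ and only accepts tangent arguments, and such terms are in any case not covered by the inductive hypothesis \eqref{eq:kth-deriv}, which is stated only for tangent $X_i$ (on its right-hand side it is the \emph{Euclidean} tensor $\hat{\nabla}^{\ell}f$ that receives $P$-arguments). So the induction as written does not close. (A smaller slip of the same kind: when the derivative falls on an inner-product factor with $W_a=P$, the term is not zero; by Weingarten it equals $N^{-1/2}\langle X_0,W_b\rangle$, which happens to fit the template anyway.)

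The fix is short and brings you back to the paper's computation. Either choose the extensions so that $\nabla_{X_0}X_j=0$ at $x$, so the correction terms vanish identically and $\nabla^{k+1}f(X_0,\dots,X_k)=X_0\bigl(\nabla^{k}f(X_1,\dots,X_k)\bigr)$ at $x$; then differentiating each $A_{\ell,k}(\sigma)$, using $\hat{\nabla}_{X_0}X_j = \pm N^{-1/2}\langle X_0,X_j\rangle P$ and $\hat{\nabla}_{X_0}P=N^{-1/2}X_0$ (signs absorbed into the integer coefficients $c_{\ell,k+1}$), produces exactly the allowed terms. Or, as in the paper, differentiate the right-hand side of \eqref{eq:kth-deriv} as a covariant tensor: the metric (inner-product) factors are parallel, so only $\hat{\nabla}^{\ell}f(W_{\sigma(1)},\dots,W_{\sigma(\ell)})$ needs differentiating, with the Gauss formula handling the tangential slots and the Weingarten equation the $P$-slots. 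Either way the normal vector enters only through the ambient tensor $\hat{\nabla}^{\ell}f$ and the inner products, never through the intrinsic $\nabla^{k}f$, and the powers of $N^{-1/2}$ and integrality of the coefficients come out as you describe.
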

The proof of Lemma~\ref{lem:diff-geo-fact} goes by induction, using the above facts---as it is cumbersome and not relevant to the rest of the paper, we defer the proof of this representation to Appendix~\ref{app:diff-geo}.

\subsection{Proof of Item 1 of \prettyref{thm:reg}}

Before turning to the proof of Item 1 of Theorem~\ref{thm:reg}, 
note the following estimate regarding injective tensor norms of i.i.d.\ Gaussian tensors; here $g_{i_1,...,i_k}$ are each i.i.d.\ standard Gaussian random variables. 
(In fact, Lemma~\ref{lem:gaussian-tensor-norm-bound} only requires that $g_{i_1,...,i_k}$ have sub-Gaussian tails.)

\begin{lemma}\label{lem:gaussian-tensor-norm-bound}
Let $A=(g_{i_1,\ldots,i_k}/N^{(k-1)/2})_{i_1,...,i_k}$ be an i.i.d.\ Gaussian $k$-tensor on $\R^N$.
For every $k\geq 1$, there exists $C(k)>0$ such that the injective tensor norm of $A$  satisfies
\[
\E\bigg[\sup_{X_1,\ldots,X_k\in \bS^{N-1}(1)} A(X_1,\ldots, X_k)\bigg] \leq C N^{1-(k/2)}
\]
\end{lemma}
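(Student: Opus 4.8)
The plan is to bound the expected injective tensor norm of $A$ via a chaining/net argument combined with Gaussian concentration, which is the standard route for such bounds. First I would set $M(A) = \sup_{X_1,\ldots,X_k \in \bS^{N-1}(1)} A(X_1,\ldots,X_k)$. Note that $M$ is the supremum of a Gaussian process indexed by the compact set $(\bS^{N-1}(1))^k$, and the map $(X_1,\ldots,X_k) \mapsto A(X_1,\ldots,X_k)$ is, for fixed $A$, a multilinear form. The key scaling observation is that writing $g = (g_{i_1,\ldots,i_k})$ for the unnormalized i.i.d.\ Gaussian tensor, we have $M(A) = N^{-(k-1)/2}\, \widetilde M(g)$ where $\widetilde M(g) = \sup_{X_j \in \bS^{N-1}(1)} \sum g_{i_1,\ldots,i_k} X^1_{i_1}\cdots X^k_{i_k}$, so it suffices to show $\E[\widetilde M(g)] \lesssim_k \sqrt{N}$.

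The main step is the $\epsilon$-net argument. Fix $\epsilon = 1/2$ (say) and let $\cN$ be a $1/2$-net of $\bS^{N-1}(1)$ in Euclidean distance with $|\cN| \leq 8^N$ by \eqref{eq:covering-number-sphere}. A standard discretization lemma for multilinear forms gives $\widetilde M(g) \leq c_k \max_{(Y_1,\ldots,Y_k) \in \cN^k} \langle g, Y_1 \otimes \cdots \otimes Y_k\rangle$ for a constant $c_k$ depending only on $k$ (one replaces each $X_j$ by its nearest net point one coordinate at a time, absorbing the error using multilinearity and the fact that $\widetilde M(g) = \sup$ over all unit vectors, not just net points; the geometric series in $\epsilon$ converges since $\epsilon < 1$). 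For each fixed tuple $(Y_1,\ldots,Y_k)$ in the net, $\langle g, Y_1 \otimes \cdots \otimes Y_k \rangle$ is a centered Gaussian with variance $\prod_j |Y_j|^2 = 1$. Applying the standard maximal inequality for a finite collection of $N(\epsilon)$-many standard Gaussians, $\E[\max] \lesssim \sqrt{\log(|\cN|^k)} = \sqrt{k N \log 8} \lesssim_k \sqrt{N}$. Combining, $\E[\widetilde M(g)] \leq c_k \cdot C\sqrt{N}$, and multiplying by $N^{-(k-1)/2}$ yields $\E[M(A)] \leq C(k) N^{1 - k/2}$, as claimed.

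The step I expect to require the most care is the discretization inequality for multilinear forms, i.e.\ reducing the supremum over $(\bS^{N-1}(1))^k$ to the supremum over the net $\cN^k$ at the cost of only a dimension-free constant $c_k$. For $k=1$ this is the classical fact that the operator norm of a vector on the sphere is comparable to its maximum over a net; for general $k$ one peels off one argument at a time: writing $X_j = Y_j + (X_j - Y_j)$ with $Y_j \in \cN$ and $|X_j - Y_j| \leq 1/2$, multilinearity gives a sum of $2^k$ terms, the ``main'' term evaluated at net points and $2^k - 1$ ``error'' terms each of which, after rescaling the small vectors to unit length, is bounded by $2^{-m}\widetilde M(g)$ for $m \geq 1$ the number of error slots; summing the geometric series and solving for $\widetilde M(g)$ gives $\widetilde M(g) \leq (1 - \sum_{m\geq 1}\binom{k}{m}2^{-m})^{-1}\max_{\cN^k}\langle g, \cdot\rangle$ provided $\epsilon$ is chosen small enough (e.g.\ $\epsilon$ with $\sum_{m\geq 1}\binom{k}{m}\epsilon^m < 1$, i.e.\ $(1+\epsilon)^k < 2$, so $\epsilon = 2^{1/k}-1$), which only changes $|\cN|$ to $(C/\epsilon)^N = C(k)^N$ and hence the constant $c_k$ and the logarithmic factor $\sqrt{k N \log C(k)}$, still $\lesssim_k \sqrt{N}$. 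The sub-Gaussian remark follows identically since the only probabilistic input is the maximal inequality for finitely many sub-Gaussian variables.
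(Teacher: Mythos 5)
Your proposal is correct and follows essentially the same route as the paper: an $\epsilon$-net of $\bS^{N-1}(1)$, a discretization step using multilinearity to pass from the supremum over $(\bS^{N-1}(1))^k$ to a maximum over net points (the paper uses the linear error bound $\epsilon k\abs{A}$ with $\epsilon\leq 1/(2k)$, you use the binomial expansion with $(1+\epsilon)^k<2$, which is equivalent in effect), and the Gaussian bound for each fixed tuple of net points. The only cosmetic difference is that the paper obtains the expectation by integrating a tail bound from Gaussian concentration plus a union bound, while you invoke the maximal inequality for finitely many (sub-)Gaussian variables directly.
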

\begin{proof}
Let $\Sigma= \Sigma_\epsilon$ be an $\epsilon$-net of $\mathbb S^{N-1}(1)$ for an $\epsilon$ to be determined and let $\Sigma^k$ be its $k$-fold Cartesian product. 
By multi-linearity of $A$ and the triangle inequality, 
\begin{align*}
|A| \leq \sup_{\Sigma^k} A(X_1,...,X_k) + \epsilon k |A|\,,
\end{align*}
where we recall that $|A|$ denotes the operator norm of $A$. Then if $\epsilon \leq 1/(2k)$, we have 
\[
\prob( \abs{A} \geq \lambda)\leq \prob\left(\bigcup_{\Sigma^k} \left\{ \abs{A(X_1,\ldots,X_k)} \geq \lambda/2\right\}\right)\,.
\]
 For any $X_1,\ldots,X_k$ unit vectors, 
\[
\E [ A(X_1,\ldots,X_k)^2 ]=\frac{1}{N^{k-1}}\,,
\]
by definition of $A$, so that by Gaussian concentration,
\[
\prob\left(  \abs{A(X_1,\ldots,X_k)} \geq \lambda N^{1-(k/2)}\right) \leq \exp(-N^{2-k} \lambda^2/(2 N^{1-k})) \leq \exp (- \lambda^2 N/2)\,.
\]
By isotropy and a union bound over $\Sigma^k$ (bounding $|\Sigma^k|$ by~\eqref{eq:covering-number-sphere}), we see that 
\begin{align*}
\mathbb P(\abs{A} \geq \lambda N^{1-(k/2)}) \leq k(4\epsilon^{-1})^{N} e^{-\lambda^2 N/8}\,.
\end{align*}
The upper bound on $\mathbb E[ \abs{A}]$ then follows by integration.
\end{proof}

With these preliminaries in hand, we are in position to prove Theorem~\ref{thm:reg}.
 Observe that $H$ is almost surely smooth and for $x,y\in \cS^N$,
\begin{align*}
\mathbb E[H(x)H(y)]=N R^p (x,y)
\end{align*}
where $R(x,y)=\frac 1N \sum_{i} x_i y_i$ is the normalized overlap of $x$ and $y$.
\color{black}

\subsection*{\textbf{{Proof of Theorem~\ref{thm:reg} }}}
Since the Frobenius norm and operator norm of a matrix 
always satisfy $\abs{A}_F\leq \sqrt{N}\abs{A}$ for any $A$, the second item is an immediate consequence of the first.

The proof of the first item is in two steps. 
First, we bound the operator norm of the Euclidean derivatives (denoted $\hat \nabla ^k$) 
viewed as operators on the bundle $T\cS^N\oplus N\cS^N$ (where $N\cS^N$ is the normal bundle), via standard Gaussian comparison inequalities. 
We then prove the bounds on the $\cG^k$ norms for all $k$, inductively using the corresponding bounds on the Euclidean derivatives.

\begin{step}
For every $k$, there exists $K_0(p,k)>0$ and $c(p,k,K)>0$ such that for every $K> K_0$,  
\begin{equation}\label{eq:euclidean-deriv-bound}
\mathbb P\bigg(\sup_{x\in \mathcal S^N} \sup_{\substack{X_1,\ldots,X_k\in T_x \R^N\\ \norm{X_i}=1} } \hat{\nabla}^kH(x)(X_1,\ldots,X_k) \geq K N^{1-k/2}\bigg) \lesssim \exp(- cN)\,.
\end{equation}
In particular, $K_0(p,k)$ has at most polynomial growth in $p$.
\end{step}
\begin{proof}
Clearly it suffices to consider $k\leq p$. Let $E=\cS^N\times (\bS^{N-1}(1))^k$ and consider the field $\psi:E\to\R$ defined by
\[
\psi(x,X_1,\ldots,X_k) = \frac{1}{N^{(p-1)/2}} \sum_{i_1,...,i_p} J_{i_1,\ldots,i_p} X^{i_1}_1\cdots X^{i_k}_k\cdot x_{i_{k+1}}\cdots x_{i_p}\,.
\]
Observe that as Gaussian processes on $E$,
\[
 \hat\nabla^kH(x)(X_1,\ldots,X_k)\eqdist C_{p,k} \psi(x,X_1,\ldots X_k)\,,
\]
for some constants $C_{p,k}$ which are of at most polynomial growth in $p$.
Thus it suffices to estimate  $\sup_E \psi$, which we do as follows.

The covariance of $\psi$ satisfies for every $(x,X_1,...,X_k), (x',X_1',...,X_k') \in E$,
\begin{equation}
\begin{aligned}
\cov(\psi(x,X_1,\ldots,X_k),\psi(x',X'_1,\ldots, X'_k)) &= \E \big[\psi(x,X_1,\ldots,X_k)\psi(x',X'_1,\ldots,X'_k)\big] \\
&= \frac{1}{N^{k-1}} R(x,x')^{p-k} \prod_i  X_i\cdot X_i',
\end{aligned}
\end{equation}
where $X\cdot Y$ is the usual Euclidean inner product. 
Evidently, this is uniformly bounded on all of $E^2$ by $N^{-(k-1)}$. Consequently, Borell's inequality implies that
\begin{align}\label{eq:borell-psi}
\frac{1}{N}\log\prob\Big(\mbox{$\sup_E$} \psi \geq \E \big[\mbox{$\sup_{E}$} \psi\big] + u\Big)\leq - u^2/(2N^{-(k-1)})\,.
\end{align}
It remains to bound the expectation $\mathbb E[\sup_{E} \psi]$.

This will follow by Gaussian comparison. Define $\phi_1: \mathcal S^N \to \mathbb R$ and $\phi_2: (\mathbb S^{N-1}(1)) ^k \to \mathbb R$, by
\begin{align*}
\phi_1(x) &= \frac{1}{N^{k/2}} H_{p-k}(x)\,,\\
\phi_2(X_1,\ldots,X_k) &= \frac{1}{N^{(k-1)/2}}\sum_{i_1,\ldots,i_k} g_{i_1,\ldots,i_k} X_1^{i_1}\cdots X_k^{i_k}\,,
\end{align*}
where $H_{p-k}$ is a $(p-k)$-spin Hamiltonian and $(g_{i_1,\ldots,i_k})_{i_1,...,i_k}$ are i.i.d. standard Gaussians.
Observe that, if we let $\mathbf{X} = (X_1,\ldots X_k)$ and $\mathbf X' = (X_1',...,X_k')$ in $ (\bS^{N-1}(1))^k$, 
\begin{align*}
\E \big[(\psi(x,\mathbf{X})-\psi(x',\mathbf{X}'))^2\big] &= \frac{1}{N^{k-1}}\left[  R(x,x)^{p-k} + R(x',x')^{p-k} - 2R(x,x')^{p-k}\prod X_i\cdot X_i'\right]\\
&\leq  \E\big[(\phi_1(x)-\phi_1(x'))^2\big] + \E \big[(\phi_2(\mathbf{X})-\phi_2(\mathbf{X}'))^2\big] \\
& = \mathbb E[ \big( (\phi_1(x)+ \phi_2 (\mathbf X)) - (\phi_1(x')+ \phi_2(\mathbf X'))\big)^2]\,.
\end{align*}
Thus by the Sudakov-Fernique inequality~\cite{LedouxTalagrand},
\begin{align*}
\mathbb E\Big[\sup_{(x,\mathbf X)\in E} \psi(x,\mathbf X)\Big] \leq \mathbb E\Big[ \sup_{(x,\mathbf X) \in E} \phi_1(x)+ \phi_2 (\mathbf X)\Big]\leq \mathbb E\Big[ \sup_{x\in \mathcal S^N} \phi_1(x)\Big] + \mathbb E\Big[ \sup_{\mathbf X \in (\mathbb S^{N-1}(1))^k} \phi_2(\mathbf X)\Big]\,.
\end{align*}
It thus suffices to separately bound these two expectations. 
By Dudley's entropy bound \cite{LedouxTalagrand} and \eqref{eq:covering-number-sphere},
the ground state of the $(p-k)$-spin glass is order $N$, so $\phi_1$ satisfies
\[\E [\mbox{$\sup_{\mathcal S^N}$} \phi_1] \leq C(p) N^{1-(k/2)},\]
where $C(p)$ is of at most polynomial growth in $p$ (for a more precise bound see, e.g., \cite{ABC13}).
The estimate on $\mathbb E[\sup \phi_2]$ follows from \prettyref{lem:gaussian-tensor-norm-bound}.
The result then follows from~\eqref{eq:borell-psi}.
\end{proof}

We now turn to the proof of the main bounds we desire on the sphere.
\begin{step}
It suffices to show the following inductively. For every $k$, there exists a $K_0(p,k)>0$ of at most polynomial growth in $p$ and a $c(p,k,K)>0$ such that for every $K> K_0$, 
\begin{equation}
\mathbb P\bigg(\sup_{x\in \mathcal S^N} \sup_{X_1,\ldots,X_k\in (\bS^{N-1}(1))^k} \nabla^k H (X_1,\ldots,X_k) \geq K N^{1-k/2}\bigg)\lesssim \exp(-cN)\,.
\end{equation}
\end{step}

\begin{proof} 

To bound the case $k=1$, it suffices to bound the operator
norm of $\hat \nabla H$ as an operator on $T\R^N$. Thus that case is complete by \prettyref{eq:euclidean-deriv-bound} with $k=1$.
For every $k\geq 2$, by Lemma~\ref{lem:diff-geo-fact}, it suffices to show that for each $\ell \leq k$, we have the bound 
\begin{align*}
\sup_{X_1,...,X_k:\|X_i\|=1} \sup_{\sigma \in Sym_{2k-\ell}} A_{\ell, k}(\sigma) \leq K(p,k) N^{1-k/2},
\end{align*}
For some $K$ of at most polynomial growth in $p$. This follows from the fact that each of the inner products in $A_{\ell,k}(\sigma)$ are bounded by $1$, and the $\ell$'th Euclidean derivative part of $A_{\ell,k}(\sigma)$ is bounded by $K\cdot N^{1-\ell/2}$ as per~\eqref{eq:euclidean-deriv-bound}; Consequently, each summand in the expansion~\eqref{eq:kth-deriv} is at most some $C_{\ell,k,p} N^{1-k/2}$, implying the desired.
\end{proof}

\subsection{Proof of Theorem~\ref{thm:GOE-reg}}

The proof of Theorem~\ref{thm:GOE-reg} relies on the following general lemma that boosts 
a pointwise concentration estimate to a uniform bound on the sphere via a minimal modulus of continuity bound. {Since we consider isotropic fields, we let $\mathbf n=(\sqrt N,0,...,0)$ be the north pole and stand in as a fixed point on $\cS^N$.}

\begin{lemma}\label{lem:sphere-union-bound}
Suppose that $F$ is an isotropic random field on $\mathcal S^N$ that is $\mathbb P$--a.s.\ continuous and satisfies the following:
\begin{enumerate}
\item There exists $c_1(c)>0$ such that for every $c,\delta>0$,
\begin{align*}
\mathbb P(|F(\mathbf n) - \mathbb E[F(\mathbf n)]| \geq cN^{\frac 12+\delta}) \lesssim \exp(-c_1 N^{1+2\delta})\,.
\end{align*}
\item There exists an $\alpha>0$  and $c_2(c)>0$ such that for every $c,\delta>0$, 
\begin{align*}
\mathbb P\bigg(\sup_{x,y: R(x,y)\geq\color{black} 1-N^{-\alpha}} |F(x)-F(y)| \geq cN^{\frac 12 +\delta}\bigg) \lesssim \exp(-c_2 N^{1+2\delta})\,.
\end{align*}
\end{enumerate}
Then there exists $f(c)>0$ depending on $c_1,c_2$ such that for every $c,\delta>0$ 
\begin{align*}
\mathbb P\bigg( \sup_{x\in \mathcal S^N} |F(x)-\mathbb E[F(\mathbf n)] | \geq cN^{\frac 12+\delta} \bigg) \lesssim \exp(-f(c) N^{1+2\delta})\,.
\end{align*}

\noindent Similarly, if $F$ is almost surely nonnegative and $\mathbb E[F(\mathbf n)] = O(\sqrt{N})$, if we replace condition 2 with
\begin{itemize}
\item[\emph{(2')}] There exists an $\alpha>0$  and $c_2(c)>0$ such that for every $c,\delta>0$, 
 \[\mathbb P\left(\sup_{x,y:R(x,y)\geq 1-\delta_N} |F^2(x)-F^2(y)| \geq c N^{1 +2\delta}\right) \lesssim \exp(-c_2 N^{1+2\delta})\,,\]
 \end{itemize} 
then there exists $f(c)>0$ depending on $c_1,c_2$ such that for every $c,\delta>0$, 
\begin{align*}
\mathbb P\bigg( \sup_{x\in \mathcal S^N} F(x) \geq  c N^{\frac 12+\delta} \bigg) \lesssim \exp(-f(c) N^{1+2\delta})\,.
\end{align*}
\end{lemma}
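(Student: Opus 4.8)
The plan is to reduce the uniform bound on $\cS^N$ to a pointwise estimate at a single reference point $\mathbf n$ by combining the covering number estimate~\eqref{eq:covering-number-sphere} with isotropy and the modulus of continuity hypothesis. First I would fix $c,\delta>0$ and let $\alpha>0$ be the exponent from hypothesis (2), setting $\delta_N = N^{-\alpha}$. Using the remark following~\eqref{eq:covering-number-sphere}, there is a universal $C>0$ and a collection of points $x_1,\ldots,x_M \in \cS^N$ with $M \leq C^N \delta_N^{-N/2} = \exp(O(N\log N))$ such that the balls $B_{x_j} = \{y : R(x_j,y) \geq 1-\delta_N\}$ cover $\cS^N$. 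On any such ball, the modulus of continuity bound in hypothesis (2) controls the oscillation of $F$, so that
\begin{align*}
\sup_{x\in\cS^N} |F(x) - \E[F(\mathbf n)]| \leq \max_{j\leq M} |F(x_j) - \E[F(\mathbf n)]| + \sup_{x,y: R(x,y)\geq 1-\delta_N} |F(x)-F(y)|\,.
\end{align*}

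Next I would estimate each term on the right. For the oscillation term, hypothesis (2) (or (2$'$) in the nonnegative case) gives a bound of the form $cN^{1/2+\delta}$ failing with probability at most $\exp(-c_2 N^{1+2\delta})$. For the max over the net, I would use isotropy: each $F(x_j)$ has the same law as $F(\mathbf n)$, so by hypothesis (1) and a union bound over the $M$ points,
\begin{align*}
\mathbb P\Big(\max_{j\leq M} |F(x_j) - \E[F(\mathbf n)]| \geq cN^{\frac 12+\delta}\Big) \leq M \cdot \exp(-c_1 N^{1+2\delta}) \leq \exp\big(O(N\log N) - c_1 N^{1+2\delta}\big)\,.
\end{align*}
Since $N^{1+2\delta}$ dominates $N\log N$ for any fixed $\delta>0$ and large $N$, this probability is bounded by $\exp(-f(c)N^{1+2\delta})$ for a suitable $f(c)>0$ (absorbing the polynomial-log loss by slightly shrinking the constant). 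Adding the two contributions and adjusting $c$ and $f$ by constant factors yields the first claimed bound.

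For the nonnegative variant with hypothesis (2$'$), the same covering argument applies but one works with $F^2$ on the oscillation term: on each ball $B_{x_j}$, $F(x)^2 \leq F(x_j)^2 + \sup_{R(x,y)\geq 1-\delta_N}|F^2(x)-F^2(y)|$, and since $\E[F(\mathbf n)] = O(\sqrt N)$, hypothesis (1) together with $F(\mathbf n) \geq 0$ gives $F(x_j)^2 \leq (\E[F(\mathbf n)] + cN^{1/2+\delta})^2 = O(N^{1+2\delta})$ off an event of probability $\exp(-c_1 N^{1+2\delta})$. Taking square roots at the end converts the $O(N^{1+2\delta})$ bound on $\sup F^2$ into the desired $O(N^{1/2+\delta})$ bound on $\sup F$.

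The main obstacle is the entropy-versus-concentration balance: the covering number of $\cS^N$ at scale $\delta_N = N^{-\alpha}$ is $\exp(\Theta(\alpha N\log N))$, which is superexponential in $N$, so one must verify that the Gaussian concentration rate $\exp(-c N^{1+2\delta})$ in the hypotheses genuinely beats this entropy. This is where the specific form $N^{1/2+\delta}$ (rather than $N^{1/2}$) of the deviation scale is essential — it provides the extra $N^{2\delta}$ factor in the exponent that swamps the $\log N$ loss from the union bound, which is precisely why the conclusion is stated with the same $N^{1/2+\delta}$ scaling rather than at the natural fluctuation scale. Everything else is routine bookkeeping with constants.
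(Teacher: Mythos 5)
Your proposal is correct and matches the paper's argument essentially step for step: cover $\cS^N$ by $\exp(O(N\log N))$ caps of overlap-radius $1-N^{-\alpha}$, control the maximum over cap centers via isotropy, condition (1), and a union bound (with the entropy $N\log N$ absorbed by the $N^{1+2\delta}$ concentration rate), control the within-cap oscillation via condition (2), and in the nonnegative variant run the same scheme on $F^2$ using condition (2') together with $\E[F(\mathbf n)]=O(\sqrt N)$ before taking square roots. No gaps to report.
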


\begin{proof}
Let $\eta_N = N^{-\alpha}$ for the $\alpha$ given by condition 2.\ Then by \eqref{eq:covering-number-sphere}, there exists a set of points $x_1,...,x_K\in \mathcal S^N$ such that the balls $B_{i} = \{y\in \cS^N: R(x_i,y)\geq 1-\eta_N\}$ cover $\mathcal S^N$ and for some universal $C,c_\star>0$,
\begin{align*}
K \leq C^N N^{\alpha N/2}\leq \exp (c_\star N \log N)\,.
\end{align*} 
For each $i$, condition (1) holds with $\mathbf n$ replaced by $x_i$  by isotropy. Fix any $c>0$ and $\delta>0$; for every $1\leq i \leq K$, define the events 
\begin{align*}
\Gamma^1 &  = \bigg\{ \sup_{x,y:R(x,y)\geq 1-\eta_N} |F(x)-F(y)| <\tfrac 12 cN^{\frac 12 +\delta}\bigg\}\,, \\
\Gamma_i^2 & = \big\{ |F(x_i)-\mathbb E[F(\mathbf n)]| < \tfrac 12 cN^{\frac 12+\delta}\big\}\,.
\end{align*}
By the choice of $\{x_i\}$, 
\begin{align*}
\Big (\Gamma^1 \cap \bigcap_{i\leq K} \Gamma_i^2\Big) \subset \Big\{\sup_{x\in \mathcal S^N} |F(x)-\mathbb E[F(\mathbf n)]| < cN^{\frac 12+\delta}\Big\}\,.
\end{align*}
As a result, by isotropy, a union bound over $1 \leq i \leq K$, and conditions 1--2, for $c_1 = c_1(c/2)$ and $c_2 = c_2(c/2)$, we obtain for every $\delta>0$, 
\begin{align*}
\mathbb P\Big(\sup_{x\in \mathcal S^N} |F(x)-\mathbb E[F(\mathbf n)]| \geq cN^{\frac 12 +\delta}\Big) & \leq \mathbb P\big((\Gamma^1)^c\big)+ K\mathbb P\big((\Gamma_1^2)^c\big)\\ 
& \lesssim \exp \big(-\tfrac 12 [c_1 \wedge c_2] N^{1+2\delta}\big)\,,
\end{align*}
yielding the first inequality. When condition 2 is replaced with condition 2', 
we let
\begin{align*}
\Gamma^1 = \Big\{ \sup_{x,y:R(x,y)\geq 1-\eta_N} |F^2(x)-F^2(y)| < \tfrac 12 c^2 N^{1+2\delta} \Big\}\,, \\
\Gamma_i^2 = \big\{ F(x_i) < \tfrac 1{\sqrt 2} c N^{\frac 12 +\delta}\big\} = \big\{F^2(x_i) <\tfrac 12 c^2 N^{1+2\delta} \big\}\,,
\end{align*}
and obtain
\begin{align*}
\mathbb P\Big(\sup_{x\in \mathcal S^N} F^2 (x) \geq c^2 N^{1+2\delta}\Big) \leq \mathbb P\big((\Gamma^1)^c\big) + K\mathbb P\big((\Gamma^2_1)^c \big).
\end{align*}
The first probability on the right-hand side is bounded by condition 2'. The second probability is bounded by condition 1 
combined with the bound on $\E[F(\mathbf n)]$. 
The conclusion then follows. 
\end{proof}

Recall the Euclidean Hessian $G$ given in~\eqref{eq:g-def}. 
We now wish to verify that condition (1) in Lemma~\ref{lem:sphere-union-bound} holds for $\tr G, \tr G^2$ and $\abs{\nabla \tr G}$.

\begin{lemma}\label{lem:GOE-at-a-point}
For $F$ given by each of $\tr G$, $\tr G^2$, and $\sqrt N \abs{\nabla \tr G}$, there exists $c_1(c)>0$ such that for every $c,\delta>0$, 
\begin{align*}
\mathbb P(|F(\mathbf n) - \mathbb E [F(\mathbf n)]| \geq cN^{\frac 12+\delta}) \lesssim \exp(-c_1 N^{1+2\delta})\,.
\end{align*}
\end{lemma}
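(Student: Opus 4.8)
The plan is to evaluate each of the three fields at the north pole $\mathbf n$, write it as an explicit Gaussian-polynomial functional of the disorder coefficients $J_{i_1,\dots,i_p}$, and then apply a concentration inequality appropriate to the degree of that polynomial. Recall that $G(\mathbf n) = \nabla^2 H(\mathbf n) + p\frac{H(\mathbf n)}{N}\mathrm{Id}$ is, up to the scalar shift by $p H(\mathbf n)/N$, equal to $C_p$ times a GOE matrix $M$ of size $(N-1)$ acting on $T_{\mathbf n}\cS^N$; here $C_p = \sqrt{p(p-1)}$. Concretely, the entries of $\nabla^2 H(\mathbf n)$ restricted to the tangent space are, up to normalization, the coefficients $J_{i,j,1,\dots,1}$ (symmetrized), which are jointly Gaussian with the GOE covariance structure. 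So each of $\tr G$, $\tr G^2$, $\nabla \tr G$ becomes a concrete low-degree polynomial in i.i.d.\ Gaussians, and the point is just to quantify its fluctuations around its mean.

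\textbf{The three cases.}
First, $\tr G(\mathbf n) = \tr \nabla^2 H(\mathbf n) + p\frac{(N-1)}{N} H(\mathbf n)$. The term $\tr \nabla^2 H(\mathbf n)$ is a linear functional of the Gaussian coefficients (a sum of diagonal Hessian entries), hence itself Gaussian with variance $O(N)$; likewise $H(\mathbf n)$ is Gaussian with variance $N$. Thus $\tr G(\mathbf n)$ is Gaussian with variance $O(N)$, and a standard Gaussian tail bound gives $\mathbb P(|\tr G(\mathbf n) - \mathbb E \tr G(\mathbf n)| \geq c N^{1/2+\delta}) \leq \exp(-c' N^{1+2\delta})$, which is condition (1) with the stated form (since $N^{1+2\delta}/N = N^{2\delta}$, and we actually get the stronger exponent $N^{2\delta}\cdot N = N^{1+2\delta}$ after dividing the squared deviation $N^{1+2\delta}$ by the variance $O(N)$). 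Second, $\tr G^2(\mathbf n)$ is a \emph{quadratic} form in the Gaussian coefficients, i.e.\ of the form $\langle \mathbf g, Q \mathbf g\rangle$ plus lower order, so one invokes the Hanson--Wright inequality (or the general concentration of Gaussian chaos of order $2$). Writing $\tr G^2 = \tr (\nabla^2 H)^2 + 2p\frac HN \tr \nabla^2 H + p^2 \frac{H^2}{N^2}(N-1)$, each piece is a polynomial of degree $\leq 2$ in Gaussians; Hanson--Wright gives a two-regime bound $\exp(-c\min(t^2/\|\cdot\|_F^2, t/\|\cdot\|_{op}))$, and with the relevant operator/Frobenius norms being $O(1)$ and $O(\sqrt N)$ respectively, taking $t = cN^{1/2+\delta}$ lands in the sub-Gaussian regime and yields $\exp(-c_1 N^{1+2\delta})$. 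Third, $\nabla \tr G(\mathbf n)$ is the tangential gradient (at $\mathbf n$) of the scalar field $x \mapsto \tr G(x)$; since $\tr G(x)$ is a linear functional of the Gaussian coefficients for each fixed $x$, its gradient at a point is again a Gaussian vector in $T_{\mathbf n}\cS^N$, and $\sqrt N |\nabla \tr G(\mathbf n)|$ concentrates by the Gaussian concentration of the norm of a Gaussian vector (Lipschitz function of Gaussians): its expectation is $O(\sqrt N)$ and fluctuations around it are controlled by the operator norm of the covariance, again $O(\sqrt N)$ after the $\sqrt N$ scaling, giving the claimed bound. In each case the cleanest route is to express everything through the identification with $C_p M + (\text{scalar})\mathrm{Id}$ and use known GOE trace concentration (e.g.\ $\tr M$ Gaussian, $\tr M^2$ a quadratic chaos), together with the independence of $G$ from $H$ which makes the cross terms easy to handle.

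\textbf{Main obstacle.} The conceptual content is light; the real work is bookkeeping the normalizations so that the variances come out as $O(N)$ for $\tr G$, $O(N)$ for $\tr G^2$ (which is what makes $t = cN^{1/2+\delta}$ deep in the tail), and $O(\sqrt N)$ for $\sqrt N|\nabla \tr G|$, and then matching these to the precise exponent $\exp(-c_1 N^{1+2\delta})$ demanded in the statement. The one genuinely non-trivial inequality is the Hanson--Wright step for $\tr G^2$: one must check that, at deviation scale $N^{1/2+\delta}$, one is in the sub-Gaussian (Frobenius-norm) regime rather than the sub-exponential (operator-norm) regime, i.e.\ that $t^2/\|Q\|_F^2 \lesssim t/\|Q\|_{op}$ fails in the right direction — which it does precisely because $t = cN^{1/2+\delta} \gg \|Q\|_F^2/\|Q\|_{op} = O(\sqrt N)/O(1) \cdot$(constants). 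I would also remark that, for $\tr G^2$, centering at $\mathbb E[\tr G^2(\mathbf n)] = p(p-1)N$ as noted after the theorem statement is what we want, but the fluctuation analysis only needs the variance, not the exact mean. Everything else — the gradient field being Gaussian, the identification with GOE — is standard and can be cited to the references already in the paper (\cite{ABC13} for the GOE identification, standard texts for Hanson--Wright and Gaussian concentration).
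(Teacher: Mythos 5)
Your overall strategy---evaluate each field at $\mathbf n$, identify its Gaussian or Gaussian-chaos structure in the disorder coefficients, and apply pointwise concentration---is the same as the paper's, but the quantitative bookkeeping, which you yourself identify as the real content of the lemma, is wrong in all three cases, and with the numbers you state the exponent $N^{1+2\delta}$ is not attained. The crux you miss is that all three quantities fluctuate on scale $O(1)$, not $O(\sqrt N)$ or $O(N)$. For $\tr G(\mathbf n)$: the paper's identification is that $G$ itself (the Euclidean Hessian restricted to $T_{\mathbf n}\cS^N$) is $C_p$ times a GOE matrix, \emph{not} the covariant Hessian $\nabla^2 H = G - p\tfrac HN Id$ as you assert (and $C_p=\sqrt{2p(p-1)}$, a minor point). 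Consequently $\tr G(\mathbf n)=C_p\tr M$ is Gaussian with variance $O(1)$; equivalently, in your decomposition $\tr G=\Delta H + p\tfrac{N-1}{N}H$ the two order-$\sqrt N$ Gaussian pieces cancel, since $\Delta H = -p(1-\tfrac 1N)H+\tr G$. Your argument instead adds the two $O(N)$ variances to conclude ``variance $O(N)$,'' which yields only $\exp(-cN^{2\delta})$, and your parenthetical patch (``dividing the squared deviation $N^{1+2\delta}$ by the variance $O(N)$'' gives ``$N^{2\delta}\cdot N = N^{1+2\delta}$'') is incorrect arithmetic: that quotient is $N^{2\delta}$. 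Note this is not cosmetic: if $\tr G(\mathbf n)$ really carried a Gaussian component of variance of order $N$ (as it would under your identification, via the $p\tfrac{N-1}{N}H(\mathbf n)$ term), the stated tail bound would be false, so the exact cancellation / correct GOE identification is the whole point.

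The same issue recurs in the other two cases. For $\tr G^2(\mathbf n)$, written as a quadratic form $\langle J,QJ\rangle$ in the i.i.d.\ disorder (or $C_p^2\sum_{i,j}M_{ij}^2$), the correct norms are $\|Q\|_F=O(1)$ and $\|Q\|_{op}=O(1/N)$ --- a sum of order $N^2$ independent terms each of size $1/N$ --- so Hanson--Wright gives $\exp\big(-c\min(N^{1+2\delta},N^{3/2+\delta})\big)$, which is the claimed bound (for $\delta\le \tfrac12$; the paper instead cites spectral-statistics concentration). With your stated norms ($\|Q\|_{op}=O(1)$, $\|Q\|_F=O(\sqrt N)$) you would get only $\exp\big(-c\min(N^{2\delta},N^{1/2+\delta})\big)$, and your regime check is also inverted: the sub-Gaussian regime requires $t\lesssim \|Q\|_F^2/\|Q\|_{op}$, whereas you argue $t\gg \|Q\|_F^2/\|Q\|_{op}$. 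For $F=\sqrt N\abs{\nabla\tr G(\mathbf n)}$, the needed fact (which the paper's covariance computation supplies) is that the directional derivatives $E_i\tr G(\mathbf n)$ have variance $O(1/N)$, so that $\sqrt N\,\nabla\tr G(\mathbf n)$ has covariance of operator norm $O(1)$; then $F$ is an $O(1)$-Lipschitz function of a standard Gaussian vector with $\E[F]=O(\sqrt N)$, and Gaussian concentration gives $\exp(-cN^{1+2\delta})$. Your claim that the fluctuations are controlled by a quantity of size $O(\sqrt N)$ would again produce only $\exp(-cN^{2\delta})$-type decay. So as written the proposal does not establish the stated tails; it becomes correct once the variances/norms are recomputed with the $G\sim C_p\cdot\mathrm{GOE}$ identification (i.e.\ with the $O(1)$, $O(1/N)$ scales above), at which point it is essentially the paper's argument with Hanson--Wright substituted for the cited trace-concentration results.
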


\begin{proof}
As $G(\mathbf n)$ is a GOE, up to a constant factor,  $G^2(\mathbf n)$ is a Wishart matrix. 
The results for $\tr G(\mathbf n)$ and $\tr G^2(\mathbf n)$, then follow by standard
concentration estimates for spectral statistics \cite[Corr. 1.6, Corr 1.8]{GuiZeit00} or sums of i.i.d.\ random variables  \cite{LedouxTalagrand}.

Now consider $F(\mathbf n)= \sqrt N \abs{\nabla \tr G(\mathbf n)}$.
Since  $G$ is the restriction of the Euclidean Hessian of $H$ to $T\cS^N$,
 an explicit calculation yields 
 \[
\E \left[E_i \tr G(\mathbf n)E_j \tr G(\mathbf n) \right] = c_p \delta_{ij}\,,
\]
for some $c_p>0$ independent of $N$, where $\{E_i\}$ are an orthonormal frame for $T_\bn\cS^N$. (See, e.g.,~\cite{SubGibbs16} for similar
calculations.) In particular, $\nabla \tr G(\bn)$ is
 a standard Gaussian vector with independent entries, up to a constant factor depending at most on $p$. The result then follows
by concentration of norms of Gaussian vectors \cite{LedouxTalagrand}.
\end{proof}

We now prove the uniform continuity required to obtain conditions 2--2' in Lemma~\ref{lem:sphere-union-bound}.
\begin{lemma}\label{lem:GOE-mod-of-continuity}
Let $F$ be given by $\tr G, \tr G^2$ and $N \abs{\nabla \tr G}^2$. There exists $\alpha>0$ such that for $\eta_N \asymp N^{-\alpha}$,  there exists $c_2(c)>0$ such that for every $c,\delta>0$, we have 
\begin{align*}
\mathbb P\Big( \sup_{x,y: R(x,y)\geq 1-\eta_N} |F(x)-F(y)| \geq cN^{\frac 12+\delta}\Big) \lesssim \exp(-c_2 N^{1+2\delta})\,.
\end{align*}
\end{lemma}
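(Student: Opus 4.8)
The goal is to establish a uniform modulus of continuity for the trace statistics of the Euclidean Hessian $G$ restricted to the tangent bundle, at the scale of overlaps $R(x,y) \geq 1 - \eta_N$ with $\eta_N \asymp N^{-\alpha}$ for a suitably small $\alpha > 0$. The plan is to reduce each of $\tr G$, $\tr G^2$, and $N|\nabla \tr G|^2$ to a polynomial functional of a finite collection of Gaussian fields (the entries of the Euclidean Hessian $\hat\nabla^2 H$ and lower derivatives of $H$, expressed in a fixed frame via Lemma~\ref{lem:diff-geo-fact}), and then control the fluctuations of these underlying fields uniformly over nearby points using the regularity estimates already in hand, namely \eqref{eq:euclidean-deriv-bound} and the $\cG^k$-bounds of Theorem~\ref{thm:reg}.

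First I would set up the comparison at two nearby points $x,y$ with $R(x,y) \geq 1-\eta_N$. Using \eqref{eq:g-def}, $G(x) = \hat\nabla^2 H(x) + p\frac{H(x)}{N}\,Id$ as a bilinear form on $T_x\cS^N$; to compare $G(x)$ and $G(y)$ one must transport $T_y\cS^N$ to $T_x\cS^N$ (say by the orthogonal projection, or parallel transport, which is $O(\sqrt{\eta_N})$-close to the identity on the relevant scale). Then I would write
\[
\tr G(x) - \tr G(y) = \big(\tr_{T_x} \hat\nabla^2 H(x) - \tr_{T_y}\hat\nabla^2 H(y)\big) + p\,\frac{H(x)-H(y)}{N}\cdot(N-1)\,,
\]
and bound each piece. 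The second term is controlled by the $\dot W^{1,\infty}$ bound on $H$ from Theorem~\ref{thm:reg}: $|H(x)-H(y)| \lesssim \|H\|_{\dot W^{1,\infty}}\, d(x,y) \lesssim \sqrt{N}\cdot\sqrt{N\eta_N}$, which is $o(N^{1/2+\delta})$ once $\eta_N = N^{-\alpha}$ with $\alpha$ large enough relative to nothing — actually we need $\alpha > 2\delta$ roughly, but since this must hold for all $\delta>0$ we instead argue at fixed $\delta$ and choose $\alpha = \alpha(\delta)$, or better, absorb this into a high-probability event on $\|H\|_{\cG^k}$ and make the deterministic modulus beat $cN^{1/2+\delta}$ for all large $N$. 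For the first term, the difference of traces of the Euclidean Hessian over the two tangent spaces splits into (i) the difference $\hat\nabla^2 H(x) - \hat\nabla^2 H(y)$ evaluated on a fixed frame, controlled by the third Euclidean derivative bound $\sup\|\hat\nabla^3 H\|_{op} \lesssim N^{-1/2}$ from \eqref{eq:euclidean-deriv-bound} (so each of the $N$ diagonal terms moves by $\lesssim N^{-1/2}\cdot d(x,y) \lesssim N^{-1/2}\sqrt{N\eta_N}$, totalling $\lesssim N\cdot N^{-1/2}\sqrt{N\eta_N} = N\sqrt{\eta_N}$), and (ii) the error from rotating the tangent space, which is $\lesssim \sqrt{\eta_N}\cdot\|\hat\nabla^2 H\|_{op}\cdot N \lesssim \sqrt{\eta_N}\cdot 1 \cdot N$. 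Both contributions are $\lesssim N^{1-\alpha/2}$, which is $\leq cN^{1/2+\delta}$ for all $N$ large once $\alpha > 1 - 2\delta$; since $\delta>0$ is fixed in the statement, taking $\alpha$ close to $1$ suffices and we may take $\alpha = 1/2$ say (then $N^{1-\alpha/2} = N^{3/4} \leq cN^{1/2+\delta}$ fails for small $\delta$ — so more care: we actually want $\alpha$ chosen per the smallest relevant scale; I would take $\alpha$ to be, say, $3$, forcing every error to be $o(\sqrt N)$, and absorb the event that the $\cG^3$-norm of $H$ exceeds $K_1 N$ into the $\exp(-cN)$ probability allowance, which dominates $\exp(-c_2 N^{1+2\delta})$). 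The same scheme handles $\tr G^2$: expand $\tr G^2(x) - \tr G^2(y)$ as a telescoping sum $\tr(G(x)(G(x)-G(y))) + \tr((G(x)-G(y))G(y))$, use $\|G\|_{op}\lesssim 1$ (on the high-probability event from Theorem~\ref{thm:reg}, or from Theorem~\ref{thm:GOE-reg}\eqref{eq:tr-goe} combined with the crude operator bound), and bound $\|G(x)-G(y)\|$ in Frobenius norm via $\hat\nabla^3 H$ and the tangent-space rotation as above, paying an extra factor of $\sqrt N$ for the Frobenius-over-operator conversion — this is exactly why condition (2') allows the weaker scale $N^{1+2\delta}$ rather than $N^{1/2+\delta}$. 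Finally $N|\nabla\tr G|^2$: since $\nabla \tr G$ involves one more derivative, I would bound $|\nabla\tr G(x) - \nabla\tr G(y)|$ using \eqref{eq:euclidean-deriv-bound} with $k=4$ (the fourth Euclidean derivative, scaling like $N^{-1}$) plus curvature corrections from Lemma~\ref{lem:diff-geo-fact}, giving a deterministic Lipschitz-type bound on the high-probability event, and then the square and the factor of $N$ land it at the scale $N^{1+2\delta}$ demanded.

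The main obstacle — and the only genuinely delicate point — is the bookkeeping of the tangent-space mismatch: $G(x)$ and $G(y)$ live on different fibers, and one must argue that the natural identification (projection $\Pi: T_y\cS^N \to T_x\cS^N$, or parallel transport along the minimizing geodesic) distorts the bilinear form by an operator-norm amount $\lesssim d(x,y)/\sqrt N \lesssim \sqrt{\eta_N}$, because the second fundamental form \eqref{eq:second-fund-form} has size $1/\sqrt N$. Combined with $\|\hat\nabla^2 H\|_{op} = O(1)$ this gives an $O(N\sqrt{\eta_N})$ change in the trace purely from reframing. One must be careful that this error, and not just the ``genuine'' derivative error, is what dictates the admissible range of $\alpha$; but since any fixed $\alpha>2$ makes all errors $o(N^{1/2})$, and the probability cost of the supporting good event (the $\cG^4$-bound from Theorem~\ref{thm:reg}) is $\exp(-cN) = O(\exp(-c_2 N^{1+2\delta}))$ for $\delta>0$ small — wait, that inequality goes the wrong way; rather $\exp(-cN)$ is not smaller than $\exp(-c_2 N^{1+2\delta})$. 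So instead I would keep $\alpha$ genuinely close to (or equal to) $1$, say $\alpha \in (1/2, 1)$ chosen so that $N^{1-\alpha/2} \ll N^{1/2+\delta}$ for the relevant fixed $\delta$, and note that the statement quantifies ``for every $c,\delta>0$'' which permits $\alpha = \alpha(\delta)$; alternatively one uses the sharper deterministic modulus coming directly from the operator-norm control $\|G(x)-G(y)\|_{op}\lesssim \sqrt{\eta_N}$ rather than $\sqrt N$ times it, which does give scale $N\sqrt{\eta_N}$ and hence $\alpha$ any value exceeding $1 - 2\delta$ works uniformly over the stated $\delta$. Either way, the proof is a deterministic perturbation estimate on a high-probability event, with the probabilistic input already supplied by Theorem~\ref{thm:reg} and Lemma~\ref{lem:GOE-at-a-point}; once Lemma~\ref{lem:GOE-mod-of-continuity} is in place, plugging it together with Lemma~\ref{lem:GOE-at-a-point} into Lemma~\ref{lem:sphere-union-bound} yields Theorem~\ref{thm:GOE-reg}.
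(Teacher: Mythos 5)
Your deterministic perturbation scheme is workable in outline, but the proposal as written has a genuine gap: it cannot deliver the stated probability scale $\exp(-c_2 N^{1+2\delta})$. Every estimate you invoke — the Euclidean derivative bounds \eqref{eq:euclidean-deriv-bound} and the $\cG^k$-bounds of Theorem~\ref{thm:reg} at their typical thresholds $K N^{1-k/2}$ — holds only on an event of probability $1-e^{-cN}$, and $e^{-cN}$ is \emph{larger} than $e^{-c_2 N^{1+2\delta}}$. You notice this yourself mid-argument ("that inequality goes the wrong way") but then try to repair it by changing $\alpha$; the choice of $\alpha$ only tunes the size of the deterministic modulus and has no effect whatsoever on the probability of the supporting event, so the gap remains. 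The fix, which is what the paper actually does, is to run the derivative bounds at thresholds elevated by a factor $N^{\delta}$: by Borell's inequality (Gaussian concentration around the $\cG$-norm/expectation bound), the events $\{\|\,|\nabla^k \tr G|\,\|_{L^\infty} \geq N^{(2-k)/2+\delta}\}$ for $k=1,2$ have probability $e^{-cN^{1+2\delta}}$, and the resulting slightly worse Lipschitz constant is absorbed by taking $\eta_N$ smaller (the paper takes $\alpha = 1+\delta$). Without that step your argument proves only the weaker statement with $e^{-cN}$ on the right-hand side, which would also degrade the conclusion of Theorem~\ref{thm:GOE-reg}.

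On the deterministic side, your route also differs from the paper's in a way worth noting. You compare $G(x)$ and $G(y)$ extrinsically, via $\hat\nabla^3 H$ plus a tangent-space identification, and spend most of your effort on the transport/rotation error; this can be made to work, but the paper sidesteps it entirely by observing that $\tr G$, $\tr G^2$ and $|\nabla \tr G|^2$ are scalar fields on $\cS^N$, and that since contractions commute with covariant derivatives, $|\nabla^k \tr G| \leq N|\nabla^{k+2} H| + p|\nabla^k H|$; the modulus of continuity is then just a gradient bound for the scalar field itself (e.g.\ $|\nabla|\nabla\tr G|^2| \leq 2|\nabla^2 \tr G|\,|\nabla \tr G|$), with no frame bookkeeping. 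Two smaller points: the quantifier structure you agonize over ($\alpha$ uniform in $\delta$ versus $\alpha=\alpha(\delta)$) is not where the difficulty lies, and your telescoping bound for $\tr G^2$ with the Frobenius conversion already lands at scale $N\sqrt{\eta_N}\leq N^{1/2}$ for $\alpha\geq 1$, so no retreat to the weaker scale of condition (2') is needed there — in the paper (2') is reserved for $\sqrt N|\nabla \tr G|$.
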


\begin{proof}
We prove the desired for 
 $\sqrt{N} \abs{\nabla \tr G}$. The cases $\tr G$ and $\tr G^2$ follow by the same argument  (and are in fact implicitly proved in what follows). 
For every $x\in \cS^N$, $X\in T_x\cS^N$ with $\abs{X}=1$,
\begin{align*}
\nabla_X\abs{\nabla \tr G}^2 =2\nabla^2(\tr G)(\nabla\tr G,X)\leq 2\abs{\nabla^2\tr G} \abs{\nabla\tr G}.
\end{align*}
Since contractions commute with covariant derivatives
it follows, by \eqref{eq:g-def}, that
\[
\big|{\nabla^k \tr G}\big| = \big|{\nabla^k \tr \nabla^2 H+p(1-\frac{1}{N})\nabla^{k} H}\big|  \leq N\big|{\nabla^{k+2} H}\big| + p\big|{\nabla^k H}\big|\,.
\]
Combining this for $k=1,2$ with the $\cG$-norm estimate \prettyref{eq:g-norm-bound}
and Borell's inequality, we see that
\[
\frac{1}{N^{1+2\delta}}\prob\left(\big\|{\,|{\nabla^k \tr G}|\,}\big\|_{L^\infty}\geq N^{\frac{2-k}2+\delta},\, k=1,2\right) <-c\,,
\]
for some $c>0$ for all $N$ sufficiently large. This implies that 
\[
\frac{1}{N^{1+2\delta}}\log\prob\bigg( \big\|{\,\big|{\nabla\abs{\nabla \tr G}^2}|\,}\big\|_{L^\infty}\geq N^{\frac 12+2\delta}\bigg)< -c\,,
\] 
holds for all $N$ sufficiently large. Choosing $\alpha=1+\delta$ yields the result. %
\end{proof}

\begin{proof}[\textbf{\emph{Proof of Theorem~\ref{thm:GOE-reg}}}]
By Lemma~\ref{lem:sphere-union-bound}, it first suffices to prove that conditions 1--2 hold for $\tr G$ and $\tr G^2$ as these are both isotropic, almost surely continuous fields on $\mathcal S^N$. For these two fields, condition 1 follows from Lemma~\ref{lem:GOE-at-a-point} and condition 2 follows from Lemma~\ref{lem:GOE-mod-of-continuity}. For $\sqrt N |\nabla \tr G(x)|$, it is again an isotropic, almost surely continuous field on $\mathcal S^N$ and we observed 
in the proof of Lemma~\ref{lem:GOE-at-a-point} earlier that $\mathbb E[\sqrt N |\nabla \tr G(\mathbf n)|] = O(\sqrt N)$. Condition 1 for this field follows from Lemma~\ref{lem:GOE-at-a-point}, and condition~2' follows from Lemma~\ref{lem:GOE-mod-of-continuity}.
\end{proof}

\section{Tightness and differentiability of observables}\label{sec:diff-ineq}

We begin our analysis of dynamics for $p$-spin models in this section. Here $X_{t}$ will be Langevin dynamics with Hamiltonian $H=H_{N,p}$. Our main
goal will be to conclude the tightness and differentiability of weak
limit points for the family of observables
\begin{align*}
u_{N}(t) & =-\frac{H(X_t)}{N}\,,
&&v_{N}(t)  =\frac{\abs{\nabla H(X_t)}^{2}}{N}\,,
\end{align*}
and the auxiliary  observable
\[
w_{N}(t)=\frac{1}{N}G(\nabla H,\nabla H)(X_{t})\,.
\]
 Let $\sL_{x_{N},N}^{T}$, a probability measure on $C([0,T])^{3}$,
be the law of the augmented family $(u_{N},v_{N},w_{N})$ started from $X_0 =x_N$, and let
\begin{align}
\cF_1(x,y) & =-px+\beta y\,, \label{eq:udot}\\
\cF_2(x,y,z) & =2p(p-1)+2p^{2}x^{2}-2(p-1)y-2p\beta xy -2\beta z\,.
\end{align}
We then have the following theorem. 
\begin{theorem}\label{thm:p-spin-tightness}
$\prob$-almost surely, the following holds. For every sequence $x_{N}\in\cS^{N}$, the family $(u_{N},v_{N},w_{N})$
is tight. That is $\sL_{x_{N},N}^{T}$ is precompact in the narrow
topology. Furthermore, for any limit point $\sL$, the family $(u,v,w)\sim \sL$ satisfies the integral equations \
\begin{align}
u(t) &=u(0) +\int_0^t \cF_1(u(s),v(s))ds\,, \label{eq:u-integral-equation}\\
v(t) &=v(0)+ \int_0^t\cF_2(u(s),v(s),w(s))ds\,.\label{eq:v-integral-equation}
\end{align}
and the family $(u(s),v(s))_s$ are continuously differentiable.
\end{theorem}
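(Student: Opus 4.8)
The plan is to apply \prettyref{thm:differentiability-general-lemma} with the augmented family $(F_N^1,F_N^2,F_N^3)=(-H/N,\,\abs{\nabla H}^2/N,\,G(\nabla H,\nabla H)/N)$, for which $\ell=2$ and $F_N^3$ is the auxiliary observable. There are then two things to verify: first, that $(V_N)=(H_{N,p})$ is $C$-regular and the augmented family is mild (so that the general tightness machinery applies), and second, that the family is $H$-quasi-autonomous with dynamical functions $\cF_1,\cF_2$ given by \eqref{eq:udot}, i.e.\ that the splitting \eqref{eq:splitting} holds with remainders $g_N^k$ satisfying \eqref{eq:g-decay}. The $C$-regularity and mildness will follow immediately from \prettyref{thm:reg}: on the $\prob$-almost sure event that $H$ is in $\cG^k$ uniformly in $N$ (with $k$ large enough, here $k\le 4$ suffices since $v_N$ and $w_N$ involve up to second derivatives of $H$, whose covariant derivatives of order up to $2$ are controlled by $\cG^4$-type bounds via \prettyref{lem:diff-geo-fact}), one reads off $\norm{|\nabla H|}_{L^\infty}\lesssim\sqrt N$ and the corresponding $L^\infty$ and gradient bounds on $H/N$, $\abs{\nabla H}^2/N$, $G(\nabla H,\nabla H)/N$. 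I would also invoke \prettyref{thm:GOE-reg} to control the $\tr G$, $\tr G^2$ terms that appear in the generator computations.

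The heart of the argument is the generator computation establishing \eqref{eq:splitting}. For $u_N = -H/N$: since $H$ is a degree-$p$ homogeneous polynomial, it is an approximate spherical harmonic, $\Delta H = -pH + \tr G - p(1-\tfrac1N)H$ — more precisely, by \eqref{eq:g-def} the covariant Laplacian relates to the Euclidean one with correction $p\tfrac{H}{N}\cdot(N-1)$ plus $\tr G$ — so that $L(-H/N) = -\tfrac1N\Delta H + \tfrac{\beta}{N}\abs{\nabla H}^2 = -p\tfrac{-H}{N} + \beta\tfrac{\abs{\nabla H}^2}{N} + g_N^1$, where $g_N^1 = \tfrac1N\tr G + O(1/N)$; by \eqref{eq:tr-goe} of \prettyref{thm:GOE-reg}, $\sup_x \abs{\tr G}/N \to 0$ uniformly, giving \eqref{eq:g-decay}. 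For $v_N = \abs{\nabla H}^2/N$, apply Bochner's formula \eqref{eq:bochner}: $\tfrac12\Delta\abs{\nabla H}^2 = \abs{\nabla^2 H}_F^2 + \g{\nabla\Delta H,\nabla H} + (1-\tfrac1N)\abs{\nabla H}^2$. Using $\Delta H \approx -pH$ gives $\g{\nabla\Delta H,\nabla H}\approx -p\abs{\nabla H}^2$; using \eqref{eq:g-def}, $\abs{\nabla^2 H}_F^2 = \abs{G - p\tfrac HN Id}_F^2 = \tr G^2 - 2p\tfrac HN \tr G + p^2\tfrac{H^2}{N^2}(N-1)$, and \prettyref{thm:GOE-reg} replaces $\tr G^2$ by $p(p-1)N$ up to $o(N)$ and kills the $\tr G$ term. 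This produces the $2p(p-1)N + 2p^2 H^2/N - 2(p-1)\abs{\nabla H}^2$ piece. The drift term $-2\beta\nabla^2 H(\nabla H,\nabla H)$ is handled via $\nabla^2 H(\nabla H,\nabla H) = G(\nabla H,\nabla H) - p\tfrac HN\abs{\nabla H}^2$ (again \eqref{eq:g-def}), which is exactly where $w_N = G(\nabla H,\nabla H)/N$ enters as the irreducible auxiliary observable — it cannot be expressed in terms of $u_N,v_N$ alone. Dividing by $N$ and collecting terms yields $Lv_N = \cF_2(u_N,v_N,w_N) + g_N^2$ with $g_N^2$ built from $\tr G/N$, $(\tr G^2 - p(p-1)N)/N$ and lower-order terms, all $\to 0$ uniformly by \prettyref{thm:GOE-reg}.

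Once \eqref{eq:splitting}–\eqref{eq:g-decay} are in place, \prettyref{thm:differentiability-general-lemma} gives, on the same $\prob$-a.s.\ event, tightness of $(u_N,v_N,w_N)$ in $C([0,T])^3$ and the integral equations \eqref{eq:u-integral-equation}–\eqref{eq:v-integral-equation} for every weak limit point, together with continuous differentiability of $u$ and $v$ (the $\ell=2$ observables); $w$ itself need not be differentiable. The martingale terms are $O(\sqrt{t/N})$ by Burkholder–Davis–Gundy and the mildness bound $\abs{\nabla F_N^k}\lesssim\sqrt N$, as in the proof of \prettyref{thm:differentiability-general-lemma}. I expect the main obstacle to be bookkeeping in the second-derivative computation for $v_N$: carefully separating the genuinely divergent-looking terms ($\tr G^2$, $p^2 H^2/N^2 \cdot(N-1)$, the Bochner curvature term) into the autonomous part $\cF_2(u_N,v_N,w_N)$ versus the negligible remainder $g_N^2$, and verifying that every leftover is covered by the concentration estimates of \prettyref{thm:GOE-reg} (in particular the $N^{1/2+\delta}$-scale fluctuations of $\tr G^2$ are $o(N)$) and the $\cG^k$-regularity of \prettyref{thm:reg}. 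The geometric identities — relating covariant to Euclidean derivatives via \eqref{eq:g-def}, \eqref{eq:second-fund-form} and \prettyref{lem:diff-geo-fact}, and Bochner \eqref{eq:bochner} with the sphere's Ricci curvature \eqref{eq:ricci} — must be applied with care to get the constants in $\cF_1,\cF_2$ exactly right, but these are deterministic and routine.
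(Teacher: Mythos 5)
Your proposal is correct and follows essentially the same route as the paper: verify $C$-regularity and mildness of the augmented family $(-H/N,\,|\nabla H|^2/N,\,G(\nabla H,\nabla H)/N)$ via Theorem~\ref{thm:reg}, establish the splitting~\eqref{eq:splitting} with remainders controlled by Theorem~\ref{thm:GOE-reg} (the paper packages this as Lemma~\ref{lem:Lgrad-squared-estimate}), and conclude via Theorem~\ref{thm:differentiability-general-lemma}. The only blemishes are cosmetic — a sign slip in $g_N^1$ (it is $-\tr G/N + O(1/N)$) and a garbled intermediate form of $\Delta H = -p(1-\tfrac1N)H + \tr G$ — neither of which affects the argument, since only $\|g_N^k\|_\infty \to 0$ matters and your subsequent computations use the correct identity.
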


We first need the following estimates on $\Delta H_N$ and $\Delta |\nabla H_N|^2$ to estimate $Lu_N$ and $Lv_N$.

\begin{lemma}\label{lem:Lgrad-squared-estimate}
For every $p>2$, there exists $f(\cdot)>0$ such that  for every $c,\delta>0$, 
\begin{align}
\prob\big(\norm{\Delta H + pH}_{L^\infty(\mathcal S^N)} \geq c N^{1/2+\delta}\big)& \lesssim \exp(-f(c)N^{1+2\delta})\,,\label{eq:laplacian-estimate}\\
\prob\left(\norm{\Delta\abs{\nabla H}^2-2A}_{L^\infty(\mathcal S^N)} \geq c N^{1/2+\delta}\right)& \lesssim \exp(-f(c)N^{1+2\delta})\,,\label{eq:laplacian-grad-estimate}
\end{align}
where
\begin{equation}\label{eq:A-def}
A=Np(p-1)+p^2\frac{H^2}{N}-(p-1)\abs{\nabla H}^2\,.
\end{equation} 
\end{lemma}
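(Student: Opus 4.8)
The plan is to deduce both estimates from the differential-geometric identities of Section~\ref{sec:regularity} relating the spherical Laplacian and Hessian of $H$ to the restricted Euclidean Hessian $G$, together with the uniform bounds of Theorems~\ref{thm:reg} and~\ref{thm:GOE-reg}. All the probabilistic content is outsourced to those two theorems; what remains is to keep track of which terms are of main order ($\sim N$, built from $H^2/N$ and $\abs{\nabla H}^2$, which appear in $A$) and which are errors of order $N^{1/2+\delta}$, and then to assemble a union bound.

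For \eqref{eq:laplacian-estimate} I would start from \eqref{eq:g-def}, i.e.\ $G=\nabla^2 H+\tfrac{pH}{N}\mathrm{Id}$ (a consequence of the homogeneity of $H$ and the form \eqref{eq:second-fund-form} of the second fundamental form), and take traces over the $(N-1)$-dimensional tangent space to get
\[
\Delta H+pH=\tr G+\tfrac{pH}{N}\,.
\]
Then \eqref{eq:tr-goe} controls $\sup_x\abs{\tr G}\lesssim N^{1/2+\delta}$, while the $L^\infty$ bound on $H/N$ from Theorem~\ref{thm:reg} gives $\sup_x\abs{H}\lesssim N$, so $\tfrac{pH}{N}=O(1)$; a union bound over these two events yields \eqref{eq:laplacian-estimate}.

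For \eqref{eq:laplacian-grad-estimate} I would apply Bochner's formula \eqref{eq:bochner}, which on $\cS^N$ reads
\[
\Delta\abs{\nabla H}^2=2\abs{\nabla^2 H}_F^2+2\g{\nabla\Delta H,\nabla H}+2\bigl(1-\tfrac1N\bigr)\abs{\nabla H}^2\,,
\]
and estimate the three terms in turn. For the first, expanding $\nabla^2 H=G-\tfrac{pH}{N}\mathrm{Id}$ gives $\abs{\nabla^2 H}_F^2=\tr G^2-\tfrac{2pH}{N}\tr G+\tfrac{p^2H^2(N-1)}{N^2}$; by \eqref{eq:tr-goe-squared} the first summand is $p(p-1)N+O(N^{1/2+\delta})$, by \eqref{eq:tr-goe} together with $\abs{H}\lesssim N$ the cross term is $O(N^{1/2+\delta})$, and the last summand is $\tfrac{p^2H^2}{N}+O(1)$. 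For the second term, since $\Delta H=\tr G-\tfrac{p(N-1)}{N}H$ from the previous identity, one has $\nabla\Delta H=\nabla\tr G-\tfrac{p(N-1)}{N}\nabla H$, so $\g{\nabla\Delta H,\nabla H}=\g{\nabla\tr G,\nabla H}-\tfrac{p(N-1)}{N}\abs{\nabla H}^2$; by Cauchy--Schwarz, the bound \eqref{eq:tr-goe-lip}, and the $\dot W^{1,\infty}$ bound $\abs{\nabla H}\lesssim\sqrt N$ from Theorem~\ref{thm:reg}, the first piece is $O(N^{1/2+\delta})$ while the second is $p\abs{\nabla H}^2+O(1)$. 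The third term is manifestly $2\abs{\nabla H}^2+O(1)$. Summing, the two $\tfrac{p^2H^2}{N}$ contributions add and the $\abs{\nabla H}^2$ contributions combine to $-2(p-1)\abs{\nabla H}^2$, giving $\Delta\abs{\nabla H}^2=2Np(p-1)+\tfrac{2p^2H^2}{N}-2(p-1)\abs{\nabla H}^2+O(N^{1/2+\delta})=2A+O(N^{1/2+\delta})$ with $A$ as in \eqref{eq:A-def}; a union bound over the relevant events from Theorems~\ref{thm:reg} and~\ref{thm:GOE-reg} completes the proof.

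The only genuine subtlety is the cross term $\tfrac{H}{N}\tr G$ appearing in $\abs{\nabla^2 H}_F^2$ (and implicitly in $\Delta H$): $H$ is of order $N$ while $\tr G$ is a priori only $o(N)$, so one must feed in the sharp uniform estimate $\sup_x\abs{\tr G}\lesssim N^{1/2+\delta}$ of \eqref{eq:tr-goe} against the crude $\abs{H}\lesssim N$ to see that the product lies at the error scale $N^{1/2+\delta}$. Everything else is routine expansion of the identities above, with no further probabilistic input beyond Theorems~\ref{thm:reg} and~\ref{thm:GOE-reg}.
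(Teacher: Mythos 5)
Your proposal is correct and follows the same route as the paper: for \eqref{eq:laplacian-estimate} it traces the identity $G=\nabla^2H+\tfrac{pH}{N}\mathrm{Id}$ and invokes \eqref{eq:tr-goe} together with the $L^\infty$ bound on $H$ from Theorem~\ref{thm:reg}, and for \eqref{eq:laplacian-grad-estimate} it expands Bochner's formula \eqref{eq:bochner} using $\nabla^2H=G-\tfrac{pH}{N}\mathrm{Id}$ and $\Delta H=\tr G-p(1-\tfrac1N)H$, estimating $\tr G^2$ by \eqref{eq:tr-goe-squared}, the cross term $\tfrac{H}{N}\tr G$ by \eqref{eq:tr-goe} with $\abs H\lesssim N$, and $\g{\nabla\tr G,\nabla H}$ by Cauchy--Schwarz with \eqref{eq:tr-goe-lip} and $\abs{\nabla H}\lesssim\sqrt N$. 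The paper's proof is exactly this computation, stated more tersely.
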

\begin{proof}
We begin with the first estimate. 
Taking the trace of \eqref{eq:g-def}, we see that
\[
\Delta H ={-p(1-\frac{1}{N}) H} + \tr G(x)\,.
\]
The result then follows by \eqref{eq:tr-goe} and~\eqref{eq:g-norm-bound}. 

Now for the second estimate, by the Ricci bound \eqref{eq:ricci} and Bochner's formula \eqref{eq:bochner},
\begin{align*}
\frac{1}{2}\Delta \abs{\nabla H}^2 &=\tr(G^2)+(1-\frac{1}{N})p^2\frac{H^2}{N}-2p \tr(G)\frac{H}{N}-(p-1)(1-\frac{1}{N})\abs{\nabla H}^2 
+\g{\nabla \tr G,\nabla H}\,.
\end{align*}
The result then follows upon applying \eqref{eq:tr-goe-squared} to the first term,~\eqref{eq:tr-goe} to the third term, and Cauchy--Schwarz along with~\eqref{eq:tr-goe-lip} to the fifth term.
\end{proof}

\begin{proof}[\textbf{\emph{Proof of \prettyref{thm:p-spin-tightness}}}]
The result will follow by an application of \prettyref{thm:differentiability-general-lemma}.
We begin by observing that $H$ satisfies the gradient estimate  \prettyref{eq:Grad-est-U} by Theorem~\ref{thm:reg}.
Thus $H$ is $\mathbb P$--eventually almost surely $K_1(p,k)$-regular for any fixed $k$.

We now check that these observables are mild. Let 
\[
F_N^{1}=-\frac{H}{N}\,,\qquad F_N^{2}=\frac{\abs{\nabla H}^{2}}{N}\,,\qquad F_N^{3}=\frac{1}{N}G(\nabla H,\nabla H)\,.
\]
Observe that 
\[
\nabla_{X}G(\nabla H,\nabla H)=\nabla G(X,\nabla H,\nabla H)+2G(\nabla^{2}H(\,\cdot\,,X),\nabla H)\,,
\]
so that
\[
\abs{\nabla G(\nabla H,\nabla H)}\leq\abs{\nabla G}\abs{\nabla H}^{2}+2\abs{\nabla^{2}H}\abs{G}\abs{\nabla H}.
\]
Thus by the $\cG^{3}$-norm bound from \prettyref{thm:reg}, and the definition of $G$,
there is a $K_{p}$ such that eventually $\prob$-almost surely,
\[
\norm{F_N^{k}}_{\cG^{1}}\leq K_{p}\qquad\forall k \leq 3.
\]
Thus the Sobolev estimates \prettyref{eq:gradient-estimate-tightness} hold.
Consequently, this family is mild.

To show that $\{F_N^k\}_{k=1}^2$ are quasiautonomous, let 
\begin{align}\label{eq:g-for-p-spin}
g^{1}_N(t) & =\frac{1}{N}\left(\Delta H(X_{t})+pH(X_{t})\right)\,,\\
g^{2}_N(t) & =\frac{1}{N}\left(\frac{1}{2}\Delta\abs{\nabla H}^{2}-A\right)(X_t)\,,
\end{align}
where $A$ is as in \eqref{eq:A-def}. The processes $g^1,g^2$ satisfy \eqref{eq:g-decay} by \prettyref{lem:Lgrad-squared-estimate}.
Recalling the generator $L$ of the Langevin dynamics and writing out $LF_N^k$ using the identity,
\begin{align*}
\langle \nabla H, \nabla |\nabla H|^2 \rangle =  2 \nabla^2 H (\nabla H,\nabla H) = 2 G(\nabla H,\nabla H) -2 p\frac H{N} |\nabla H|^2\,,
\end{align*}
we see we have the splitting \eqref{eq:splitting}.  In particular, the family $\{F^k_N\}_{k=1}^2$ is $H$--quasiautonomous
with dynamical functions $\{\cF_k\}_{k=1}^2$ and auxiliary function $F^3_N$. Thus the conditions of
 \prettyref{thm:differentiability-general-lemma} are satisfied. Consequently the integral equations hold and
any weak limit point is such that $(u,v)$ are continuously differentiable. 
\end{proof}

\subsection{A differential inequality for the evolution of $(u(t),v(t))$}

We end this section by observing the following consequence of
\prettyref{thm:p-spin-tightness}.
Recall  $\Lambda_p>0$ from~\eqref{eq:Lambda-p-defn}, and recall from~\eqref{eq:cf-def}, 
\begin{align}
\cF_1 (u,v) & = - pu + \beta v\,, \nonumber \\
\cF_{2}^L(u,v) &= 2p(p-1) -2(p-1) v + 2pu(pu-\beta v) - 2\beta\Lambda_p v\label{eq:vdot-lb}\,, \\
\cF_{2}^U(u,v) &= 2p(p-1)-2(p-1)v + 2pu(pu-\beta v)+2\beta \Lambda_p v\label{eq:vdot-ub}\,,
\end{align}
Moreover, define the maps
\begin{equation}\label{eq:Phi-def}
\Phi_{L}(u,v)=\left(\begin{array}{c}
\cF_1\\
\cF_{2}^L
\end{array}\right)\,, \qquad\Phi_{U}(u,v)=\left(\begin{array}{c}
\cF_1\\
\cF_2^{U}
\end{array}\right)\,,
\end{equation}
so that for $(x_0,y_0)\in \mathbb R\times \mathbb R_+$, $A_L(t)$ is the solution to the initial value problem $\dot A_L(t) = \Phi_L(A_L(t))$ and $A_L(0)=(x_0,y_0)$, and $A_U(t)$ is similarly defined. 

Also, recall that 
for any sequence $x_N \in \cS^N$, we let $\sU((x_N))$ denote the set of all possible limiting laws of $(u_N(t),v_N(t))_{t\in [0,T]}$ started from $X_0=x_N$, and suppress the dependence on $T$ in the notation. 
We are then able to conclude the proof of the differential inequality. Here and in the following, inequalities for vectors
are to be interpreted as holding in both coordinates simultaneously.


\begin{proof}[\textbf{\emph{Proof of Theorem~\ref{thm:bounding-flows-thm}}}]
For every $T$ and every sequence of initial data $(x_N)$, the tightness of the laws of $(U_N(t))_{t\in [0,T]}$ was established in Theorem~\ref{thm:p-spin-tightness}.  Let $\mathscr U((x_N))$ be as in Definition~\ref{def:s-U}, which by the tightness must be non-empty, fix any $\mathscr L\in \mathscr U((x_N))$, and let $(U(t))_{t\in [0,T]}=(u(t),v(t))_{t\in [0,T]}$ be distributed according to $\mathscr L$. By \prettyref{thm:p-spin-tightness}, $U$ is $\mathscr L$-almost surely differentiable.

By \eqref{eq:u-integral-equation}, $\mathbb P$-almost surely, $\dot{u} = \cF_1(u,v)$ $\sL$-a.s.
By \eqref{eq:Lambda-p-defn},
\[
-\Lambda_p v \leq w \leq \Lambda_p v\,,
\]
$\sL$-a.s. Consequently, $\sL$-a.s.,
\[
\cF_{2}^L(u,v)\leq \cF_2(u,v,w)\leq \cF_{2}^U(u,v)\,,
\]
for all $t\geq 0$.
This combined with \eqref{eq:v-integral-equation}, 
implies that $\mathbb P$--almost surely, for every $T$, every sequence $x_N\in \cS^N$ and every $\sL\in \sU((x_N))$, if $U(t)\sim \mathscr L$,
\begin{equation*}
\sL\left(\Phi_L(U)\leq \dot{U} \leq \Phi_U(U)\right) = 1\,. \qedhere
\end{equation*}
\end{proof}


%

\section{Comparison theory for limiting dynamics}\label{sec:comparison-theory}
Before turning to the proof of Theorem~\ref{thm:full-phase-portrait}, 
we will first understand some general consequences of Theorem~\ref{thm:bounding-flows-thm}
for trajectories of the limiting dynamics $(u(t),v(t))$ (see Figure~\ref{fig:trajectory-comparison}).

\subsection{A trajectory-wise comparison}
Our main result in this section is a means to use the integral curves of two ``bounding flows''
to bound that of our dynamics. Informally, the goal is to confine integral curves of dynamical systems satisfying the differential inequality of Theorem~\ref{thm:bounding-flows-thm} by the integral curves of the lower and upper bounding dynamical systems. 

We wish to compare any weak limit, $U\sim \sL$, of $(u_N(t),v_N(t))$ to these systems. 
To show this comparison result, we first observe the following basic
fact from calculus. 
Recall the notation $v\leq_{2}w$ if $v_{1}=w_{1}$
and $v_{2}\leq w_{2}$ and define $\geq_{2}$ similarly. 
\begin{lemma}
\label{lem:graph-comparison}Let $X,Y:[0,T]\to\R^{2}$ be $C^{1}$.
Let $\Psi:\R^{2}\to\R^{2}$ be $C^{1}$ be such that $\dot{Y}=\Psi(Y)$.
Let $E_{+}=\{\Psi_{1}>0\}$ and $E_{-}=\left\{ \Psi_{1}<0\right\} .$
We then have the following. Suppose that
\begin{itemize}
\item $X(0)=Y(0)\in E_{+}\cup E_{-}$ ,
\item For $t>0$, if $X(0)\in E_+$, then $\Psi_{1}(X),$$\Psi_{1}(Y)>0$, and if $X(0)\in E_+$ then $\Psi_1(X),\Psi_1(Y)<0$,
\item $X$ satisfies the differential inequality $\dot{X}\leq_{2}\Psi(X)$
 (resp. $\dot{X}\ge_{2}\Psi(X)$).
\end{itemize}
Then the functions $f_{X}(u)=X_{2}\circ X_{1}^{-1}(u)$ and $f_{Y}(u)=Y_{2}\circ Y_{1}^{-1}(u)$
are well-defined, differentiable for $u\neq X_{1}(0)$, and $f_{X}(u)\leq f_{Y}(u)$
$($resp. $f_{X}(u)\geq f_{Y}(u))$ on the intersection of their domains. 
\end{lemma}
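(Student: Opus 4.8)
The plan is to reduce the statement to a scalar comparison of the slope functions $f_X$ and $f_Y$. First I would argue that each of $X_1$ and $Y_1$ is strictly monotone on $[0,T]$. Indeed, by hypothesis the first components of the differential (in)equalities satisfy $\dot X_1 = \Psi_1(X)$ and $\dot Y_1 = \Psi_1(Y)$ (the differential inequality is an equality in the first coordinate since $\leq_2$ and $\geq_2$ fix the first component), and by the second hypothesis $\Psi_1(X),\Psi_1(Y)$ have a fixed sign, say positive, for $t>0$ when $X(0)\in E_+$. Hence $X_1$ and $Y_1$ are $C^1$ with strictly positive derivative on $(0,T]$, so they are invertible onto their images, and $f_X = X_2 \circ X_1^{-1}$, $f_Y = Y_2\circ Y_1^{-1}$ are well-defined and $C^1$ for $u\neq X_1(0)$ by the inverse function theorem. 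The case $X(0)\in E_-$ is identical after reversing signs (both $X_1,Y_1$ strictly decreasing). Note that $X_1(0)=Y_1(0)$ so the two graphs emanate from the common point $(X_1(0),X_2(0))$.

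Next I would compute the derivatives of $f_X$ and $f_Y$ by the chain rule. Writing $u = X_1(t)$, we get $f_X'(u) = \dot X_2(t)/\dot X_1(t) = \dot X_2(t)/\Psi_1(X(t))$, and similarly $f_Y'(u) = \dot Y_2(t')/\Psi_1(Y(t'))$ where $t'$ is the time with $Y_1(t') = u$. For $f_Y$ this simplifies: since $\dot Y = \Psi(Y)$, we have $f_Y'(u) = \Psi_2(Y(t'))/\Psi_1(Y(t')) = \Psi_2(u, f_Y(u))/\Psi_1(u,f_Y(u))$, so $f_Y$ solves a scalar autonomous ODE $f_Y' = g(u, f_Y)$ with $g(u,v) := \Psi_2(u,v)/\Psi_1(u,v)$, which is $C^1$ (hence locally Lipschitz in $v$) on the region where $\Psi_1 > 0$. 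For $f_X$, the differential inequality $\dot X \leq_2 \Psi(X)$ gives $\dot X_2(t) \leq \Psi_2(X(t))$, so dividing by $\Psi_1(X(t)) > 0$ yields $f_X'(u) \leq \Psi_2(u, f_X(u))/\Psi_1(u, f_X(u)) = g(u, f_X(u))$; that is, $f_X$ is a subsolution of the same scalar ODE.

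The conclusion $f_X(u)\leq f_Y(u)$ then follows from the standard comparison principle for scalar ODEs: a subsolution starting at (or below) the value of a solution stays below it, since $g$ is Lipschitz in its second argument on the relevant region. Concretely, I would set $w(u) = f_X(u) - f_Y(u)$, note $w(X_1(0)) = 0$ and $w'(u) \leq g(u,f_X(u)) - g(u,f_Y(u)) \leq L\, |w(u)|$ locally, and apply a Gronwall-type argument on the side $u > X_1(0)$ (which is the relevant side since $X_1$ is increasing) to get $w \leq 0$; the reverse inequality $\dot X \geq_2 \Psi(X)$ gives the reverse conclusion by the same argument with inequalities flipped. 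In the $E_-$ case $X_1$ is decreasing, so the common starting point is the right endpoint of the domain and one runs the comparison toward decreasing $u$; one must be slightly careful that decreasing $u$ combined with the sign flip from dividing by $\Psi_1 < 0$ preserves the direction of the inequality, but this bookkeeping works out. The main obstacle is precisely this sign/orientation bookkeeping — ensuring that ``$\dot X_2 \leq \Psi_2(X)$'' translates into ``$f_X$ is a subsolution'' with the correct inequality direction in both the $E_+$ and $E_-$ cases, and handling the mild singularity at $u = X_1(0)$ where $f_X, f_Y$ are only one-sided differentiable — but continuity of $f_X, f_Y$ up to that point (from continuity of $X, Y$) together with $f_X(X_1(0)) = f_Y(X_1(0))$ lets the Gronwall argument start cleanly there.
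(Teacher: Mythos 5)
Your proposal is correct, and its core reduction is the same as the paper's: in both arguments one uses that the first coordinate of the inequality $\leq_2$ is an equality, so $\dot X_1=\Psi_1(X)$, $\dot Y_1=\Psi_1(Y)$ have a fixed sign, $X_1,Y_1$ are invertible, and the problem becomes a comparison of the scalar slope functions, with $f_Y'(u)=\Psi_2(u,f_Y(u))/\Psi_1(u,f_Y(u))$ and $f_X'(u)\leq \Psi_2(u,f_X(u))/\Psi_1(u,f_X(u))$ (sign flip handled in the $E_-$ case, as you note). Where you diverge is the final step: the paper argues by contradiction at the last crossing point $v=\sup\{u\leq w: f_X(u)=f_Y(u)\}$, using only that $f_X'(u)\leq f_Y'(u)$ whenever $f_X(u)=f_Y(u)$, whereas you recast $f_X$ as a subsolution and $f_Y$ as a solution of the scalar ODE $f'=g(u,f)$, $g=\Psi_2/\Psi_1$, and invoke the Lipschitz/Gronwall comparison principle. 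Your route is arguably the more robust one: a pointwise derivative comparison at crossing points alone is the kind of argument that fails without a Lipschitz-type hypothesis (think of subsolution versus solution of $y'=3|y|^{2/3}$ from $0$), so making the local Lipschitzness of $g$ explicit is a genuine strengthening of the write-up, and it is available here precisely because $\Psi$ is $C^1$ and $\Psi_1\neq 0$ along the relevant trajectories.

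One small caveat to make your Gronwall step airtight: the estimate $|g(u,f_X(u))-g(u,f_Y(u))|\leq L\,|w(u)|$ needs $g$ to be Lipschitz in its second argument on a set containing the vertical segment between $(u,f_X(u))$ and $(u,f_Y(u))$, but the hypotheses only give $\Psi_1>0$ along the two trajectories (and at the common starting point), not on the region between the graphs, where $\Psi_1$ could a priori vanish. This is repaired exactly as in the paper's localization: if $f_X$ ever exceeded $f_Y$, pass to the last crossing point $u_2$ before the offending $u_1$; the point $(u_2,f_Y(u_2))$ lies on the trajectory of $Y$ (or is the initial point), so $\Psi_1>0$ on a ball around it, both graphs stay in that ball for $u$ slightly larger than $u_2$ by continuity, and your Gronwall argument on $(u_2,u_2+\delta)$ with $w(u_2)=0$, $w>0$, $w'\leq Lw$ gives the contradiction. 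With that adjustment your proof is complete.
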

\begin{proof}
Without loss of generality, take $X(0)=Y(0)=(0,0)$. Furthermore, it suffices
to consider the a case where $(0,0)\in E_+$ and $\dot X\leq_2 \Psi(X)$, as the others are identical. By the positivity
assumption for $\Psi_1(X)$ and $\Psi_1(Y)$, $X_{1}(t)$ and $Y_{1}(t)$ are invertible so that the
functions $f_{X}$ and $f_{Y}$ are well-defined. Furthermore, the
inverse functions are differentiable for $u>0$. It remains to show
the inequality between $f_X$ and $f_Y$.

Suppose that there is some $w$ for which $f_{X}(w)>f_{Y}(w).$ Observe
that for every $u>0$ for which $f_{X}(u)=f_{Y}(u)$, we have

\begin{equation}
f_{X}'(u)\leq\frac{\Psi_{2}\left(u,f_{X}(u)\right)}{\Psi_{1}(u,f_{X}(u))}=f_{Y}'(u)\,,\label{eq:graph-eq}
\end{equation}
by the third assumption. Thus $w>0$. On the other hand, let $v=\sup\{u\leq w:f_{X}(u)=f_{Y}(u)\}$.
Then by continuity of $f_{X},f_Y$ and maximality of $v$, we have that
$f_{X}(u)>f_{Y}(u)$ for all $u\in (v,w)$. On the other hand, by continuity
of $f_{X},f_Y$ and \eqref{eq:graph-eq}, we have that $f_{X}'(v)\leq f_{Y}'(v)$.
This is a contradiction. 
\end{proof}

In the following, we use the convention that $x>\infty$ means $x=\infty$.
We can deduce the following corollary of the preceding. Recall from Section~\ref{subsec:phase-portrait-defs}, the definitions of $A_L(t), A_U(t)$, the graphs of their trajectories, $\gamma_L(u; (x_0,y_0))$ and $\gamma_U(u,(x_0,y_0))$, and the sets $W_-$ and $W_+$ defined in~\eqref{eq:W-pm}.  

\begin{corollary}
\label{cor:graph-cor} Suppose that $X:[0,T]\to \mathbb R^2$ is $C^1$ and satisfies \condI . Then,
\begin{enumerate}
\item If $X(0)\in W_{\pm}$, then $(X(t))_{t\leq \tau_{W_\pm^c}}$ is the graph of a
function $\gamma(\,\cdot\,;X(0))$.
\item The domains of $\gamma,\gamma_{L},$ and $\gamma_{U}$ satisfy 
\[\mbox{Dom}(\gamma_{L}(\,\cdot\,;X(0)))\subseteq \mbox{Dom}(\gamma(\,\cdot\,;X(0)))\subseteq Dom(\gamma_{U}(\,\cdot\,;X(0)))\,,\]
when $X(0)\in W_{+}$, and the reverse inclusions are satisfied when $X(0)\in W_{-}$.
\item If $X(0)\in W_{+}\cup W_{-}$, then
\[
\begin{cases}
\gamma_{L}(u;X(0))\leq\gamma(u;X(0)) & \quad \forall u \in Dom(\gamma_{L}(\,\cdot\,;X(0)))\cap Dom(\gamma(\,\cdot\,;X(0)))\\
\gamma(u;X(0))\leq\gamma_{U}(u;X(0)) & \quad \forall u \in Dom(\gamma_{U}(\,\cdot\,;X(0)))\cap Dom(\gamma(\,\cdot\,;X(0)))
\end{cases}\,.
\] 
\item If $X(0)\in\left\{ \cF_{1}=0\right\} $ and $X_{1}(0)<u_{c}$ $($resp.
$X_{1}(0)>\bar{u}_{c})$ , then $\gamma,\gamma_{L},$ and $\gamma_{U}$
are still well-defined and items 2 and 3 still hold. 
\end{enumerate}
\end{corollary}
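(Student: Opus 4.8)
The plan is to derive all four items from the elementary calculus comparison of Lemma~\ref{lem:graph-comparison}, once one notices that \condI\ forces $X$ and the two bounding systems to share the \emph{same} first velocity component: since $\Phi_L$ and $\Phi_U$ have common first coordinate $\cF_1$, and $v\leq_2 w$ requires $v_1=w_1$, \condI\ yields $\dot X_1(t)=\cF_1(X(t))$ for all $t$, exactly as for $A_L$ and $A_U$. Item~1 then follows directly: if $X(0)\in W_+$ then $\cF_1(X(0))>0$, so by continuity $\cF_1(X(t))>0$ for $t<\tau_{W_+^c}$, hence $X_1$ is strictly increasing and invertible on $[0,\tau_{W_+^c})$ and $(X(t))_{t\leq\tau_{W_+^c}}$ is the graph of $\gamma(\,\cdot\,;X(0)):=X_2\circ X_1^{-1}$; the case $X(0)\in W_-$ is identical with $X_1$ strictly decreasing, and the same argument re-establishes that $\gamma_L$ and $\gamma_U$ are well defined.

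For items~2 and~3, take $X(0)\in W_+$ (the case $W_-$ is symmetric, both inclusions in item~2 reversing). I would apply Lemma~\ref{lem:graph-comparison} twice on the common domain of the relevant graphs: with $\Psi=\Phi_L$, $Y=A_L$, and the lower half $\dot X\geq_2\Phi_L(X)$ of \condI\ (the positivity hypothesis holding because $\cF_1>0$ along $X$ and $A_L$ up to their exits from $W_+$) we get $\gamma\geq\gamma_L$; with $\Psi=\Phi_U$, $Y=A_U$, and the upper half $\dot X\leq_2\Phi_U(X)$ we get $\gamma\leq\gamma_U$. This is item~3. For the domain nesting in item~2, note that inside $\R\times\R_+$ one has $W_+=\{v>\ell_1(u)\}$, and that $\cF_2^L(u,0)=2p(p-1)+2p^2u^2>0$ forces $\dot v>0$ on $\{v=0\}$, so the only way to leave $W_+$ is to meet the line $\ell_1$. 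Since $\gamma_L>\ell_1$ throughout the interior of its domain and $\gamma\geq\gamma_L$ there, the curve $\gamma$ cannot leave $W_+$ before $\gamma_L$ does, so $\mathrm{Dom}(\gamma_L)\subseteq\mathrm{Dom}(\gamma)$; symmetrically, $\gamma\leq\gamma_U$ and $\gamma_U>\ell_1$ on the interior of its domain give $\mathrm{Dom}(\gamma)\subseteq\mathrm{Dom}(\gamma_U)$. When $\bar u_c<\infty$, the bounding flows do not reach but only limit to their fixpoints $z_c,\bar z_c\in\ell_1$; a squeeze against $\ell_1$ then produces the same domain endpoints, by continuity.

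For item~4 the new point is that $X(0)$ lies on $\ell_1=\{\cF_1=0\}$, so $\dot X_1(0)=0$ and one must first check the curves enter $W_\pm$. If $X_1(0)<u_c$, the sign analysis underlying Remark~\ref{rem:A-0-definition} gives $\cF_2^L(u,\ell_1(u))>0$ for $u<u_c$ (indeed, a one-line linear computation gives $\cF_2^L(u,\ell_1(u))=2p[(p-1)-((p-1)\beta^{-1}+\Lambda_p)u]$, which vanishes exactly at $u_c$), hence $\dot X_2(0)\geq\cF_2^L(X(0))>0$; since $\dot X_1(0)=0$, the derivative of $t\mapsto X_2(t)-\ell_1(X_1(t))$ at $0$ equals $\dot X_2(0)>0$, so $X(t)\in W_+$ for small $t>0$ and $\cF_1(X(t))>0$ thereafter. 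Thus $X_1$ is again strictly increasing on $[0,\tau_{W_+^c})$ and $\gamma$ is well defined and differentiable for $u\neq X_1(0)$; the same holds for $A_L$ and $A_U$, whose second velocities at $X(0)$ are $\cF_2^L(X(0))>0$ and $\cF_2^U(X(0))\geq\cF_2^L(X(0))>0$, so items~2 and~3 go through verbatim---Lemma~\ref{lem:graph-comparison} uses differentiability only for $u\neq X_1(0)$, so the vanishing of $\dot X_1$ at the left endpoint is harmless. The case $X_1(0)>\bar u_c$ is the mirror image, using $\cF_2^U(u,\ell_1(u))<0$ for $u>\bar u_c$ (again linear in $u$) to push all three curves into $W_-$.

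The main obstacle I anticipate is the domain-nesting step of item~2, together with its interaction with item~4: one must be certain that crossing $\ell_1$ is the \emph{only} mechanism for exiting $W_\pm$ (so that escape through $\{v=0\}$ is excluded), that the ordering $\gamma_L\leq\gamma\leq\gamma_U$ transfers to nested domains even when a bounding flow merely limits to, rather than reaches, its fixpoint on $\ell_1$, and that the degeneracy $\dot X_1(0)=0$ in item~4 does not destroy the graph parametrization. By contrast, the sandwich bounds of item~3 are a routine invocation of Lemma~\ref{lem:graph-comparison} once the sign of $\cF_1$ is tracked.
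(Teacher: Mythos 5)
Your proof is correct and follows essentially the same route as the paper: items (1)--(3) come from applying Lemma~\ref{lem:graph-comparison} with $\Psi=\Phi_L$ and $\Psi=\Phi_U$ to the curves truncated at $\tau_{W_\pm^c}$ (using that \condI\ forces $\dot X_1=\cF_1(X)$), and item (4) from the sign of $\cF_2^{L/U}$ on $\ell_1$ for $X_1(0)<u_c$ (resp.\ $X_1(0)>\bar u_c$), which pushes all three curves instantly into $W_\pm$ so the lemma's hypotheses hold for $t>0$. The extra details you supply (the explicit linear formula for $\cF_2^L(u,\ell_1(u))$, ruling out exit through $\{v=0\}$, and the fixpoint-limiting domains) are consistent with, and slightly more explicit than, the paper's argument.
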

\begin{proof}
By \condI~ and the assumptions of items (1)-(3), we may apply \prettyref{lem:graph-comparison} by truncating $X$ at $\tau_{W_\pm^c}$,
from which items (1)-(3) follow. For item (4) we consider the case $X(0)<u_{c}$ and when $\bar u_c$ is finite, the case $X(0)>\bar u_c$ is analogous. In this case, by \prettyref{eq:Diff-ineq}, $\mathcal F_1 (X(0))=0$ and
\[
0<\cF_{2}^{L}(X(0))\leq\cF_{2}(X(0))\leq\cF_{2}^{U}(X(0))\,.
\]
Thus by continuity and the definitions of $\cF_{i}^{L/U}$ it follows
that $\cF_{1}(X(t)),\cF_{1}(A_{L}(t)),$ and $\cF_{1}(A_{U}(t))$ are all strictly
positive for $t>0$. Thus the conditions of \prettyref{lem:graph-comparison}
still hold. 
\end{proof}

\subsection{A pointwise comparison}
One might also be interested in obtaining pointwise-in-time comparisons
between the flows as in usual comparison theory. To this end we observe the following which
is not used in the proof of the phase diagram, but we believe is illustrative.
Define the region 
\begin{align*}
V_- = \{(u,v) \in\mathbb R_+^2: v\leq 2pu/\beta\}\,.
\end{align*}

\begin{figure}
\centering
\begin{tikzpicture}
\node at (0,0) {\includegraphics[width=0.30\textwidth]{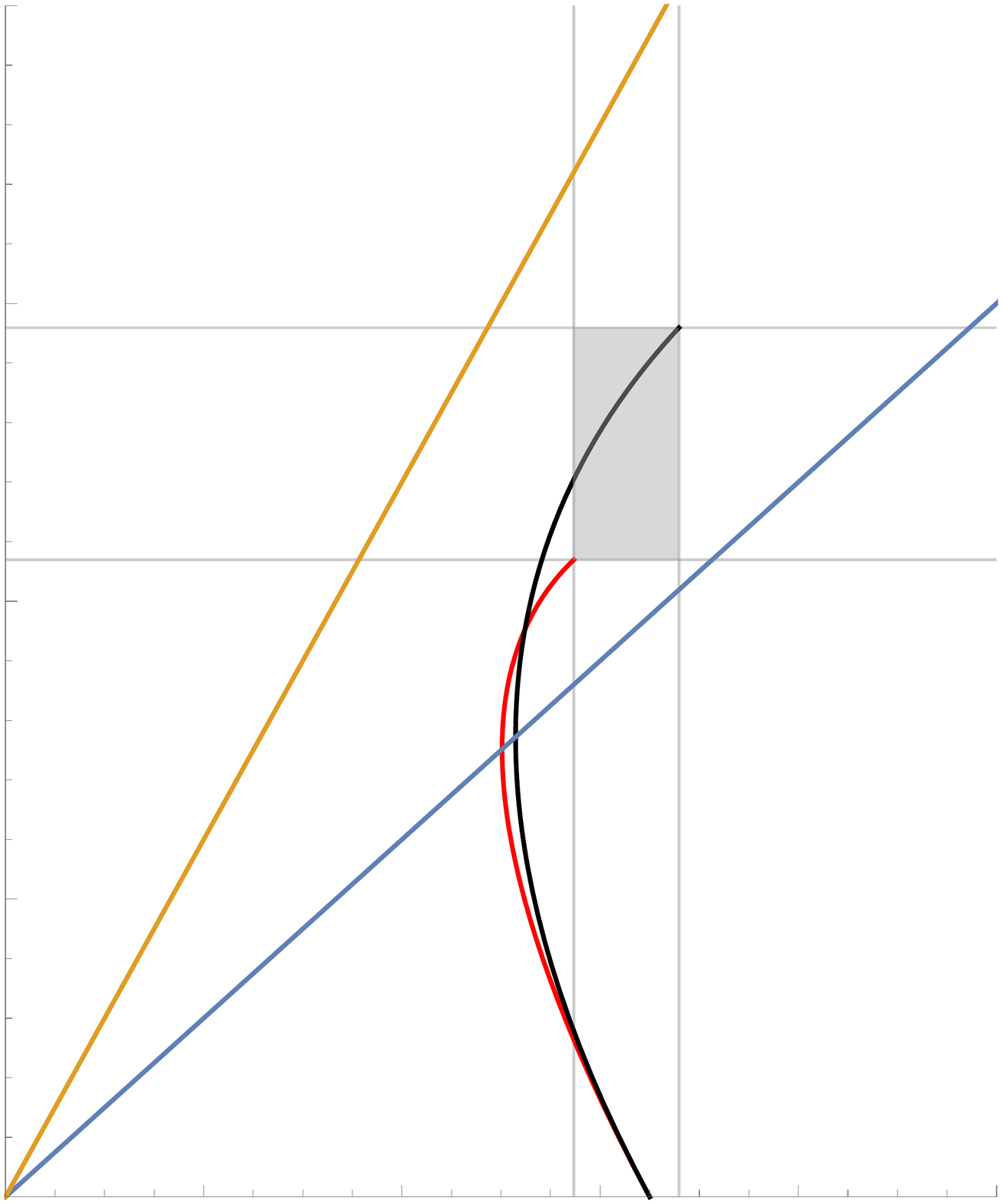}};
\node at (5.5,0) {\includegraphics[width=0.30\textwidth]{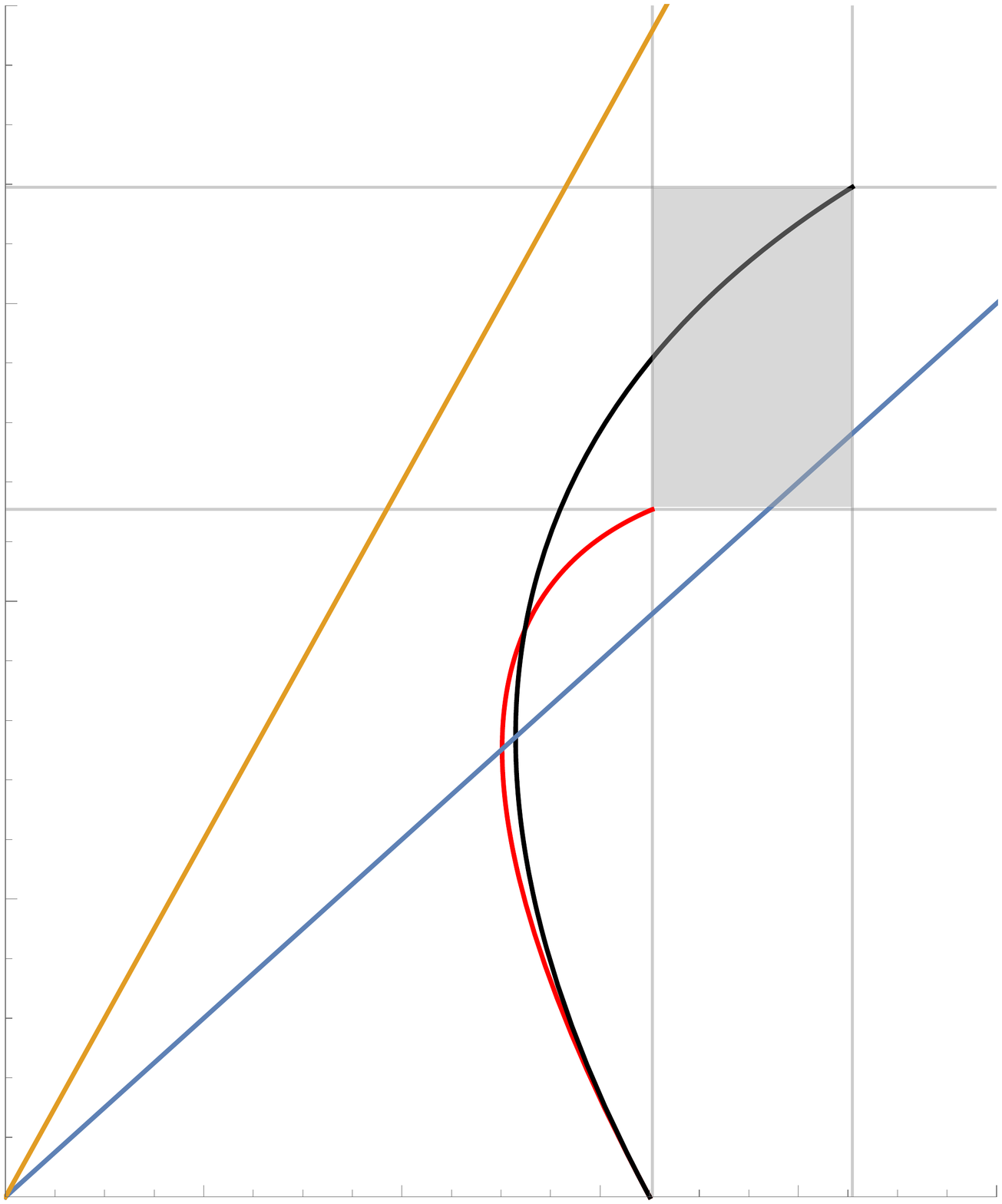}};
\node at (2,-2.4) {$\frac{H}{N}$};
\node at (-1.8,2.5) {$\frac{|\nabla H|^2}{N}$};
\node at (7.5,-2.4) {$\frac{H}{N}$};
\node at (5.5-1.8,2.5) {$\frac{|\nabla H|^2}{N}$};
\end{tikzpicture}
\caption{Theorem~\ref{thm:rectangle-condition} shows that $U(t)=(u(t),v(t))$ is $\mathscr L$-a.s.\ contained in the rectangle drawn out by $A_L(t)$ (red) and $A_U(t)$ (black) for all $t$ such that the rectangle is contained in $V_-$ (below the gold line).}\label{fig:trajectory-comparison}
\end{figure}

\begin{theorem}\label{thm:rectangle-condition}
Suppose that $X$ satisfies \condI. If $X(0)\in V_-^o$ then: 
\begin{enumerate}
\item  $X(t)\geq A_L(t)$ for every $t \leq \tau^L_{V_-^c} \wedge \tau_{V_-^c}$, and  $X(t) \leq A_U(t)$ for every $t \leq \tau^U_{V_-^c} \wedge \tau_{V_-^c}$.
\item  In particular, if $\tau^{\square}_B$ is the hitting time of $B$ for $(A_L^1(t),A_U^2(t))$, then for every $t \leq \tau^{\square}_{V_-^c}$.
\begin{align*}
A_L(t) \leq X(t) \leq A_U(t)\,.
\end{align*}
\end{enumerate}
As a result, if $X(0)$ is such that $\tau_{V_-^{c}}^\square= \infty$, then $A_L(t)\leq X(t)\leq A_U(t)$ for all $t\geq 0$.
\end{theorem}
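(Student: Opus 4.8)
The plan is to prove a general componentwise comparison principle for $C^1$ curves satisfying \condI, where the role of the region $V_-$ is precisely to control the sign of the only ``off-diagonal'' coupling in the bounding vector fields. Recall that $\cF_1(u,v) = -pu+\beta v$ depends on both coordinates, while the dependence of $\cF_2^{L/U}(u,v)$ on $v$ enters both through $-2(p-1)v-2\beta\Lambda_p v$ and through the term $-2p\beta uv$. The key monotonicity observation is that on $V_-^o = \{v < 2pu/\beta\}$ (equivalently $2pu - \beta v > 0$, with $u>0$), the map $v\mapsto \cF_1(u,v)$ is increasing, and one also needs to track how $\cF_2^{L/U}$ responds to an increase in $u$ with $v$ held fixed; since $\partial_u \cF_2^{L/U} = 4p^2 u - 2p\beta v = 2p(2pu-\beta v) > 0$ on $V_-^o$, both bounding fields are increasing in $u$ there as well. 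So on $V_-$ the system $\Phi_L, \Phi_U$ is \emph{cooperative} (quasi-monotone in the sense of Kamke--M\"uller), which is exactly the hypothesis under which the classical differential-inequality comparison theorem for ODE systems applies.

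First I would set up the comparison for the lower bound. Fix $X$ satisfying \condI~with $X(0)\in V_-^o$, and let $\sigma = \tau^L_{V_-^c}\wedge\tau_{V_-^c}$. On $[0,\sigma)$, both $A_L(t)$ and $X(t)$ lie in $V_-$. Write $D(t) = X(t) - A_L(t)$, so $D(0)=0$. From \condI~we have $\dot X(t) \geq_2 \Phi_L(X(t))$ componentwise is too weak directly; instead I would invoke the standard vector comparison lemma (see e.g. Walter, \emph{Ordinary Differential Equations}, or Hirsch--Smith): if $\dot X \geq \Phi_L(X)$ fails to hold in both coordinates we cannot conclude, but \condI~only gives $\dot X_1 = \cF_1(X)$ (equality in the first coordinate, by~\eqref{eq:Diff-ineq} with $\leq_2$ on both sides forcing $X_1$-coordinate equality) and $\dot X_2 \geq \cF_2^L(X)$. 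Thus $X$ is a supersolution of the cooperative system $\Phi_L$ in the appropriate sense: its first coordinate exactly solves the first equation driven by its own second coordinate, and its second coordinate dominates. The cooperativity on $V_-$ then yields $X(t)\geq A_L(t)$ componentwise on $[0,\sigma)$, by the usual argument: suppose $t_0$ is the first time some coordinate of $D$ hits $0$ from the positive side; at $t_0$, say $D_1(t_0)=0, D_2(t_0)\geq 0$, then $\dot D_1(t_0) = \cF_1(X(t_0)) - \cF_1(A_L(t_0)) = \beta(X_2(t_0)-A_{L,2}(t_0)) = \beta D_2(t_0)\geq 0$; and if $D_2(t_0)=0,D_1(t_0)\geq 0$ then $\dot D_2(t_0)\geq \cF_2^L(X(t_0))-\cF_2^L(A_L(t_0))$ which, since $X_2(t_0)=A_{L,2}(t_0)$ and $\partial_u\cF_2^L>0$ on $V_-$, is $\geq 0$. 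The standard minimum-principle/Gronwall argument (perturbing $\Phi_L$ to $\Phi_L + \eps(1,1)$ and sending $\eps\to 0$, or a direct contradiction as in Lemma~\ref{lem:graph-comparison}) closes item (1)'s lower half; the upper half with $A_U$ and $\tau^U_{V_-^c}\wedge\tau_{V_-^c}$ is symmetric, using $\dot X_2 \leq \cF_2^U(X)$.

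For item (2), let $(A_L^1(t), A_U^2(t))$ be the mixed curve and $\tau^\square_{V_-^c}$ its exit time from $V_-$. On $[0,\tau^\square_{V_-^c})$ I claim $X(t)\leq A_U(t)$: we need $\tau^U_{V_-^c}\wedge\tau_{V_-^c}\geq \tau^\square_{V_-^c}$, which follows once we know $X(t)\in V_-$ there — and $X(t)\leq A_U(t)$ together with $X(t)\geq A_L(t)$ pins $X(t)$ between the two flows coordinatewise, so $X(t)$ lies in the rectangle $[A_L^1,A_U^1]\times[A_L^2,A_U^2]$; the corner $(A_L^1,A_U^2)$ being in $V_-$ and the shape of $V_-$ (it is ``lower-left closed'' under $\leq$: if $(u,v)\in V_-$ and $u'\geq u, v'\leq v$ then... actually one must check this — $V_-=\{v\leq 2pu/\beta\}$ is closed under increasing $u$ and decreasing $v$) gives that the whole rectangle, hence $X(t)$, stays in $V_-$. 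This is a small bootstrapping argument: the three statements ``$X\geq A_L$'', ``$X\leq A_U$'', ``$X\in V_-$'' are proved simultaneously up to the first failure time, which one then shows cannot be $< \tau^\square_{V_-^c}$. The final sentence of the theorem is the special case $\tau^\square_{V_-^c}=\infty$.

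\textbf{Main obstacle.} The genuine difficulty is \emph{not} the ODE comparison machinery, which is classical once cooperativity is identified, but rather the careful handling of the exit times and the geometry of $V_-$ in item (2): one must verify that the rectangle spanned by $A_L$ and $A_U$ stays inside the region where cooperativity holds, and this requires checking that $V_-$ has the right order-theoretic shape (closed under $u\uparrow$, $v\downarrow$) and simultaneously bootstrapping the two-sided bound with the in-$V_-$ containment so that no exit time is prematurely triggered. The subtlety is that $\tau^U_{V_-^c}$, $\tau_{V_-^c}$, and $\tau^\square_{V_-^c}$ are a priori different, and one needs the comparison inequalities themselves to reconcile them; I would handle this by a continuity/first-exit argument rather than trying to order the times in advance.
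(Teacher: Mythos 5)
Your proposal is correct and follows essentially the same route as the paper: the paper verifies that $\Phi_L$ and $\Phi_U$ are quasi-increasing (cooperative) precisely on $V_-$ and then invokes Chaplygin's comparison lemma (Lemma~\ref{lem:chaplygin}), which is exactly the Kamke--M\"uller argument you reprove inline via the first-crossing computation $\partial_v\cF_1=\beta>0$ and $\partial_u\cF_2^{L/U}=2p(2pu-\beta v)\geq 0$ on $V_-$. Your bootstrapping of the exit times in item (2), using that $V_-$ is closed under increasing $u$ and decreasing $v$ so the rectangle spanned by $A_L$ and $A_U$ stays in the cooperative region until the corner $(A_L^1,A_U^2)$ exits, just fills in details the paper's terse proof leaves implicit.
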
 

Observe the following, whose proof is immediate from the definition of $A_L$ and $A_U$.
\begin{observation}\label{lemma:interior-tau}
For every $(x_0,y_0)\in W^o_+$, if $X(0)=(x_0,y_0)$, we have that $\tau_{V_-^c}^\square >0$.
\end{observation}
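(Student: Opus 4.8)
The plan is to read this off directly from the continuity of the two bounding flows; the assertion is a non-degeneracy remark rather than a computation, which is why the authors call it ``immediate from the definitions of $A_L$ and $A_U$.'' First I would recall, as noted right after \eqref{eq:A-L-def}, that $\Phi_L$ and $\Phi_U$ are locally Lipschitz, so the two initial value problems with the common datum $A_L(0)=A_U(0)=(x_0,y_0)$ have unique solutions on a common maximal interval $[0,\tau_\ast)$ with $\tau_\ast>0$, and both $t\mapsto A_L(t)$ and $t\mapsto A_U(t)$ are continuous there. Hence the corner path $t\mapsto\big(A_L^1(t),A_U^2(t)\big)$ is continuous on $[0,\tau_\ast)$ and takes the value $(x_0,y_0)$ at $t=0$.

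Next I would invoke the hypothesis. The condition $(x_0,y_0)\in W_+^o$ means $\cF_1(x_0,y_0)>0$, i.e.\ $y_0>\ell_1(x_0)=px_0\beta^{-1}$ strictly; combined with the standing assumption of \prettyref{thm:rectangle-condition} that $X(0)=(x_0,y_0)\in V_-^o$, the point $(x_0,y_0)$ lies in the open set $V_-^o\subset\R\times\R_+$. The sole role of the condition $(x_0,y_0)\in W_+^o$ is that $\cF_1>0$ there, so that $A_L^1$—and, via \prettyref{cor:graph-cor}, the first coordinate of $X$ itself—is strictly increasing near $t=0$, which is exactly what makes the rectangle comparison of \prettyref{thm:rectangle-condition} substantive on $[0,\tau^\square_{V_-^c}]$. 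By openness of $V_-^o$ together with the continuity established above, there is $\eps>0$ with $\big(A_L^1(t),A_U^2(t)\big)\in V_-^o\subset V_-$ for all $t\in[0,\eps)$; hence the corner path never meets $V_-^c$ on $[0,\eps)$, and therefore $\tau^\square_{V_-^c}\geq\eps>0$.

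The one place where care is needed—and the step I would treat as the (minor) obstacle—is the boundary case $(x_0,y_0)\in\partial V_-\cap W_+^o$, i.e.\ $\beta y_0=2px_0$ with $x_0>0$. There openness is not available, and one must instead track $\tfrac{d}{dt}\big(\beta A_U^2(t)-2pA_L^1(t)\big)\big|_{t=0}=\beta\cF_2^U(x_0,y_0)-2p\cF_1(x_0,y_0)$; using $\cF_1(x_0,y_0)=px_0$ on this ray, the corner path moves into $V_-$ precisely when $\beta\cF_2^U(x_0,y_0)\leq 2p^2x_0$, which need not hold for small $x_0$. Accordingly I would state and prove the observation under the interior hypothesis $X(0)\in V_-^o$ that is in any case in force throughout \prettyref{thm:rectangle-condition}, for which the two-line continuity argument above suffices and the boundary subtlety does not arise.
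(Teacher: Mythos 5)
Your core argument is exactly the one the paper has in mind when it calls the observation ``immediate from the definition of $A_L$ and $A_U$'': local Lipschitzness of $\Phi_L,\Phi_U$ gives continuous solutions with $(A_L^1(0),A_U^2(0))=(x_0,y_0)$, and if that point lies in the open set $V_-^o$ then the corner path stays in $V_-$ for a positive time, so $\tau^\square_{V_-^c}>0$; since the paper supplies no further detail, there is nothing more to match. Your caveat about the hypothesis is also well taken, and is the only substantive point of divergence from the statement as printed: $W_+^o=\{\beta v>pu\}$ is not contained in $V_-=\{\beta v\le 2pu\}$, so membership in $W_+^o$ alone does not place the starting corner in $V_-^o$ --- for $\beta y_0>2px_0$ one has $(x_0,y_0)\in V_-^c$ and $\tau^\square_{V_-^c}=0$ outright, and your first-order computation on the ray $\beta y_0=2px_0$ is correct: $\frac{d}{dt}\bigl(\beta A_U^2-2pA_L^1\bigr)\big|_{t=0}=\beta\cF_2^U(x_0,y_0)-2p^2x_0$, which is strictly positive for small $x_0$ (it tends to $2\beta p(p-1)$), so the corner path exits $V_-$ instantly there as well. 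Thus the observation is only literally valid under the interiority assumption $X(0)\in V_-^o$ that is in force in Theorem~\ref{thm:rectangle-condition}, which is precisely the reading you adopt; your proof of that corrected statement is complete, and the additional interpretive remark about $\cF_1>0$ making the comparison ``substantive'' is harmless though not needed for the argument.
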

The proof of the Theorem~\ref{thm:rectangle-condition} uses the classical comparison theorem of Chaplygin (see, e.g., \cite{Szarski65}). 

\begin{lemma}[Chaplygin's Lemma]\label{lem:chaplygin}
Let $\Omega\subset \R^2$ and let $A,B:[0,T]\to\Omega$ be continuously differentiable. 
Suppose that the following hold: 
\begin{enumerate}
\item $B(0)\leq A(0)$ $($resp.\ $B(0)\geq A(0)$$)$
\item There is a locally Lipschitz $\Psi:\R^{2}\to\R^{2}$ such that: $\dot{B}=\Psi(B)$
and $\dot{A}\geq\Psi(A(t))$ for every $t\leq T$ $($resp. $\dot{A}\leq\Psi(A(t))$$)$
\item  $\Psi$ is quasi-increasing: that is,  for each $x$, the map $\Psi_1(x, \cdot)$
is monotone increasing, and for every $y$,  the map $\Psi_2(\cdot, y)$ is
monotone increasing.
\end{enumerate}
Then $A(t)\geq B(t)$ \emph{(}resp. $B(t)\geq A(t)$\emph{)} for all $t\leq T$. 
\end{lemma}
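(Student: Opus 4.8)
The plan is to prove the first alternative ($B(0)\le A(0)$, $\dot A\ge\Psi(A)$ on $[0,T]$, conclude $A\ge B$ componentwise on $[0,T]$) by the standard perturbation argument for quasimonotone (Kamke--M\"uller) systems; the ``resp.'' alternative then follows immediately by applying this to $\tilde A=-A$, $\tilde B=-B$ and $\tilde\Psi(x)=-\Psi(-x)$, which is again quasi-increasing and satisfies $\dot{\tilde B}=\tilde\Psi(\tilde B)$, $\dot{\tilde A}\ge\tilde\Psi(\tilde A)$, $\tilde B(0)\le\tilde A(0)$, so that $\tilde A\ge\tilde B$ gives $B\ge A$. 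Throughout, vector inequalities are componentwise.

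\textbf{Step 1: a strictly perturbed comparison flow.} For $\epsilon>0$, let $B^\epsilon$ solve $\dot B^\epsilon=\Psi(B^\epsilon)-\epsilon(1,1)$ with $B^\epsilon(0)=B(0)-\epsilon(1,1)$. Since $\Psi$ is locally Lipschitz on $\R^2$ and $B([0,T])$ is compact, a routine Gr\"onwall/continuous-dependence estimate shows that for all sufficiently small $\epsilon$ the solution $B^\epsilon$ exists on all of $[0,T]$ and $B^\epsilon\to B$ uniformly on $[0,T]$ as $\epsilon\downarrow0$. (No constraint is imposed by $\Omega$, since $\Psi$ is defined on all of $\R^2$.)

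\textbf{Step 2: strict inequality for the perturbed flow.} Fix such an $\epsilon$; I claim $A(t)>B^\epsilon(t)$ for all $t\in[0,T]$. This holds at $t=0$ because $A(0)\ge B(0)>B^\epsilon(0)$. If it failed, the closed set of times at which some coordinate of $A-B^\epsilon$ is $\le 0$ would be nonempty, with minimum $t_1\in(0,T]$; by minimality $A(t_1)\ge B^\epsilon(t_1)$ and $A_i(t_1)=B^\epsilon_i(t_1)$ for some $i$, say $i=1$ (the case $i=2$ is symmetric, using instead that $\Psi_2(\cdot,y)$ is increasing). Since $A_1-B^\epsilon_1\ge 0$ on $[0,t_1]$ and vanishes at $t_1$, we get $\dot A_1(t_1)\le\dot B^\epsilon_1(t_1)$. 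On the other hand, using $\dot A_1\ge\Psi_1(A)$, then $A_1(t_1)=B^\epsilon_1(t_1)$, then the quasi-monotonicity of $\Psi_1$ in its second argument together with $A_2(t_1)\ge B^\epsilon_2(t_1)$,
\[
\dot A_1(t_1)\ge\Psi_1\big(B^\epsilon_1(t_1),A_2(t_1)\big)\ge\Psi_1\big(B^\epsilon_1(t_1),B^\epsilon_2(t_1)\big)=\dot B^\epsilon_1(t_1)+\epsilon>\dot B^\epsilon_1(t_1),
\]
a contradiction. Hence $A(t)>B^\epsilon(t)$ on $[0,T]$.

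\textbf{Step 3: limit, and the main obstacle.} Letting $\epsilon\downarrow0$ in $A(t)>B^\epsilon(t)$ and using $B^\epsilon\to B$ uniformly yields $A(t)\ge B(t)$ for all $t\in[0,T]$. The only real subtlety is that the hypotheses are non-strict: running a first-crossing argument directly on $A-B$ (without perturbing) only produces $\dot A_1(t_1)=\dot B_1(t_1)$ at a touching time, which is not a contradiction. Inserting the defect $-\epsilon(1,1)$ \emph{both} in the equation and in the initial data of $B^\epsilon$ is exactly what upgrades that marginal equality to the strict inequality used above; the price paid is the elementary existence/continuous-dependence check in Step 1.
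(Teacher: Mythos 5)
Your proof is correct. Note that the paper does not actually prove this lemma: it is invoked as the classical comparison theorem of Chaplygin with a citation to Szarski's book on differential inequalities, so there is no internal argument to compare against. What you have written is precisely the standard Kamke--M\"uller perturbation proof of that classical result: the strictly perturbed flow $\dot B^\epsilon=\Psi(B^\epsilon)-\epsilon(1,1)$ with shifted initial data upgrades the marginal equality at a first touching time to a strict inequality, the quasi-monotonicity of $\Psi_1$ in its second argument (resp.\ $\Psi_2$ in its first) is used exactly where it is needed at the touching coordinate, and the passage $\epsilon\downarrow 0$ via uniform convergence of $B^\epsilon$ to $B$ (justified by local Lipschitzness of $\Psi$ on a compact tube around $B([0,T])$ together with a continuation argument) recovers the non-strict conclusion. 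The reduction of the second alternative to the first via $\tilde A=-A$, $\tilde B=-B$, $\tilde\Psi(x)=-\Psi(-x)$ is also checked correctly, since this reflection preserves quasi-monotonicity. In short, your argument is a complete, self-contained substitute for the external reference used in the paper.
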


\begin{proof}[\textbf{\emph{Proof of Theorem~\ref{thm:rectangle-condition}}}]
We check that $X(t)$ satisfies the conditions of Chaplygin's lemma. 
The first condition is satisfied by choice of initial data. 
The second follows by  \condI. 
Finally, by an explicit computation, it is easy to see that the region $V_-$ is the domain in which condition (3)  holds
for both $\Phi_L$ and $\Phi_U$. The result then follows by \prettyref{lem:chaplygin}.
\end{proof}

\section{Proof of full phase diagram}\label{sec:phase-portrait}

In this section, we turn to proving Theorem~\ref{thm:full-phase-portrait}.
One can of course obtain sharper
results for a given choice of initial data, however, we do not pursue
this direction. 

\subsection{Properties of the bounding flows}

We begin this section by first studying the phase diagram
for the two ``bounding flows''; we will then use the results we glean
to obtain corresponding results for the flow itself.

As in previous sections, let $A_L$ and $A_U$ be the lower and upper bounding flows respectively, and recall the definition of $W_+$ and $W_-$ from~\eqref{eq:W-pm} as well as $\bar u_c$ from~\eqref{eq:bar-u_c} and the decomposition of $\mathbb R\times \mathbb R_+$ in~\eqref{eq:A_0-eps}--\eqref{eq:A_i}.

\begin{lemma}
\label{lem:W+lem}We have the following:
\begin{enumerate}
\item If $(x_{0},y_{0})\in W_{+}$ then $A_{L}^{1}(\tau_{W_{+}^{c}}^{L})\geq u_{c}$
and likewise, if $\bar u_c <\infty$, then $A_{U}^{1}(\tau_{W_{+}^{c}}^{U})\geq\bar{u}_{c}$. 
\item Let $B$ be a bounded set. For every $\delta>0$ there exists $T_{0}$
such that for every $(x_{0},y_{0})\in B\cap W_{+}$, we have $\tau_{A_{0,\delta}\cup A_{1}}^{L}\leq T_{0}$
and if $\bar{u}_{c}<\infty$ then also $\tau_{A_{0,\delta}\cup A_{1}}^{U}\leq T_{0}$. 
\end{enumerate}
\end{lemma}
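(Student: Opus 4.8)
### Proof proposal for Lemma~\ref{lem:W+lem}

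The plan is to analyze the planar autonomous systems $A_L$ and $A_U$ directly, exploiting the structure of the vector fields $\Phi_L,\Phi_U$ recorded in Remark~\ref{rem:fixpoints}. The region $W_+$ is exactly $\{v > pu\beta^{-1}\} = \{v > \ell_1(u)\}$, i.e.\ the region above the line $\ell_1$, where $\cF_1 = -pu+\beta v > 0$, so $u$ is strictly increasing along either flow while the flow stays in $W_+$. Since the time parametrization is not what matters for item (1), I would first reparametrize the trajectory by $u$: while in $W_+$, the trajectory of $A_L$ is the graph of the function $\gamma_L(\cdot;(x_0,y_0))$ solving $\gamma_L' = \cF_2^L(u,\gamma_L)/\cF_1(u,\gamma_L)$, and $A_L$ exits $W_+$ precisely when $\gamma_L(u) = \ell_1(u)$. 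The key geometric input (from Remark~\ref{rem:A-0-definition}) is that $\cF_2^L(u,\ell_1(u)) < 0$ for all $u > u_c$; hence on the line $\ell_1$, to the right of $u_c$, the lower flow points strictly \emph{downward} through $\ell_1$, so it cannot be that $A_L$ first touches $\ell_1$ at some $u < u_c$ coming from above — at such a point $\cF_2^L(u,\ell_1(u))>0$, meaning the flow is being pushed up, away from $\ell_1$, contradicting that this is an exit point. A clean way to say this: let $u^\star = A_L^1(\tau^L_{W_+^c})$; at the exit point $\gamma_L$ crosses $\ell_1$ from above to below, which forces $\gamma_L'(u^\star) \le \ell_1'(u^\star)$ and $\cF_2^L(u^\star,\ell_1(u^\star)) \le 0$, and the latter inequality holds only for $u^\star \ge u_c$ by the explicit sign analysis of $v\mapsto\cF_2^L(u,v)$ together with $f_L(u_c)=\ell_1(u_c)$. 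The argument for $A_U$ when $\bar u_c<\infty$ is identical with $\cF_2^U$ and $\bar u_c$ in place of $\cF_2^L$ and $u_c$, using $\cF_2^U(u,\ell_1(u))<0$ for $u>\bar u_c$.

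For item (2), I would argue that the flow cannot linger in $W_+$ for long, uniformly over bounded initial data. On $W_+$, $\cF_1 = -pu+\beta v$; the issue is that $\cF_1$ could be small if $(x_0,y_0)$ is close to the line $\ell_1$. So I would split into two regimes. If the trajectory is at $\ell_\infty$-distance $\ge \delta/2$ above $\ell_1$, then $\cF_1 \ge c(\delta) > 0$ on the bounded set $B$ (using that $B$ is bounded so $u,v$ are controlled and $\cF_1$ is continuous), so $u$ increases at a uniform rate and in time $O_\delta(1)$ the trajectory reaches $u \ge \bar u_c$ (if $\bar u_c<\infty$) or leaves $W_+$; once $u\ge u_c$ and we are still in $W_+$ we are, by definition of $A_0$ via $\gamma_U(\cdot;z_c)$ and $\gamma_L(\cdot;\cdot)$ together with Remark~\ref{rem:A-0-definition}, either already in $A_{0,\delta}$ or will cross into $A_1 \cup A_{0,\delta}$. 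If instead the trajectory is within distance $\delta/2$ of $\ell_1$ but still in $W_+$, then since $\cF_2^L(u,\ell_1(u))<0$ for $u>u_c$ (resp.\ $\cF_2^U>0$ for $u<\bar u_c$), the vector field near $\ell_1$ drives the flow either down across $\ell_1$ into $A_1\cup A_3$ — but the relevant exit into $A_{0,\delta}\cup A_1$ — or, for the upper flow, keeps it pinned near $\ell_1$ which is inside $A_{0,\delta}$ once $u\in[u_c,\bar u_c]$ is within $\delta$. In all cases one extracts a uniform-in-$B$ bound $T_0(\delta)$ on the time to enter $A_{0,\delta}\cup A_1$; the uniformity comes from compactness of $\overline{B}\cap W_+$ and continuity of the locally Lipschitz vector fields, combined with the fact that solutions depend continuously on initial data.

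The main obstacle I anticipate is the second regime of item (2): making the "uniform escape time near the line $\ell_1$" argument rigorous without a lower bound on $\cF_1$. The trajectory could in principle ride along just above $\ell_1$ with $u$ increasing arbitrarily slowly. The resolution should be that one does not need $\cF_1$ bounded below everywhere — one only needs that the trajectory cannot simultaneously keep $\cF_1$ small (i.e.\ stay near $\ell_1$) and fail to enter $A_{0,\delta}\cup A_1$, because staying near $\ell_1$ with $u \in$ (a neighborhood of) $[u_c,\bar u_c]$ \emph{is} being in $A_{0,\delta}\cup A_1$ by the geometry of those sets (Remark~\ref{rem:A-0-definition} gives that $A_0$ contains the whole segment $\{v=\ell_1(u): u\in[u_c,\bar u_c]\}$). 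So the dichotomy is really: either $\cF_1$ is uniformly bounded below (distance $\ge\delta/2$ from $\ell_1$), giving a uniform escape time; or the trajectory is within $\delta/2$ of $\ell_1$, in which case I need to show it is within $\delta$ of the relevant segment of $\ell_1$, i.e.\ that its $u$-coordinate has already passed $u_c$ — which follows because to reach a point near $\ell_1$ with $u<u_c$ from inside $W_+$ it would have had to cross $\ell_1$ first (since $\cF_2^L>0$ there pushes away from $\ell_1$ from below, it can only approach from above, but $f_L < \ell_1$ for $u<u_c$ so... ) — this last point needs a careful check of the relative position of $f_L$ and $\ell_1$ for $u<u_c$, which I would do via the explicit formulas. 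Modulo that bookkeeping, the lemma follows from elementary planar ODE comparison and compactness.
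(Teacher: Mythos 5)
Your item (1) is correct and is essentially the paper's own argument: at the first exit from $W_+$ one has $\frac{d}{dt}\bigl(v-\ell_1(u)\bigr)\le 0$, and since $\cF_1=0$ on $\ell_1$ this forces $\cF_2^L\bigl(u^\star,\ell_1(u^\star)\bigr)\le 0$, hence $u^\star\ge u_c$ (and the analogue for $\cF_2^U$, $\bar u_c$). The issues are in item (2), where your two-regime dichotomy (far from $\ell_1$ versus within $\delta/2$ of $\ell_1$) leaves two genuine gaps. First, your proposed resolution of the near-$\ell_1$ regime — ``its $u$-coordinate has already passed $u_c$'' — cannot work, because the lemma is uniform over \emph{initial} data $(x_0,y_0)\in B\cap W_+$, which may sit arbitrarily close to $\ell_1$ with $x_0<u_c$; no crossing argument applies to the starting point. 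The correct mechanism there (the paper's region $E_1$) is that for $u\le u_c-\delta$ one has $\ell_1(u)<f_L(u)$ with a uniform gap, hence $\cF_2^L\ge c>0$ just above the line, so the flow rises away from $\ell_1$ at a uniform rate and enters the ``far'' regime in bounded time. This is exactly the comparison you deferred, and you in fact wrote it backwards ($f_L<\ell_1$ for $u<u_c$); the correct inequality is $f_L>\ell_1$ there, as follows from $\cF_2^L(u,\ell_1(u))>0$ for $u<u_c$ — the very fact you used in item (1).

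Second, your far-regime argument only yields uniform rightward motion, and rightward motion never enters $A_{0,\delta}\cup A_1$ by itself; the assertion that once $u\ge u_c$ the trajectory ``will cross into $A_1\cup A_{0,\delta}$'' is exactly what needs a quantitative proof. The missing ingredient (the paper's region $E_3$) is the downward drift: every point of $W_+$ with $u\ge u_c+\delta$ satisfies $v>\ell_1(u)>f_L(u)$, hence $\cF_2^L\le -c<0$ uniformly on the bounded region, so the flow descends and must enter $A_{0,\delta}\cup A_1$ (either through the top of $A_0$ or by crossing $\ell_1$ into the strip between $f_L$ and $\ell_1$) within a bounded time; without this, no uniform $T_0$ is obtained. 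Relatedly, your decomposition by distance to $\ell_1$ does not have one-directional transitions, so you would additionally have to rule out (or control the total time over) repeated alternation between the two regimes; the paper sidesteps this by the three-region decomposition $E_1\to E_2\to E_3$ in which each region is left once and for all. Finally, the parenthetical claims in your near-$\ell_1$ discussion misstate the dynamics (e.g., for the upper flow, $\cF_2^U>0$ near $\ell_1$ for $u<\bar u_c$ pushes the trajectory \emph{away} from the line, not ``pinned'' to it), so that part would need to be rewritten along the lines above.
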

\begin{proof}
The fact that $A_{L}^{1}(\tau_{W_{+}^{c}}^{L})\geq u_{c}$ is shown
as follows. We first claim that $\tau_{\{u<u_{c}\}\cap W_{-}}^{L}\geq\tau_{\{u=u_{c}\}}^{L}$.
This follows from the fact that $\mathcal{F}_{2}^{L}(u,\ell_{1}(u))>0$ and $\cF_1(u,\ell_1(u))=0$
for all $u<u_{c}$, implying that $A_{L}^{1}(t)$ cannot hit $\{(u,\ell_{1}(u)):u<u_{c}\}$
for any $t\in(0,\tau_{\{u\geq u_{c}\}}^{L})$. 

Then since $\mathcal{F}_{1}(u,v)\geq0$ when $u\geq u_{c}$ and $v\geq f_{L}(u)$,
 $A_{L}^{1}(t)\geq u_{c}$ for all $t\in[\tau_{\{u\geq u_{c}\}\cap W_{+}}^{L},\tau_{W_{+}^{c}}^{L}]$,
implying the desired. Whenever $\bar{u}_{c}<\infty$ the analogous claim for $A_{U}^{1}(\tau_{W_{+}^{c}}^{U})$ follows similarly. 

It remains to prove the upper bounds on $\tau_{A_{0,\delta}}^{L}$
and $\tau_{A_{0,\delta}}^{U}$. We prove the bound for $A_{L}(t)$
as  the bound for $A_{U}(t)$ (when $\bar{u}_{c}<\infty$) follows
analogously. Fix $\delta>0$. By continuity, there exists an $\eta>0$ such that
$\ell_{1}(u)+\eta<f_{L}(u)$ for every $u\leq u_{c}-\delta$. Split
up $A_{4}\cup A_{2}$ into the following sets: 
\begin{align*}
E_{1}= & \{(u,v)\in(W_{+}\cap B)\setminus A_{0,\delta}:u\leq u_{c}-\delta,v\leq\ell_{1}(u)+\eta\}\,,\\
E_{2}= & \{(u,v)\in(W_{+}\cap B)\setminus A_{0,\delta}:u\leq u_{c}+\delta\}\setminus E_{1}\,,\\
E_{3}= & (W_{+}\cap B)\setminus(E_{1}\cup E_{2})\,.
\end{align*}

Observe that there exists a $c>0$ such that $\inf_{E_{1}}\mathcal{\mathcal{F}}_{2}^{L}\geq c$.
As a result, since $E_{1}$ is bounded from above by $E_{2}\cup A_{0,\delta}$, there
exists $T_{1}$ such that for every $(x_{0},y_{0})\in E_{1}$, $\tau_{E_{2}\cup A_{0,\delta}}^{L}\leq T_{1}$.
Observe that furthermore, $\inf_{E_{2}}\cF_{1}\geq c$, for some
other $c>0$. The continuity of $A_{L}(t)$ then implies that,
for every $(x_{0},y_{0})\in E_{2}$, $A_{L}$ cannot enter $E_{1}$
from $E_{2}$, which is to say that $\tau_{E_{2}}^{L}>\tau_{W_{+}^{c}}^{L}$
for $(x_{0},y_{0})\in E_{2}$. Moreover, since $E_{2}$ is a bounded
set and bounded to the right by $A_{0,\delta}\cup E_{3}$, there
exists a $T_{2}$ such that for every $(x_{0},y_{0})\in E_{2}$, $\tau_{A_{0,\delta}\cup E_{3}}^{L}\leq T_{2}$,
and, by continuity of $A_{L}(t)$, the flow cannot enter $E_{2}$
from $A_{0,\delta}\cup E_{3}$. Since $\sup_{E_{3}}\mathcal{F}_{2}^{L}\leq-c$,
and $E_{3}$ is bounded and bounded below by $A_{0,\delta}\cup A_{1}$,
there exists $T_{3}$ such that for every $(x_{0},y_{0})\in E_{3}$,
$A_{L}$ satisfies $\tau_{A_{0,\delta}\cup A_{1}}^{L}\leq T_{3}$.
Setting $T_{0}=T_{1}+T_{2}+T_{3}$ yields the desired estimate. 
\end{proof}

\begin{lemma}
\label{lem:A1-lem} If $\bar u_c<\infty$,  $A_{1}$ is nonempty, and we have the following.
\begin{enumerate}
\item If $(x_{0},y_{0})\in A_{1}$, then $A_{L}(\tau_{A_{1}^{c}}^{L})\in A_{0}$
and $A_{U}(\tau_{A_{1}^{c}}^{U})\in A_{0}$. 
\item Let $B$ be a bounded set. For every $\delta>0$, there exists $T_{0}$
such that for every $(x_{0},y_{0})\in B\cap A_{1}$, we have that
$\tau_{A_{0,\delta}}^{L}\leq T_{0}$ and that $\tau_{A_{0,\delta}}^{U}\leq T_{0}$. 
\end{enumerate}
\end{lemma}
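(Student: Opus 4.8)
The plan is to prove the three assertions in turn, using throughout that $z_c,\bar z_c$ are the unique attracting fixpoints of $A_L,A_U$ (Remark~\ref{rem:fixpoints}) and that $A_1$ lies inside their basins of attraction.

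For nonemptiness I would first check directly from the formulas that $f_L$ and $\ell_1$ meet only at $u=u_c$: the equation $p(p-1)+p^2u^2=\ell_1(u)(p-1+p\beta u+\beta\Lambda_p)$ reduces, once the $p^2u^2$ terms cancel, to $u=u_c$. Since $f_L(0)>0=\ell_1(0)$ while $f_L(u)-\ell_1(u)$ tends to a strictly negative constant as $u\to\infty$, this forces $f_L(u)<\ell_1(u)$ for all $u>u_c$. On the other hand $A_0$ is sandwiched between the two integral curves $\gamma_L(\,\cdot\,;A_U(\tau^U_{W_+^c};z_c))$ and $\gamma_U(\,\cdot\,;z_c)$ joining $z_c$ to $\bar z_c$, hence is bounded, so for all large $u$ the (nonempty) segment $\{(u,v):f_L(u)\le v\le\ell_1(u)\}$ is disjoint from $A_0$ and therefore lies in $A_1$; thus $A_1\neq\emptyset$.

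For part (1) I would set $S=\{(u,v):u\ge u_c,\ f_L(u)\le v\le\ell_1(u)\}\supseteq A_1$. The key is a boundary computation showing $\Phi_L,\Phi_U$ point into $S$: on $\{v=f_L(u)\}$ with $u>u_c$ one has $\tfrac{d}{dt}(v-f_L(u))=-f_L'(u)\,\cF_1(u,f_L(u))=f_L'(u)(pu-\beta f_L(u))>0$, using $f_L'>0$ on $[u_c,\infty)$ (Remark~\ref{rem:fixpoints}) and $f_L<\ell_1$ there; on $\{v=\ell_1(u)\}$ with $u>u_c$ one has $\cF_1=0$ and $\cF_2^L(u,\ell_1(u))<0$ (Remark~\ref{rem:A-0-definition}), so $\tfrac{d}{dt}(\ell_1(u)-v)=-\cF_2^L(u,\ell_1(u))>0$; and the same holds for $\Phi_U$ on the $f_L$-boundary (since $\cF_2^U\ge\cF_2^L$ there) and on the $\ell_1$-boundary for $u>\bar u_c$. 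Hence $S$ is forward invariant for $A_L$; since $\cF_1\le0$ on $S$, an orbit started in $A_1$ stays in the compact set $S\cap\{u\le x_0\}$, whose only fixpoint is $z_c$ (the unique intersection of $f_L$ and $\ell_1$), so by Poincar\'e--Bendixson — no periodic orbits since $\dot u\le0$ — it converges to $z_c$; and it can leave $A_1$ only into $A_0$, because the pieces of $\partial A_1$ lying on $f_L$ or $\ell_1$ repel the flow inward while the rest of $\partial A_1$ lies in $\partial A_0$. By continuity the exit point, when $\tau^L_{A_1^c}$ is finite, lies in $\overline{A_0}=A_0$, and otherwise the assertion holds vacuously. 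For $A_U$, while the orbit is in $S$ one still has $\cF_1\le0$, so either it crosses $\partial A_0$ (whence the exit point lies in $A_0$, as $\gamma_L(\,\cdot\,;\cdot)$ is the lower boundary of the closed set $A_0$ and is freely crossable by an $A_U$-orbit) or it remains in $S$ and converges to its unique fixpoint $\bar z_c\in A_0$; either way $A_U(\tau^U_{A_1^c})\in A_0$.

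For part (2), fix a bounded set $B$ and $\delta>0$, and let $R:=\sup\{u:(u,v)\in B\}$. Since $\cF_1\le0$ on $S$, for $(x_0,y_0)\in B\cap A_1$ the $A_L$-orbit remains in the compact set $K=S\cap\{u\le R\}$, and the $A_U$-orbit remains in a similarly fixed compact set (it converges to $\bar z_c$ or enters the compact set $A_0$). Each such orbit enters the open set $B_{\ell_\infty}(z_c,\delta/2)\subseteq A_{0,\delta}$ (resp.\ $B_{\ell_\infty}(\bar z_c,\delta/2)\subseteq A_{0,\delta}$, or $A_0$) in finite time, so by continuity of the flow and compactness — covering $K$ by finitely many neighbourhoods each carried into $A_{0,\delta}$ by a common time, exactly as in the proof of Lemma~\ref{lem:W+lem} — there is a single $T_0$ with $\tau^L_{A_{0,\delta}}\le T_0$ and $\tau^U_{A_{0,\delta}}\le T_0$ uniformly over $B\cap A_1$. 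The one genuinely delicate point — which is where I expect the real work to be — is the behaviour of the $A_U$-orbit near $u=\bar u_c$, where $\dot u$ changes sign so the orbit may overshoot $u=\bar u_c$, drift left under $\ell_1$, and turn back; ruling out its escape into $A_2$ or $A_3$ before it reaches $A_0$ is precisely what the inward sign computations on $f_L$ and $\ell_1$ give, once combined with the fact that the integral curves $\gamma_L(\,\cdot\,;\cdot)$ and $\gamma_U(\,\cdot\,;z_c)$ bounding $A_0$ cannot be crossed by orbits of the systems generating them.
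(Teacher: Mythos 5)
Your argument is sound in substance and correctly uses the key structural facts ($f_L=\ell_1$ only at $u_c$, $f_L'>0$ past $u_c$, the signs of $\cF_2^{L/U}$ on $\ell_1$ and $f_L$, and non-crossing of integral curves), but it is organized differently from the paper's proof. For item (1) the paper argues by excluding exits into $A_2$ and $A_3$ separately: the sign of $\cF_2^U$ on $\ell_1$ past $A_{0,\delta}$ rules out $A_2$, $\cF_2^U(u,f_L(u))>0$ rules out $A_3$ for $A_U$, and for $A_L$ a tangency argument on $f_L$ (slope of the trajectory $0$ versus $f_L'>0$) rules out $A_3$; your forward-invariance computation $\frac{d}{dt}(v-f_L(u))=-f_L'\cF_1>0$, $\frac{d}{dt}(\ell_1(u)-v)=-\cF_2^{L}>0$ packages the same content, and in fact treats $A_L$ and $A_U$ on the $f_L$-boundary uniformly. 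For item (2) the routes genuinely diverge: the paper gives a direct quantitative bound — $|\Phi_L|\geq c$ on $A_1\setminus A_{0,\delta}$ with both velocity components of fixed sign and $A_1$ bounded to the left, with a split of $A_1$ along $f_U$ for the upper flow — whereas you use a soft argument (monotonicity of $u$ in the band, hence convergence of every $A_L$-orbit to $z_c$ and of trapped $A_U$-orbits to $\bar z_c$, then compactness and continuous dependence to extract a uniform $T_0$). Both work; the paper's version is more elementary and yields the constant more explicitly, yours generalizes with less bookkeeping and has the merit of also proving nonemptiness of $A_1$, which the paper leaves implicit.

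Two points need patching. First, your covering argument requires, for each initial point, a time at which the orbit sits in an \emph{open} subset of $A_{0,\delta}$; for $A_U$-orbits you offer the target ``$A_0$'', which is closed, and $A_{0,\delta}$ is only enlarged near $z_c$ and $\bar z_c$, so entry through $\partial A_0$ alone does not give the neighborhoods you need. The fix is short but should be stated: an $A_U$-orbit can enter $A_0$ from $A_1$ only across the lower boundary curve $\gamma_L$ (itself an $A_L$-trajectory), and along that curve $\cF_2^U-\cF_2^L=4\beta\Lambda_p v>0$, so $\frac{d}{dt}\bigl(v-\gamma_L(u)\bigr)>0$ at the crossing and the orbit immediately lies strictly between $\gamma_L$ and $\ell_1$, i.e.\ in the interior of $A_0$ (the same computation handles entry at the right tip); alternatively one can simply fall back on the paper's velocity-bound argument for the uniform time. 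Second, your closing remark locates the delicacy at ``$u=\bar u_c$, where $\dot u$ changes sign'': this is off — in the band $S$ one has $\cF_1\leq 0$ throughout, and what changes sign at $\bar u_c$ is $\cF_2^U$ restricted to $\ell_1$; an $A_U$-orbit can only recross $\ell_1$ upward at $u<\bar u_c$, where that segment of $\ell_1$ already lies in $A_0$, so no ``turning back'' scenario outside $A_0$ arises. Finally, note that in your second alternative for $A_U$ (orbit trapped in $S$ forever, converging to $\bar z_c$) the hitting time $\tau^U_{A_1^c}$ is infinite and the conclusion of item (1) holds only by the paper's vacuous-hitting-time convention; it is worth saying so rather than asserting the exit point exists.
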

\begin{proof}
Fix any $\delta>0$. First of all, since $f_{U}(u)$ intersects $\ell_{1}(u)$
at $\bar{z}_{c}\in A_{0}$, there exists a $c>0$ such that for
every $(u,\ell_{1}(u))$ with $u\geq\sup\{x:(x,y)\in A_{0,\delta}\}$
we have that $\mathcal{F}_{2}^{U}(u,\ell_{1}(u))\leq-c$. As
a consequence, we have both $A_{L}(\tau_{A_{1}^{c}}^{L})\notin A_{2}$
and $A_{U}(\tau_{A_{1}^{c}}^{U})\notin A_{2}$. Moreover, since $f_{L}(u)<f_{U}(u)$
on the domain of $f_{U}(u)$, there is some $c'>0$ such that
$\mathcal{F}_{2}^{U}(u,f_{L}(u))\geq c'$ for every $u\geq u_{c}+\delta$.
Thus $A_{U}(\tau_{A_{1}^{c}}^{U})\notin A_{3}$ so that $A_{U}(\tau_{A_{1}^{c}}^{U})\in A_{0}$. 

To deduce item 1 above, it remains to show that $A_{L}(\tau_{A_{1}^{c}}^{L})\notin A_{3}$.
By \prettyref{lem:graph-comparison} and the fact that $A_{L}(\tau_{A_{1}^{c}}^{L})\notin A_{2}$,
for every $(x_{0},y_{0})\in A_{1}$, the set $(A_{L}(t))_{t\leq\tau_{A_{1}^{c}}}$
is the graph of some $\gamma_{L}(u;(x_0,y_0))$ with 
\[
\gamma'_{L}(u)=-\frac{\mathcal{F}_{2}^{L}(u,\gamma_{L}(u))}{\mathcal{F}_{1}(u,\gamma_{L}(u))}\,.
\]
 This implies that if $A_{L}(t_{0})=(u_{0},f_{L}(u_{0}))$ for some
$u_{0}>u_{c}$, then $\gamma'_{L}(u_{0})=0$ while $f_{L}'(u_{0})>0$.
As a result, $\gamma_{L}(u)\in A_{1}$ for all $u<u_{0}$ sufficiently
close to $u_{0}$. Consequently, $A_{L}(t)\in A_{1}$ for all $t>t_{0}$
sufficiently close to $t_{0}$. By continuity of $A_{L}(t)$, it then
follows that $A_{L}(\tau_{A_{1}^{c}}^{L})\notin A_{3}$ as desired. 

To see item 2, notice that $\inf_{A_{1}\setminus A_{0,\delta}}\abs{(\mathcal{F}_{1},\mathcal{F}_{2}^{L})}\geq c$
for some $c>0$ and, moreover, $\mathcal{F}_{1}(u,v)\leq0$
and $\mathcal{F}_{2}(u,v)\leq0$ for every $(u,v)\in A_{1}$. Since
$A_{1}$ is bounded to the left, there exists a $T_{0}$ such that
for every $(x_{0},y_{0})\in A_{1}$, we have that $\tau_{(A_{1}\setminus A_{0,\delta})^{c}}^{L}\leq T_{0}$.
By item 1, this implies that $\tau_{A_{0,\delta}}^{L}\leq T_{0}$.
Item~2 for the upper bounding flow $A_{U}(t)$ follows similarly,
after splitting up $A_{1}$ into those points above $\{u,f_{U}(u)\}$
and those below $\{u,f_{U}(u)\}$ and noting that $A_{U}(t)$ cannot
cross $f_{U}(u)$ from above. 
\end{proof}

\subsection{Proof of Theorem~\ref{thm:full-phase-portrait}}\label{subsec:Phase-portrait-for-X}
In the following, we will be interested in phase portraits for systems that satisfy \condI~
and \condB~ as defined in Definitions~\ref{def:condi}--\ref{def:condb}.
Thus, for the rest of this section, we assume that $U$ satisfies both \condI~
and \condB~with respect to some $T$ and $W$. In particular, we will take this $T$ to be sufficiently large such that all finite hitting times we consider are less than $T$. 
As before, let $A_{i}$ denote the
sets from~\eqref{eq:A_0-eps}--\eqref{eq:A_i} restricted to the compact set $W$ for which
$U$ satisfies \condB~. Recall that for any set $A\subset W$, we let $\tau_{A}$ denote
the hitting time for $U$ started from $U(0)=(x_0,y_0)$. 

\begin{proof}[\textbf{\emph{Proof of item~\eqref{item:A4} of Theorem~\ref{thm:full-phase-portrait}}}]
We wish to show that for every $(x_0 , y_0)\in A_{4}$, we have $U({\tau_{A_{4}^c}})\in A_2$ and $\tau_{A_{4}^c}\leq T_0$.
The fact that $U(\tau_{A_{4}^c}) \in A_2$ is trivial by definition of $A_{4}$ and $A_2$ (as $\partial A_{4} \subset A_2$). 
By item (2) of Corollary~\ref{cor:graph-cor} and Lemma~\ref{lem:W+lem}, for every $x_0 <0$ and every  $u\in [x_0, 0]$, for $T$ large enough, $u$ is in the domain of $\gamma_L(\cdot ;(x_0 ,y_0))$ and 
\begin{align*}
\ell_1(u)<\gamma_L(u;(x_0,y_0)) \leq \gamma(u;(x_0 ,y_0))\,.
\end{align*}
By Lemma~\ref{lem:W+lem}, there exists $T_0$ such that for every $(x_0,y_0)\in A_{4} \subset W_+$, we have $\tau_{A_2}^L \leq T_0$. This implies by Corollary~\ref{cor:graph-cor} that $(U(t))_{t\leq \tau_{A_2}}$ is the graph of a function $\gamma(u;(x_0,y_0))$ whose domain contains $[x_0, 0]$. 
For every $t\leq \tau_{A_{4}^c}$, we have $|\dot U(t)|\leq c>0$ for $c(p,\beta;(x_0,y_0))$ depending continuously on $(x_0,y_0)$. Together these imply that for every $(x_0,y_0)\in A_{4}$, there exists $T_{x_0,y_0} (p,\beta)$ depending continuously on $(x_0,y_0)$ such that $\tau_{A_{4}^c}\leq T_{x_0,y_0}$. As $T_{x_0,y_0}$ is continuous and $W$ is compact, the supremum over all $(x_0,y_0) \in A_{4}$ of $T_{x_0,y_0}$ is some finite $T_0$ as desired. 
\end{proof}


\begin{proof}[\textbf{\emph{Proof of item~\eqref{item:A3} of Theorem~\ref{thm:full-phase-portrait}}}]
Tautologically, for every $(x_0,y_0)\in A_3$, we have $U(\tau_{A_3^c})\in A_0\cup A_1 \cup A_2$; it remains to show that $\tau_{A_{0,\delta} \cup A_1 \cup A_2}\leq T_0$.  Fix any $\delta$ small and observe that
for every $(u,v)\in A_{3}\backslash A_{0,\delta}$,
\[
\cF_{1}(u,v)\leq0\qquad\mbox{and}\qquad\cF_{2}^{L}(u,v)\geq0\,.
\]
Moreover, there is a $c(\delta)>0$ such that for every $(u,v)\in A_{3}\setminus A_{0,\delta}$,
we have $\abs{\Phi_{L}}>c$ and, for every  $(x_0,y_0)\in A_3 \setminus A_{0,\delta}$, for every $t\leq \tau_{(A_3\setminus A_{0,\delta})^c}$, we have  $|{\dot{U}}(t)|>c$. As a result, by compactness
of $A_{3}\backslash A_{0,\delta}$ and compactness of $W$, there exists $T_{0}$ such that for every $(x_{0},y_{0})\in A_{3}\setminus A_{0,\delta}$,
\[
\tau_{\left(A_{3}\backslash A_{0,\delta}\right)^{c}}=\tau_{A_{0,\delta}\cup A_1 \cup A_{2}}\leq c^{-1}\sup_{\substack{w\in A_{3}}
,w'\in\partial\left(A_{3}\backslash A_{0,\delta}\right)}d(w,w')\leq T_{0}\,. \qedhere
\]
\end{proof}

\begin{proof}[\textbf{\emph{Proof of item~\eqref{item:A2} of Theorem~\ref{thm:full-phase-portrait}}}]
We first show that $U(\tau_{A_2^c})\in A_0\cup A_1$. 
The fact that $U(\tau_{A_{2}^{c}})\notin A_{4}$ follows because $\mathcal{F}_{1}(u,v)\geq0$
for all $(u,v)\in W_{+}$ . By item (2) of \prettyref{cor:graph-cor}
combined with \prettyref{lem:W+lem}, we see that for every $u\in [x_0,u_{c})$,
 if $T$ is large enough, $u$ is in the domain of $\gamma_{L}(\cdot;(x_{0},y_{0}))$
and
\begin{align*}
\ell_{1}(u)<\gamma_{L}(u;(x_{0},y_{0}))\leq & \gamma(u;(x_{0},y_{0}))\,;
\end{align*}
this implies that $U(\tau_{A_{2}^{c}})\notin A_{3}$ implying $U(\tau_{A_{2}^{c}})\in A_{0}\cup A_{1}$. 

It remains to show that for every $\delta>0$, there is a $T_0$ such that $\tau_{A_{0,\delta}\cup A_1}\leq T_0$. 
Recall that by \prettyref{lem:W+lem}, for every $\delta>0$, 
there exists $T_{0}(\delta)<\infty$ such that for every $(x_{0},y_{0})$,
$\tau_{A_{0,\delta}\cup A_{1}}^{L}\leq T_{0}$. This implies by items (1) and (2) of \prettyref{cor:graph-cor}, $(U(t))_{t\leq\tau_{A_{0,\delta}\cup A_{1}}}$
is the graph of a function $\gamma(u;(x_{0},y_{0}))$ with domain containing
$[x_{0},U_1(\tau_{A_{0,\delta}\cup A_{1}})]$. 

Now notice that for every $\delta>0$ and every  $t\leq\tau_{A_{0,\delta}\cup A_{1}}$,
we have $|{\dot{U}(t)}|\geq c>0$ for some $c(p,\beta;(x_{0},y_{0}))$
that depends continuously on $(x_{0},y_{0})$. As the boundary of
$A_{2}$ relative to $\{u\geq x_0, v\geq \gamma_L(u)\}$ is $A_{0,\delta}\cup A_{1}$,
we have that for every $(x_{0},y_{0})\in  A_{2}$ there exists
$T_{x_{0},y_{0}}(p,\beta)$ depending continuously on $(x_{0},y_{0})$
such that $\tau_{A_{0,\delta}\cup A_{1}}\leq T_{x_{0},y_{0}}$.
As $T_{x_{0},y_{0}}$ is continuous and $W$ is compact, we may bound
$T_{x_{0},y_{0}}$ uniformly over all such $(x_{0},y_{0})$ by some
finite $T_{0}$, as desired.
\end{proof}

\begin{proof}[\textbf{\emph{Proof of item~\eqref{item:A1} of Theorem~\ref{thm:full-phase-portrait}}}]
By \prettyref{lem:A1-lem},  $A_{L}(\tau_{A_{1}^{c}}^{L}),A_{U}(\tau_{A_{1}^{c}}^{U})\in A_{0}$
and, for every $\delta>0$, there is a $T_{1}$ such that
$\tau_{A_{0,\delta}}^{L}\leq T_{1}$ and $\tau_{A_{0,\delta}}^{U}\leq T_{1}$.
Thus by \prettyref{cor:graph-cor}, $U(\tau_{A_{1}^{c}})\in A_{0}$. 

We now show that for every $\delta>0$ there is a $T_0$ such that $\tau_{A_{0,\delta}}\leq T_0$. We begin with the following velocity
estimates. By \eqref{eq:Diff-ineq}, there exists $c>0$ such
that $|{\dot{U}}(t)|\geq c$ for every $t\leq\tau_{(A_{1}\setminus A_{0,\delta})^{c}}$.
We now split the region $A_{1}\backslash A_{0,\delta}$ into two
regions as follows. For starting points $(x_{0},y_{0})$ satisfying
$x_{0}>\bar{u}_{c}+\delta$ and $y_{0}=\ell_{1}(x_{0})-\eta$, as
$\eta\to0,$ we have that $\gamma'_{U}(x_{0};(x_{0},y_{0}))\to\infty$.
Since $\gamma_{U}^{'}(x_{0};(x_{0},y_{0}))$ depends continuously
on $(x_{0},y_{0})$, there exists an $\eta(\delta)>0$ small such
that for every $(u,v)$ with $u\geq\bar{u}_{c}+\delta$ and
$v=\ell_{1}(u)-\eta$, we have $\gamma'_{U}(u;(u,v))>p/\beta$. With
respect to this $\eta$, define the sets
\begin{align*}
E_{+} & =\left\{ (u,v)\in A_{1}\setminus A_{0,\delta}:y\geq\ell_{1}(u)-\eta\right\}\,, \qquad \mbox{and}\qquad E_{-} =(A_{1}\setminus A_{0,\delta})\setminus E_{+}\,.
\end{align*}

For every  $(u,v)\in E_{+}$, we have that $\cF_{1}(u,v)\leq0$ and $\cF_{2}^{U}(u,v)\leq0$,
so that by \eqref{eq:Diff-ineq},
\[
\tau_{E_{-}\cup A_{0,\delta}}=\tau_{E_{+}^{c}}\leq2c^{-1}\sup_{\substack{w\in E_{+},w'\in\partial A_{1}}
}d_{\ell_{1}}(w,w')=T_{2}\,,
\]
for some finite $T_{2}(\delta)$, uniformly over $(x_{0},y_{0})\in E_{+}$.
On the other hand, since integral curves of locally smooth flows cannot
cross and since the flow $A_{U}$ is outward along the boundary of
$E_{+}$ relative to $A_{1}\backslash A_{0,\delta}$ (as $\gamma_{U}'(u;(u,v))>p/\beta$
and $\mathcal{F}_{1}\leq0$), it follows that $\tau_{E_{-}^{c}}^{U}=\tau_{A_{0,\delta}}^{U}$
for every $(x_{0},y_{0})\in E_{-}$. As a result, by \prettyref{cor:graph-cor}
and we also see that $\tau_{E_{-}^{c}}=\tau_{A_{0,\delta}}$. Moreover,
there is a $c>0$ such that $\inf_{E_{-}}\mathcal{F}_{1}\leq-c$,
so by compactness of $W$ and the fact that $E_{-}$ is bounded to
the left by $A_{0,\delta}$, there is a $T_{3}(\delta)$ such
that for every $(x_{0},y_{0})\in E_{-}$, we have $\tau_{A_{0,\delta}}=\tau_{E_{-}^{c}}\leq T_{3}$.
Thus for every $(x_{0},y_{0})\in A_{1}$ we have that $\tau_{A_{0,\delta}}\leq T_{2}+T_{3}$
as desired. 
\end{proof}

\begin{proof}[\textbf{\emph{Proof of item~\eqref{item:absorbing} of Theorem~\ref{thm:full-phase-portrait}}}]
It remains, for each choice of $\epsilon>0$, to construct an absorbing set $\mathcal A_\epsilon$ containing $A_{0,\delta}$ for some $\delta(\epsilon)>0$. Notice that while the above indicates that the hitting time to $A_{0,\delta}$ is finite, $\tau_{A_{0,\delta}^{c}}$ started from $(x_{0},y_{0})\in A_{0,\delta}$
is not necessarily $\infty$. 
Let $z(\epsilon)=(u_{c}-\epsilon,\ell_{1}(u_{c}-\epsilon))$
and $\bar{z}(\epsilon)=(\bar{u}_{c}+\epsilon,\ell_{1}(\bar{u}_{c}+\epsilon))$
so that $z_{c}=z(0)$ and $\bar{z}_{c}=\bar{z}(0)$:
\begin{align*}
A_{\epsilon} & =\begin{cases}
\left\{ (u,v)\in W:v\in[\ell_{1}(u)\vee\gamma_{L}\left(u;\bar{z}(\epsilon)\vee A_{U}(\tau_{W_{+}^{c}}^{U};z(\epsilon))\right),\gamma_{U}(u;z(\epsilon))]\right\}  & \mbox{if }\bar{u}_{c}<\infty\\
\left\{ (u,v)\in W:v\in[f_{L}(u),\gamma_{U}(u;z(\epsilon))]\right\}  & \mbox{otherwise}
\end{cases}\,.
\end{align*}
where again, the $u$ values are implicitly constrained by the domain
of $\gamma_{U}(u;z(\epsilon))$. Let 
\[
B_{\epsilon}(z_{c})=([u_{c}-\epsilon,u_{c}+\epsilon]\times[\ell_{1}(u_{c}-\epsilon),\ell_{1}(u_{c}+\epsilon)])\cap W_{-}\,.
\]
and define $B_{\epsilon}(\bar{z}_{c})$ analogously but intersected
with $W_{+}$ (again if $\bar{u}_{c}=\infty$, this is empty). Let
\begin{align}\label{eq:mathcal-A-eps}
\mathcal{A}_{\epsilon}=A_{\epsilon}\cup B_{\epsilon}(z_{c})\cup B_{\epsilon}(\bar{z}_{c})\,.
\end{align}
It follows by the definitions
and the continuity of $\Phi_{L}$ and $\Phi_{U}$ that for every $\epsilon>0$, there is a $\delta>0$
such that $A_{0,\delta}\subset\mathcal{A}_{\epsilon}$.
We now turn to showing that for each $\epsilon>0$, $\cA_{\epsilon}$ is an absorbing set.

By continuity of $U$, it suffices to show that if for any $t_0\geq 0$, $U(t_{0})\in\partial\cA_{\epsilon}$
then for all $t>t_{0}$ sufficiently close to $t_{0}$, $U(t)\in\cA_{\epsilon}$.
We decompose the boundary into its constituent parts and analyze each
part. We consider the case $\bar{u}_{c}<\infty.$

As $\bar{u}_{c}<\infty$, notice that by definition, 
\begin{align*}
\partial\cA_{\epsilon} & \subset \Gamma(\gamma_{L})\cup\Gamma(\gamma_{U})\cup\partial B_{\epsilon}(z_{c})\cup\partial B_{\epsilon}(\bar{z}_{c})\,,
\end{align*}
 where $\Gamma(\gamma_{L})$ is the graph of $\gamma_{L}\left(u;\bar{z}(\eps)\vee A_{U}(\tau_{W_{+}^{c}}^{U};z(\eps))\right)$
and $\Gamma(\gamma_{U})$ is the graph of $\gamma_{U}(u;z(\eps))$. 

First, suppose that $U(t_{0})\in\Gamma(\gamma_{U})$. Since this is
in the region $W_{+}$, 
\[
\gamma(w;U(t_{0}))\leq\gamma_{U}(w;U(t_{0}))\,,
\]
for $w> u$ sufficiently close to $u$ by \prettyref{cor:graph-cor}.
For such $w$, $\gamma_{U}(w;z(\eps))=\gamma_{U}(w;U(t_{0}))$ as these are integral curves, so it follows from \prettyref{cor:graph-cor} that for $t$ sufficiently close to $t_{0}$, $U(t)\in\cA_{\epsilon}$
as desired. The case when $U(t_{0})\in\Gamma(\gamma_{L})$ of course
follows analogously.

Now suppose that $U(t_{0})\in\partial B_{\epsilon}(z_{c}).$ Observe
that if $\hat{n}_{z}$ denotes the outward pointing normal at $z\in\partial B_{\epsilon}(z_{c})$,
then for $z$ in the right face and the bottom face we have that
$\dot{U}(t_{0})\cdot\hat{n}_{U(t_{0})}<0$, which yields the
desired statement. The remaining face is in the interior of $\cA_{\epsilon}$
so we can ignore this case. The case when $U(t_{0})\in\partial B_{\epsilon}(\bar{z}_{c})$
also follows analogously.

In the situation when $\bar{u}_{c}=\infty$, $\Gamma(\gamma_{L})$ is replaced
by $\Gamma(f_{L})=\{(u,v):u>u_{c},v=f_{L}(u)\}$ and the corresponding
estimate follows by recalling from the proof of item~\eqref{item:A1} of Theorem~\ref{thm:full-phase-portrait} that
if $(x_{0},y_{0})\in\Gamma(f_{L})$, then $A_{L}(t)\in A_{0}\cup A_{1}$
for all sufficiently small $t$. The desired then again follows by
\prettyref{cor:graph-cor}.
\end{proof}

\appendix
\section{Proof of Lemma~\ref{lem:diff-geo-fact}}\label{app:diff-geo}
In this section, we provide a proof of Lemma~\ref{lem:diff-geo-fact} which we had deferred.  
\begin{proof}[\textbf{\emph{Proof of Lemma~\ref{lem:diff-geo-fact}}}]
The proof is by induction. As a base case, observe that 
\begin{align*}
\nabla^{2}f(X_{1},X_{2}) & =\hat{\nabla}^{2}f(X_{1},X_{2})-\frac{1}{\sqrt{N}}\hat{\nabla}f(P)\left\langle X_{1},X_{2}\right\rangle \\
 & =\hat{\nabla}^{2}f(X_{1},X_{2})+\frac{1}{\sqrt{N}}\sum_{\sigma\in S_{3}}c_{1,2}(\sigma)\hat{\nabla}f(W_{\sigma(1)}^{1,2})\left\langle W_{\sigma(2)}^{1,2},W_{\sigma(3)}^{1,2}\right\rangle\,,
\end{align*}
for some suitably defined $c_{1,2}$. Suppose now that~\eqref{eq:kth-deriv} holds for some $k\geq 2$. Then, by the inductive hypothesis, to compute $\nabla^{k+1}f$
it suffices to compute $\nabla_{Y}A_{\ell,k}(\sigma)$ where $Y=X_{k+1}$.

First notice that $A_{\ell,k}$ is a product of the form
\[
\frac{1}{N^{(k-\ell)/2}}\hat{\nabla}^{\ell}f\tensor Id\tensor\ldots\tensor Id\,.
\]
As the covariant
derivative of the metric tensor is zero, we see by the product rule,
it suffices to compute $V=\nabla_{Y}\hat{\nabla}^{\ell}f(W_{\sigma(1)}^{\ell,k},\ldots,W_{\sigma(\ell)}^{\ell,k}).$
From now on we suppress the dependence of $W$ on $k,\ell$ for readability.
Let $I=\left\{ i:\sigma(i)\leq k\right\} $. Then $V$ is the covariant
derivative of an $\abs{I}-$form. In particular, 
\begin{align*}
V & =Y\hat{\nabla}^{\ell}f(W_{\sigma(1)},\ldots,W_{\sigma(\ell)})-\sum_{s\in I}\hat{\nabla}^{\ell}f(W_{\sigma(1)},\ldots,\nabla_{Y}W_{\sigma(s)},\ldots,W_{\sigma(\ell)})\\
 & =\hat{\nabla}^{\ell+1}f(W_{\sigma(1)},\ldots,W_{\sigma(\ell)},Y)+\sum_{s\in I}\hat{\nabla}^{\ell}f(W_{\sigma(1)},\ldots,II(Y,W_{\sigma(s)}),\ldots,W_{\sigma(\ell)})\\
 & \qquad+\sum_{s\in I^{c}}\hat{\nabla}^{\ell}f(W_{\sigma(1)},\ldots,\hat{\nabla}_{Y}W_{\sigma(s)},\ldots W_{\sigma(\ell)})\,.
\end{align*}
where the second equality comes from the definition of the covariant
derivative and the Gauss formula. Since the shape operator of the sphere is multiplication
by $(-1/\sqrt{N})$, we see for $s\in I^{c}$, 
\[
\hat{\nabla}_{Y}W_{\sigma(s)}=\hat{\nabla}_{Y}P=\frac{1}{\sqrt{N}}Y=\frac{1}{\sqrt{N}}Y\left\langle P,P\right\rangle .
\]
by the Weingarten equation.
Plugging this, $Y=X_{k+1}$, and \eqref{eq:second-fund-form} in the above, we see that 
\[
V=\hat{\nabla}^{\ell+1}f(W_{\rho(1)}^{\ell+1,k+1},\ldots,W_{\rho(\ell+1)}^{\ell+1,k+1})+\frac{1}{\sqrt{N}}\sum_{\rho'\in E}\hat{\nabla}^{\ell}f(W_{\rho'(1)}^{\ell,k+1},\ldots,W_{\rho'(\ell)}^{\ell,k+1})\cdot\left\langle W_{\rho'(\ell+1)}^{\ell,k+1},W_{\rho'(\ell+1)}^{\ell,k+1}\right\rangle ,
\]
where $\rho\in Sym_{2(k+1)-(\ell+1)}$ is such that $\rho(\ell+1)=k+1$ and
$E\subset Sym_{2(k+1)-\ell}$.
Observe that in the first term, we have increased the indicies $k$
and $\ell$ and in the second term we have gained a factor of $N^{-1/2}$
and an inner-product. Thus 
\[
\nabla_{X_{k+1}}A_{\ell,k}(\sigma)=A_{\ell+1,k+1}(\rho)+\sum_{\rho'\in E}A_{\ell,k+1}(\rho')\,,
\]
for some $\rho$ and where the sum is again over some subset of $Sym_{2(k+1)-\ell}.$
Note that $\rho$ and this subset $E$ depend
only on $\sigma,\ell,$ and $k$. 
Thus, by the inductive hypothesis, 
\begin{align*}
\nabla^{k+1}f(X_{1},\ldots,X_{k+1}) & =(\nabla_{X_{k+1}}\nabla^{k}f)(X_{1},\ldots,X_{k})=\sum_{\ell=1}^{k}\sum_{\sigma\in Sym_{2k-\ell}}c_{\ell,k}(\sigma)\nabla_{X_{k+1}}A_{\ell,k}(\sigma)\\
 & =\sum_{\ell=1}^{k}\sum_{\sigma\in Sym_{2k-\ell}}c_{\ell,k}(\sigma)\left(A_{\ell+1,k+1}(\rho(\sigma))+\sum_{\rho'\in E(\sigma)}A_{\ell,k+1}(\rho')\right)\\
 & =\sum_{\ell=1}^{k+1}\sum_{\sigma\in Sym_{2(k+1)-\ell}}c_{\ell,k+1}(\sigma)A_{\ell,k+1}(\sigma)\,,
\end{align*}
for some $c_{\ell,k+1}$, concluding the proof.
\end{proof}

\bibliographystyle{plain}
\bibliography{boundingflows}

\end{document}